\theoremstyle{definition}
\newtheorem{counter}{bad bad bad}[section]
\newtheorem{lemma}[counter]{Lemma}
\newtheorem{definition}[counter]{Definition}
\newtheorem{notation}[counter]{Notation}
\newtheorem{remark}[counter]{Remark}
\newtheorem{proposition}[counter]{Proposition}
\newtheorem{fact}[counter]{Fact}
\newtheorem{corollary}[counter]{Corollary}
\newtheorem{hypothesis}[counter]{Hypothesis}
\newtheorem{question}[counter]{Question}
\newenvironment{customthm}[1]
  {\innercustomthm}
  {\endinnercustomthm}
\newcommand{\K}{\mathbf{K}}
\newcommand{\gS}{\operatorname{gS}}
\newcommand{\LS}{\operatorname{LS}}
\newcommand{\gtp}{\mathbf{gtp}}
\newcommand{\lek}{\leq_{\K}}
\newcommand{\cof}{\text{cf}}
\newcommand{\calt}{\mathcal{T}}
\newcommand{\calc}{\mathcal{C}}
\newcommand{\lesst}{\vartriangleleft}
\newcommand{\mon}{\mathfrak{C}}
\newcommand{\splt}{\operatorname{split}}
\newcommand{\kkappalims}{K_{(\lambda, \geq \kappa)}}
\newcommand{\Kkappalims}{\K_{(\lambda, \geq \kappa)}}
\newbox\noforkbox \newdimen\forklinewidth
\noforkbox\hbox{\lower 2pt\box1\lower 2pt\box0\relax}
\def\unionstick{\mathop{\copy\noforkbox}\limits}
\def\nonfork_#1{\unionstick_{\textstyle #1}}
\newbox\doesforkbox
\doesforkbox\hbox{\lower 2pt\box1 \lower 2pt\box2\lower2pt\box0\relax}
\def\nunionstick{\mathop{\copy\doesforkbox}\limits}
\def\fork_#1{\nunionstick_{\textstyle #1}}
\newcommand{\dnf}{\unionstick}
\newcommand{\nf}{\unionstick}
\newcommand{\dnfbold}[4]{#2 \overset{#4}{\underset{#1}{\overline{\nf}}} #3}
\newcommand{\dnfb}{\overline{\dnf}}
\title{Long limit models are isomorphic assuming a splitting-like relation}
\author{Jeremy Beard}
\begin{document}

\maketitle

\begin{abstract}
	We prove the uniqueness of high cofinality limit models in stable abstract elementary classes (AECs) with amalgamation, assuming the existence of a rather weak independence relation.
	
	\begin{customthm}{\ref{main-theorem}}
		Suppose $\K$ is a $\lambda$-stable AEC, where $\LS(\K) \leq \lambda$, $\kappa < \lambda^+$ is regular, and $\K_\lambda$ satisfies the amalgamation property. Let $\K'$ is the class of all $(\lambda, \delta)$-limit models where $\cof(\delta) \geq \kappa$ (or any AC where $\K' \subseteq \K_\lambda$ contains all such $(\lambda, \delta)$-limit models when $\cof(\delta) \geq \kappa$). Suppose also that $\dnf$ is an independence relation on $\K'$ satisfying weak uniqueness, weak existence, universal continuity* in $\K_\lambda$, $(\geq \kappa)$-local character, and $(\lambda, \theta)$-weak non-forking amalgamation in some regular $\theta \in [\kappa, \lambda^+)$.
		
		Let $\delta_1, \delta_2 < \lambda^+$ be limit with $\cof(\delta_l) \geq \kappa$ for $l = 1, 2$. Then for all $M, N_1, N_2 \in \K_\lambda$, if $N_l$ is $(\lambda, \delta_l)$-limit over $M$ for $l = 1, 2$, then $N_1 \underset{M}{\cong} N_2$. 
		
		Moreover, if $K_\lambda$ also satisfies the joint embedding property, then for all $N_1, N_2 \in \K_\lambda$, if $N_l$ is $(\lambda, \delta_l)$-limit for $l = 1, 2$, then $N_1 {\cong} N_2$.
	\end{customthm}
	
	This generalises both \cite[Theorem 3.1]{bema} and \cite[Theorem 1.2]{bovan} - the former to apply to independence relations that satisfy much weaker forms of uniqueness, extension, and non-forking amalagamation, and the latter to independence relations other than $\lambda$-non-splitting. As such, this generalises all other positive isomorphism results of limit models known to the author.

\end{abstract}

\section{Introduction}

Due to important connections to Shelah's categoricity conjecture (\cite[Question 6.14(3)]{sh702}), the question of when limit models are isomorphic has historically only be posed in settings surrounding categoricity and superstability in AECs. Limit models behave similarly to saturated models, and since $<\LS(K)$ saturated models are harder to work with in AECs, they become a useful surrogate. 

We say a model $N$ is \emph{universal} over a model $M$ of the same cardinality if for every $N'$ extending $M$, there is a $\K$-embedding $f : N' \rightarrow N$ fixing $M$ (Definition \ref{universal-def}). We say $N$ is a \emph{$(\lambda, \delta)$-limit model over $M$} if $N$ has cardinality $\lambda$ and there is a universal increasing sequence $\langle M_i : i < \delta \rangle$ where $M = M_0$ and $N = \bigcup_{i < \delta} M_i$ (Definition \ref{limit-model-def}), and we call $\delta$ the \emph{length} of the limit. Under categoricity conditions, or more generally in superstable AECs with symmetry, \emph{all} limit models are isomorphic, even over a base model (the original attempt at a proof in a categorical setting in \cite{shvi99} had issues that were corrected and extended by VanDieren over the course of several papers \cite{van06} \cite{gvv} \cite{van16a} \cite{van16b}).

Recently the question of what the spectrum of limit models might look like in a strictly stable setting has been posed and explored. In \cite[Theorem 1.2]{bovan}, Boney and VanDieren showed that, in a nice enough stable AEC, all limits above the local character cardinal for non-splitting are isomorphic. They conjectured that the set of regular cardinals $\mu$ such that the $(\lambda, \mu)$-limit model ($\mu$ is the `length' of the limit) is isomorphic to the $(\lambda, \lambda)$-limit model (the `longest' limit model, if $\lambda$ is regular) might be an interval of regular cardinals, with minimal element given by the least (universal) local character cardinal \cite[Conjecture 1.1]{bovan}. Assuming a relation with stronger properties than $\lambda$-non-splitting and further conditions including $\aleph_0$-tameness, in \cite{bema} the author and Mazari-Armida answered this with a complete description of the isomorphism spectrum: in a nice $\aleph_0$-tame AEC with a well behaved independence relation, two limit models of different regular lengths are isomorphic if and only if both lengths are at least the local character cardinal. In particular, \cite[Conjecture 1.1]{bovan} is true in such an AEC if and only if $\cof(\lambda)$ is above the local character cardinal.

The approaches these papers took to showing the `long limits are isomorphic' both involve developing a theory of towers. In an abstract sense, a tower is an increasing sequence of models with a sequence of named elements, where the elements have some independence over the models coded into the structure between towers. The approach of \cite{bovan} was to use non-splitting as the independence notion, and to code into the tower common models that all towers in the sequence were independent over (so their towers are comprised of \emph{two} sequences of models and a sequence of singletons). On the other hand, the approach of \cite{bema} used towers with a \emph{single} sequence of models and a sequence of singletons, and coded the non-forking properties into the tower ordering - types do not fork over the models from smaller towers (inspired by Vasey's presentation in \cite{vasnotes} and \cite{vas19}). More importantly, the towers from \cite{bema} were compatible with \emph{arbitrary} independence relations satisfying enough nice properties, and in fact only required that the relation be defined on `long' limit models. However, some of the properties the arbitrary independence relation must satisfy are stronger than the known properties of $\lambda$-non-splitting (in particular, only weaker forms of extension and uniqueness are known to hold for $\lambda$-non-splitting), so the result from \cite{bema} cannot be directly applied to $\mu$-non-splitting in their context.

The goal of this paper is to apply an approach to towers inspired by \cite{vas19} and \cite{bema} to arbitrary independence relations with the weaker properties of non-splitting, thus extended all previous `long limit models are isomorphic' results including \cite{bovan}. We prove the following:

\begin{customthm}{\ref{main-theorem}}
    Suppose $\K$ is a $\lambda$-stable AEC, where $\LS(\K) \leq \lambda$, $\kappa < \lambda^+$ is regular, and $\K_\lambda$ satisfies the amalgamation property. Let $\K'$ is the subclass of all $(\lambda, \delta)$-limit models where $\cof(\delta) \geq \kappa$ (or any AC where $\K' \subseteq \K_\lambda$ contains all such $(\lambda, \delta)$-limit models when $\cof(\delta) \geq \kappa$). Suppose also that $\dnf$ is an independence relation on $\K'$ satisfying weak uniqueness, weak existence, universal continuity, $(\geq \kappa)$-local character, and $(\lambda, \theta)$-weak non-forking amalgamation in some regular $\theta \in [\kappa, \lambda^+)$.
    
    Let $\delta_1, \delta_2 < \lambda^+$ be limit with $\cof(\delta_l) \geq \kappa$ for $l = 1, 2$. Then for all $M, N_1, N_2 \in \K_\lambda$, if $N_l$ is $(\lambda, \delta_l)$-limit over $M$ for $l = 1, 2$, then $N_1 \underset{M}{\cong} N_2$. 
    
    Moreover, if $\K_\lambda$ also satisfies the joint embedding property, then for all $N_1, N_2 \in \K_\lambda$, if $N_l$ is $(\lambda, \delta_l)$-limit for $l = 1, 2$, then $N_1 {\cong} N_2$.
\end{customthm}

For context, $(\lambda, \theta)$-weak non-forking amalgamation is effectively a weakening of $(\lambda, \delta)$-symmetry (originally defined as \cite[Definition 2.8]{bovan}) for a general independence relation (see Lemma \ref{symmetry_implies_nfap}).

In particular, since the assumptions of \cite[Theorem 1.2]{bovan} imply our hypotheses hold for $\lambda$-non-splitting, we immediately reobtain \cite[Theorem 1.2]{bovan} as Corollary \ref{bovan-main-corollary}. We also reobtain \cite[Theorem 3.1]{bema}, since it's hypotheses are identical to Theorem \ref{main-theorem} besides the weaker versions of uniqueness, extension, and non-forking amalgamation (see Corollary \ref{bema-weaker-corollary}).

The new setting presents several challenges, the principal of which is having access only to the weaker forms of uniqueness, extension, transitivity, and non-forking amalgamation. This creates roadblocks in many arguments throughout the proof, especially the tower extension lemmas, the preservation of fullness of towers under long unions, and perhaps most of all finding reduced extensions. These `weaker' properties can only be applied when there is sufficient `universal wiggle room' - that is, there often needs to be a universal extension separating the domain of a type and the model it is independent of. To accommodate for this, we introduce extra `universal wiggle room' between the singletons and the models they are independent of in our version of towers. More precisely, rather than having $a_i \in M_{i+1}$ and that $\gtp(a_i/M_i', M_{i+1}')$ $\dnf$-does not fork over $M_i$ in the definition of tower extension as in \cite[3.19]{bema}, we have $a_i \in M_{i+2}$ and that $\gtp(a_i/M_{i+1}', M_{i+2}')$ $\dnf$-does not fork over $M_i$. While this complicates the picture somewhat, with some heavy tweaks many arguments using the broad strokes of \cite{bema} go through.

There is another serious hurdle in this setting - we can only show that the tower ordering $\lesst$ is transitive on \emph{universal} towers (more precisely, the middle tower must be universal). This is problematic, as if $\langle \calt^j : j < \alpha + 1\rangle$ is a $\lesst$-increasing sequence of towers, $\calt^\alpha$ is not universal, and we find $\calt'$ where $\calt^\alpha \lesst \calt'$, it may be possible that $\calt^j \not\lesst \calt'$ for some $j < \alpha$. To get around this, instead of looking for extensions of single towers, we often have to ask when a \emph{$\lesst$-chain} of towers may be extended. We call the nicely behaved $\lesst$-chains that allow this \emph{brilliant} - these are chains of towers respecting the tower ordering where each tower is either universal or a union of the previous cofinality $\geq \kappa$-many towers in the sequence.

One consequence of this is that (because of technical reasons in the proof strategy to show reduced extensions exist), it becomes more useful to consider \emph{reduced chains} of towers, rather than reduced individual towers. These reduced extensions of brilliant chains of towers always exist, but we may have to add many towers to the chain rather than just one as in previous approaches. Again this complicates the picture, but allows something like the arguments from \cite{bema} to be used.

This paper has \ref{applications-section} Sections. Section \ref{preliminaries-section} goes through the necessary background. Section \ref{main-result-section} gives a proof of the main theorem; this is the brunt of the paper, so see the end of Subsection \ref{proof-outline-subsection} for an overview of it's structure (Subsection \ref{subsection-non-alg-jep-nmm} is not essential to understand our main arguments, but discusses the strength of our assumptions). Section \ref{applications-section} shows how the result generalises all results on limit models currently known to the author.

This paper was written while the author was working on a Ph.D. thesis under the direction of Rami Grossberg at Carnegie Mellon University, and the author would like to thank Professor Grossberg for his guidance and assistance in his research in general and in this work specifically. We would also like to thank Jin Li for helpful comments that improved the presentation of this paper.

\section{Preliminaries}\label{preliminaries-section}

We assume some knowledge of the basics of abstract elementary classes (AECs), such as found in \cite{baldwinbook}.

\subsection{Notation}

We use $\K = (K, \lek)$ to denote an abstract elementary class (AEC), or occasionally an abstract class (AC); that is, a class of models closed under isomorphisms, where $\lek$ is a partial ordering respecting isomorphisms and refining the submodel relation. A sub-AC $\K' = (K', \leq_{\K'})$ of $\K$ is an AC where $K' \subseteq K$ and $\leq_{\K'} = \lek \upharpoonright (K')^2$; we write $\K' \subseteq \K$. $\LS(\K)$ denotes the L\"{o}wenheim Skolem cardinal of $\K$. We use $M, N, L$ for models, $|M|$ for the universe of $M$, and $\|M\|$ for it's cardinality. $N_1 \cong N_2$ denotes that an isomorphism from $N_1$ to $N_2$ exists, and $N_1 \underset{M}{\cong} N_2$ that one exists fixing $M$. $a, b, c, d$ will be used for singletons in models. $\gtp(a/M, N)$ is the Galois (orbital) type of $a$ over $M$ in $N$. When $p \in S(M)$ and $q \in S(N)$ with $M \lek N$, we say $p$ \emph{extends} $q$, notated by $p \subseteq q$, if there exist $N' \in \K$ and $a \in N'$ where $q = \gtp(a/N, N')$ and $p = \gtp(a/M, N')$. We use $\gS(M)$ to denote the set of all Galois types over $M$. $\alpha, \beta, \gamma, \delta, \theta, \xi, \sigma, \kappa$ will be ordinals, $\gamma, \delta$ typically for limit ordinals, $\theta, \kappa$ typically regular cardinals. $\kappa$ specifically will normally be a local character cardinal (this mirrors Shelah's $\kappa_r(T)$, as well as \cite[2.1]{bovan} and \cite[2.4]{vas18}).

If $\K$ is an AC and $\lambda$ a cardinal, we use the notation $\K_\lambda$ to denote the models in $\K$ of size $\lambda$. We say $\K$ is \emph{$\lambda$-stable} if for every $M \in \K_\lambda$, $|\gS(M)| \leq \lambda$. We say $\K$ has the \emph{amalgamation property} (AP) if for every $M, M_1, M_2 \in \K$ where $M \lek M_l$ for $l = 1, 2$, there exist $N \in \K$ and $\K$-embeddings $f_l : M_l \rightarrow N$ fixing $M$. $\K$ has the \emph{joint embedding property} (JEP) if for every $N_1, N_2 \in \K$, there is some $M$ which $\K$-embeds into both. $\K$ has \emph{no maximal models} (NMM) if for every $M \in \K$ there is $N \in \K$ where $M \lek N$ and $M \neq N$. We say $\K$ has the \emph{$\lambda$-amalgamation property} ($\lambda$-AP), the \emph{$\lambda$-joint embedding property} ($\lambda$-JEP), and \emph{$\lambda$-no maximal models} ($\lambda$-NMM) if $\K_\lambda$ satisfies the relevant property.

\subsection{Limit models}

Limit models are a kind of replacement for saturated models, and can be useful in cases where saturation makes less sense (the notion of $\leq \LS(\K)$-saturated models can be harder to work with as there are no models of size $<\LS(\K)$, but this is not a problem for limit models). 

\begin{definition}\label{universal-def}
    Let $\K$ be an AEC, $\lambda \geq \LS(\K)$, and $M, N \in \K_\lambda$. We say $N$ is \emph{universal over $M$} if for every $N' \in \K_\lambda$ where $M \lek N'$, there exists a $\K$-embedding $g : N' \rightarrow N$ fixing $M$. We use the shorthand $M \lek^u N$.
\end{definition}

\begin{definition}\label{limit-model-def}
    \begin{enumerate}
        \item Let $\K$ be an AEC, $M, N \in \K_\lambda$, and $\delta < \lambda^+$ a limit ordinal. We say $N$ is a \emph{$(\lambda, \delta)$-limit model over $M$} if there is a $\lek^u$-increasing sequence $\langle M_i : i \leq \delta\rangle$ such that $M_0 = M$ and $N = M_\delta = \bigcup_{i < \delta} M_i$. We say $\langle M_i : i < \delta \rangle$ \emph{witnesses} the limit.

        \item $N$ is a \emph{$(\lambda, \delta)$-limit model} if $N$ is a $(\lambda, \delta)$-limit model over some $M \in \K_\lambda$. Similarly define \emph{$\lambda$-limit model}. We may omit $\lambda$ when clear from context.

        \item Given $\kappa < \lambda^+$ regular (and $M \in \K_\lambda$), $N$ is a $(\lambda, \geq\kappa)$-limit model (over $M$) if it is a $(\lambda, \delta)$-limit model (over $M$) for some $\delta < \lambda^+$ where $\cof(\delta) \geq \kappa$.
    
        \item Denote the class of all $(\lambda, \geq \kappa)$-limit models by $\kkappalims$, and the AC $\Kkappalims = (\kkappalims, \lek \upharpoonright (\kkappalims)^2)$.
    \end{enumerate}
    
\end{definition}

We will make use of the following fact frequently without explicit mention.

\begin{fact}[Existence of limit models, {\cite[Claim 2.9]{grva06a}}]\label{limit-models-exist}
    Let $\K$ be an AEC stable in $\lambda \geq \LS(\K)$, where $\K_\lambda$ as AP. Then for all $M \in \K_\lambda$, there exists $N \in \K_\lambda$ universal over $M$. Therefore, for all $M \in \K_\lambda$ and limit $\delta < \lambda^+$, there exists $N \in \K_\lambda$ a $(\lambda, \delta)$-limit model over $M$.
\end{fact}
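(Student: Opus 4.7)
The plan is to first prove that $M$ has a universal extension in $\K_\lambda$, and then derive the $(\lambda,\delta)$-limit model existence by iterating. For the universal extension, fix $M \in \K_\lambda$ and construct a $\lek$-increasing continuous chain $\langle M_i : i \leq \lambda\rangle$ in $\K_\lambda$ with $M_0 = M$, and the key property that every $p \in \gS(M_i)$ is realized in $M_{i+1}$. This is feasible because $\lambda$-stability bounds $|\gS(M_i)| \leq \lambda$, while $\lambda$-AP lets us amalgamate, one at a time along an auxiliary length-$\lambda$ chain, single-element realizers of each type in $\gS(M_i)$ into a common $\K_\lambda$-extension. The candidate universal model is $N := M_\lambda = \bigcup_{i<\lambda} M_i$, which lies in $\K_\lambda$ by the AEC coherence and union axioms together with the bound $\|N\| \leq \lambda \cdot \lambda = \lambda$.

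To verify universality of $N$ over $M$, fix $N' \in \K_\lambda$ with $M \lek N'$, enumerate $N' = \{c_\alpha : \alpha < \lambda\}$, and take a continuous $\lek$-chain of L\"{o}wenheim--Skolem closures $N'_\alpha \lek N'$ with $N'_0 = M$, $c_\alpha \in N'_{\alpha+1}$, $\|N'_\alpha\| \leq \LS(\K) + |\alpha|$, so that $N' = \bigcup_{\alpha<\lambda} N'_\alpha$. I will then construct a coherent system of $\K$-embeddings $f_\alpha : N'_\alpha \to N$ fixing $M$, maintaining the invariant that $f_\alpha[N'_\alpha] \lek M_{g(\alpha)}$ for some strictly-increasing $g:\lambda\to\lambda$. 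The crux is the successor step: to extend $f_\alpha$ to $f_{\alpha+1}$, process the (few) elements of $N'_{\alpha+1}\setminus N'_\alpha$ one at a time, each time transporting the Galois type of the new element over $N'_\alpha$ along $f_\alpha$ to a type over $f_\alpha[N'_\alpha]$, extending via $\lambda$-AP to a type in $\gS(M_{g(\alpha)})$, and invoking the defining realization property to find the needed realizer inside $M_{g(\alpha)+1} \lek N$. Limits are handled by taking unions of embeddings, and $f := \bigcup_\alpha f_\alpha : N' \to N$ witnesses universality.

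The $(\lambda,\delta)$-limit statement follows by a straightforward iteration: given $M \in \K_\lambda$ and limit $\delta < \lambda^+$, build a $\lek^u$-increasing continuous chain $\langle M_i : i \leq \delta\rangle$ with $M_0 = M$, $M_{i+1}$ a universal extension of $M_i$ from the first part, and $M_\gamma := \bigcup_{i<\gamma} M_i$ at limits $\gamma\leq\delta$ (which remains in $\K_\lambda$ since $\gamma \leq \delta < \lambda^+$ and each $M_i\in\K_\lambda$). Then $M_\delta$ is a $(\lambda,\delta)$-limit model over $M$ by definition. The principal technical obstacle lies in the universality argument of the second paragraph: one must carefully choreograph the interplay between the enumeration of $N'$, the growth of $g$, and the extension of $f_\alpha$ so that the invariant $f_\alpha[N'_\alpha] \lek M_{g(\alpha)}$ persists and the Galois-type realization property of $N$ remains applicable at every successor, especially since extending an embedding by adding one element requires first coherently extending the source along its L\"{o}wenheim--Skolem closure.
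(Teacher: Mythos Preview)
The paper does not give its own proof of this statement; it is recorded as a Fact with a citation to the literature. So there is no paper proof to compare against, and your task was really to reconstruct the standard argument. Your overall architecture---build a type-realizing chain of length $\lambda$, then verify universality of the union, then iterate for limit models---is the right one, and the limit-model iteration in your third paragraph is fine.

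The universality verification, however, has a genuine gap. You write $N'_0 = M$ and simultaneously $\|N'_\alpha\| \leq \LS(\K) + |\alpha|$; since $\|M\| = \lambda$, these are incompatible (unless $\lambda = \LS(\K)$, but even then every $N'_\alpha$ already has size $\lambda$). More seriously, your successor step relies on there being ``few'' elements in $N'_{\alpha+1}\setminus N'_\alpha$ to process one at a time, each consuming one level of the chain $\langle M_i\rangle$. Without the (false) size bound, $|N'_{\alpha+1}\setminus N'_\alpha|$ can be $\lambda$, so a single successor step would already exhaust the chain, and the bookkeeping function $g$ cannot stay below $\lambda$.

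The standard fix reverses the direction: rather than pushing pieces of $N'$ into $N$, you pull isomorphic copies of the $M_i$ into a growing ambient model above $N'$. Concretely, fix an enumeration $N' = \{a_i : i < \lambda\}$ and build, inside an increasing chain $\langle N^*_i\rangle$ in $\K_\lambda$ with $N' \lek N^*_i$, isomorphisms $g_i : M_i \cong M_i' \lek N^*_i$ with $g_0 = \operatorname{id}_M$ and $a_i \in M'_{i+1}$. The successor step uses exactly one type realization: transport $\gtp(a_i/M_i', N^*_i)$ back along $g_i$ to a type in $\gS(M_i)$, realize it in $M_{i+1}$ by some $b$, and use $\lambda$-AP (via Galois-type equality) to extend $g_i$ to $g_{i+1}:M_{i+1}\to N^*_{i+1}$ with $g_{i+1}(b)=a_i$. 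At the end, $|N'|\subseteq |M'_\lambda|$, coherence gives $N'\lek M'_\lambda$, and $g_\lambda^{-1}\upharpoonright N'$ is the desired embedding into $N$ over $M$. This avoids your cardinality bottleneck entirely: each successor costs exactly one level, regardless of how large the models are.
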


The following holds by a straightforward back and forth argument. A proof when the limits do not share a base model can be found in \cite[2.7]{beard25}\footnote{Though the statement of \cite[2.7]{beard25} assumes $\K$ is an elementary class, the proof works also in our setting.} - when you want to fix the base model, take $M_0 = N_0 = M^*$ to be the base model and $f^* = \operatorname{id}_{M_0}$ in the construction.

\begin{fact}[{\cite[Fact 1.3.6]{shvi99}}]\label{cofinality-isomorphisms}
    Let $\K$ be a an AEC, stable in $\lambda \geq \LS(\K)$ where $\K_\lambda$ has AP, and $\delta_1, \delta_2 < \lambda^+$ limit ordinals where $\cof(\delta_1) = \cof(\delta_2)$. Let $M, N_1, N_2 \in \K_\lambda$ where $N_l$ is a $(\lambda, \delta_l)$-limit model over $M$ for $l = 1, 2$. Then $N_1 \underset{M}{\cong} N_2$.

    Moreover, if $\K_\lambda$ has JEP, $N_1, N_2 \in \K_\lambda$, and $N_l$ is a $(\lambda, \delta_l)$-limit model for $l = 1, 2$ (possibly over different models), then $N_1 \cong N_2$.
\end{fact}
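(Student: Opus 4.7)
The plan is a back-and-forth of length $\theta := \cof(\delta_1) = \cof(\delta_2)$, realising the isomorphism as the union of a chain of partial isomorphisms fixing $M$.

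First I would reduce to the case $\delta_1 = \delta_2 = \theta$. Fix witnessing chains $\langle A_i^l : i \leq \delta_l\rangle$ and cofinal continuous increasing maps $\sigma_l : \theta \to \delta_l$ with $\sigma_l(0) = 0$. The subsequence $\langle A_{\sigma_l(j)}^l : j \leq \theta\rangle$ is still $\lek^u$-increasing continuous: continuity is immediate from continuity of $\sigma_l$ and of the original chain, and universality at a successor $j{+}1$ follows from the easy observation that $M_0 \lek^u M_1 \lek M_2$ implies $M_0 \lek^u M_2$ (any extension of $M_0$ embeds into $M_1$ over $M_0$, and hence into $M_2$). So without loss of generality each $N_l$ is a $(\lambda, \theta)$-limit over $M$ via $\langle A_i^l : i \leq \theta\rangle$ with $A_0^l = M$.

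Next I would build by induction on $i \leq \theta$ a $\lek$-increasing continuous chain of partial isomorphisms $f_i : C_i \to D_i$ fixing $M$, with $C_i \lek N_1$ and $D_i \lek N_2$ in $\K_\lambda$, arranged so that $A_k^1 \lek C_{2k+1}$ and $A_k^2 \lek D_{2k+2}$ at the appropriate successor steps, and unions are taken at limits. The core back-and-forth step is the following: given $f_i : C_i \to D_i$ with $C_i \lek A_k^1 \lek N_1$, use $\lambda$-AP to amalgamate $A_k^1$ with $D_i$ over $C_i$ via $f_i$; then choose some $A_{j'}^2$ in the $N_2$-chain with $D_i \lek A_{j'}^2$ and invoke universality of $A_{j'+1}^2$ over $A_{j'}^2$ to realise the amalgam inside $N_2$. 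The restriction of the resulting embedding to (the image of) $A_k^1$ gives the extension $f_{i+1}$ with $A_k^1 \lek C_{i+1}$, and the odd step is symmetric.

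The cofinal interleaving forces $\bigcup_{i<\theta} C_i = N_1$ and $\bigcup_{i<\theta} D_i = N_2$, so $\bigcup_{i<\theta} f_i$ is the desired isomorphism fixing $M$. The principal obstacle is the bookkeeping to exhaust both chains in step with $\theta$, but this is routine. For the moreover, apply JEP to the base models $A_0^1$ and $A_0^2$ to obtain a common amalgam $M^* \in \K_\lambda$; universality of $A_1^l$ over $A_0^l$ embeds $M^*$ over each base, and the resulting images $M^{*,l} \lek A_1^l$ are isomorphic to $M^*$. Viewing each $N_l$ as a $(\lambda, \delta_l)$-limit over $M^{*,l}$ (by dropping the first term of each witnessing chain, which does not change the cofinality) and pushing $N_2$ along the induced isomorphism $M^{*,1} \cong M^{*,2}$ reduces to the first part and yields $N_1 \cong N_2$.
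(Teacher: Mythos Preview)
Your proposal is correct and matches the paper's approach: the paper does not spell out a proof but simply notes this is ``a straightforward back and forth argument'' (referencing \cite{shvi99} and \cite{beard25}), and your sketch is exactly such an argument, including the reduction to common cofinal length and the JEP trick for the moreover clause. One small inaccuracy worth flagging: the witnessing chain in Definition~\ref{limit-model-def} is only required to be $\lek^u$-increasing with union $N$ at the top, not continuous at intermediate limits, so your claim that the subsequence $\langle A^l_{\sigma_l(j)} : j \leq \theta\rangle$ is continuous need not hold; this is harmless, however, since the reduction only needs that the subsequence is $\lek^u$-increasing with union $N_l$, which follows from cofinality of $\sigma_l$.
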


\subsection{Independence relations}

Now we introduce some background of independence relations. These are abstract relations that satisfy some of the useful properties of first order non-forking. Since such relations don't necessarily exist in stable AECs, we assume the existence of an independence relation for our main theorem. These relations exist in many contexts beyond first order, such as $\lambda$-non-splitting (Definition \ref{non-splitting-def}) in the setting of \cite{bovan}, $\lambda$-non-forking in categorical (or more generally $\lambda$-superstable and $\lambda$-symmetric) AECs \cite{vas17unique}, and a host of algebraic examples \cite[6.2, 6.13]{bema}, \cite[3.2]{maro25}.

There are many nice properties we may ask of independence notions, normally some analogue of known properties of forking in (super)stable theories. Many were isolated by Shelah, who first observed that these properties were perhaps more important than the actual definition of forking in the first order setting, and first formalised many of them, in \cite[III]{sh:c}. Other characterisations of non-forking properties with subtle variations, often improving, expanding, or translating to different contexts, appear in \cite{shma285} \cite{sh300}, \cite{shya164}, \cite{bogr}, \cite{van06}, and \cite{lrv}, to name a few. The most intensive collection of such properties might be Shelah's notion of \emph{good frames} \cite{sh:h}, though in more recent years interest has been shown in many weaker frameworks \cite{maz20}, \cite{yang24}, \cite{mvy24}. We will use the same presentation as \cite{bema}, which is largely based on these previous works (see Definition \ref{def-dnf-properties}).

\begin{definition}
    Let $\K$ be an AC. Let $\dnf$ be a relation on tuples $(M_0, a, M, N)$ where $M_0 \lek M \lek N$ and $a \in N$. We write $a \dnf_{M_0}^N M$ as a shorthand for $\dnf(M_0, a, M, N)$. 
    
    We say that $\dnf$ is an \emph{independence relation on $\K$} if $\dnf$ satisfies
    \begin{enumerate}
        \item \emph{Invariance:} if $a \dnf_{M_0}^N M$, and $f : N \rightarrow N'$ is a $\K$-embedding, then $f(a) \dnf_{f[M_0]}^{f[N]} f[M]$ (that is, $\dnf$ is well defined up to isomorphism).
        \item \emph{Monotonicity} if $a \dnf_{M_0}^N M$, $M_0 \lek M_1 \lek M \lek N_0 \lek N \lek N'$, and $a \in N_0$ , then $a \dnf_{M_0}^N M_1$, $a \dnf_{M_0}^{N_0} M$, and $a \dnf_{M_0}^{N'} M$ (we can shrink $M$ and either shrink or grow $N$, so long as we maintain $M_0 \lek M \lek N$ and $a \in N$).
        \item \emph{Base monotonicity} if $a \dnf_{M_0}^{N_0} M$ and $M_0 \lek M_1 \lek M$, then $a \dnf_{M_1}^{N_0} M$ (that is, we can grow $M_0$ so long as is stays below $M$).
    \end{enumerate}
\end{definition}

\begin{definition}
    Let $\dnf$ be an independence relation on an AC $\K$. Suppose $M_0 \lek M$ and $p \in \gS(M)$. We say \emph{$p$ $\dnf$-does not fork over $M_0$} if and only if there exists $N \in \K$ and $a \in N$ such that $p = \gtp(a/M, N)$ and $a \dnf_{M_0}^N M$.
\end{definition}

\begin{remark}
    If $\dnf$ is an independence relation on an AC $\K$, then by invariance and monotonicity, $p$ $\dnf$-does not fork over $M_0$ is equivalent to saying: \emph{for all} $N \in \K$ and $a \in N$ such that $p = \gtp(a/M, N)$, $a \dnf_{M_0}^N M$.
\end{remark}

Independence relations become very useful when we start to add some of the following criteria. We choose to specify explicitly which of these our relations satisfy rather than defining a specific `frame' notion to use throughout the paper, as is often done, since this allows us greater flexibility in how we compare independence relations and how the criteria interact. Some we will not use for a while or are mainly interesting as comparisons to the assumptions in Theorem \ref{main-theorem}, but we collect them all here for convenience.

\begin{definition}\label{def-dnf-properties}
    Let $\dnf$ be an independence relation. Let $\kappa$ be a regular cardinal. Then we say that $\dnf$ satisfies
    \begin{enumerate}
        \item \emph{Weak uniqueness} if for all $M_0 \lek^u M \lek N$ and all $p, q \in \gS(N)$, if $p, q$ both $\dnf$-do not fork over $M_0$ and $p \upharpoonright M = q \upharpoonright M$, then $p = q$.
        \item \emph{Weak extension} if for all $M_0 \lek^u M \lek N$ and all $p \in \gS(M)$, if $p$ $\dnf$-does not fork over $M_0$, then there exists $q \in \gS(N)$ extending $p$ such that $q$ $\dnf$-does not fork over $M_0$.
        \item \emph{Weak transitivity} if for all $M_0 \lek M_1 \lek^u M_2 \lek M_3$ and all $p \in \gS(M_3)$, if $p$ $\dnf$-does not fork over $M_1$ and $p \upharpoonright M_2$ $\dnf$-does not fork over $M_0$, then $p$ $\dnf$-does not fork over $M_0$.
        \item \emph{Uniqueness} if whenever $M \lek N$, $q_1, q_2 \in \gS(N)$ both $\dnf$-do not fork over $M$, and $q_1 \upharpoonright M = q_2 \upharpoonright M$, then $q_1 = q_2$.
        \item \emph{Extension} if whenever $M_0 \lek M \lek N$ and $p \in \gS(M)$ $\dnf$-does not fork over $M_0$, then there is $q \in \gS(N)$ extending $p$ where $q$ $\dnf$-does not fork over $M_0$.
        \item \emph{Transitivity} if whenever $M_0 \lek M \lek N$ and $p \in \gS(N)$ where $p \upharpoonright M$ $\dnf$-does not fork over $M_0$ and $p$ $\dnf$-does not fork over $M$, then $p$ $\dnf$-does not fork over $M_0$.
        \item \emph{$(\geq \kappa)$-universal continuity} if for all $\delta$ limit with $\cof(\delta) \geq \kappa$, all $\lek^u$-increasing sequences of models $\langle M_i : i < \delta \rangle$ where $\bigcup_{i < \delta} M_i \in \K$, for all $p \in \gS(\bigcup_{i<\delta} M_i)$, if $p \upharpoonright M_i$ $\dnf$-does not fork over $M_0$ for all $i \in [1, \delta)$, then $p$ $\dnf$-does not fork over $M_0$.
        \item \emph{Universal continuity} if $\dnf$ satisfies $(\geq \aleph_0)$-universal continuity.
        \item \emph{$(\geq \kappa)$-local character} if for all $\delta$ limit, all $\lek^u$-increasing sequences of models $\langle M_i : i < \delta \rangle$ where $\bigcup_{i < \delta} M_i \in \K$ and $\cof(\delta) \geq \kappa$, then for all $p \in \gS(\bigcup_{i < \delta} M_i)$ there exists $i < \delta$ such that $p$ $\dnf$-does not fork over $M_i$.
        \item \emph{Weak disjointness} if whenever $M_0 \lek^u M \lek N$, $p \in \gS(N)$, $p$ $\dnf$-does not fork over $M_0$, and $p \upharpoonright M$ is non-algebraic, then $p$ is non-algebraic.
        \item \emph{Disjointness} if whenever $M \lek N$, $p \in \gS(N)$, $p$ $\dnf$-does not fork over $M$, and $p \upharpoonright M$ is non-algebraic, then $p$ is non-algebraic.
        \item\label{def-nfap} \emph{Non-forking amalgamation} if whenever $M, M_1, M_2 \in \K$ where $M \lek M_l$ and $a_l \in M_l$ such that $\gtp(a_l/M, M_l)$ $\dnf$-does not fork over $M$ for $l = 1, 2$, then there exist $N \in \K$ and $\K$-embeddings $f_l : M_l \rightarrow N$ fixing $M$ for $l = 1, 2$ such that $\gtp(f_l(a_l)/f_{3-l}[M_{3-l}], N)$ $\dnf$-does not fork over $M$ for $l = 1, 2$.
    \end{enumerate}
\end{definition}

For most of this paper, we will be using the `weak' forms of the above properties. We include the others here so we can compare our main hypotheses with other results (especially in Section \ref{applications-section}). The classic example of an independence relation only known to satisfy these weak properties is $\lambda$-non-splitting.

\begin{definition}[{\cite[3.2]{sh394}}]\label{non-splitting-def}
	Suppose $\K$ is an AEC, and $\lambda \geq \LS(\K)$.
	
	Given $M\in \K_\lambda, N \in \K$ where $M \lek N$ and $p \in \gS(N)$, we say $p$ $\lambda$-splits over $M$ if there exist $N_1, N_2 \in \K_\lambda$ where $M \lek N_1, N_2$ and an isomorphism $f:N_1 \rightarrow N_2$ fixing $M$ such that $f(p \upharpoonright N_1) \neq p \upharpoonright N_2$. Otherwise, we say \emph{$p$ $\lambda$-does not split over $M$}.
	
	The relation $\dnf_{\lambda-\splt}$ is defined by $a \dnf_{\underset{{M_0}}{\lambda-\splt}}^N M$ if and only if $\gtp(a/M, N)$ $\lambda$-does not split over $M_0$.
\end{definition}

Note the independence relation corresponds to $\lambda$-\emph{non}-splitting, rather than $\lambda$-splitting. Like with first order forking, the negation is what we normally interpret as independence.

The following is stated in \cite{van02}, with proof explained in \cite[Fact 3.5]{vas17satfork}.

\begin{fact}\label{non-splitting-properties}
	Suppose $\K$ is an AEC stable in some $\lambda \geq \LS(\K)$. Then $\dnf_{\lambda-\splt}$ is an independence relation satisfying weak extension and weak uniqueness.
\end{fact}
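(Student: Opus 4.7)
The plan is to verify each condition directly from the definition of $\lambda$-splitting, invoking stability only for extension. The three independence-relation axioms are bookkeeping: invariance is immediate since $\lambda$-splitting is phrased entirely in terms of $\K$-isomorphisms and Galois types; monotonicity holds because shrinking $M$ to $M_1$ only restricts the candidate splitting witnesses (any $N_1, N_2 \lek M_1$ also lie below $M$), while changing the ambient $N$ leaves $\gtp(a/M, \cdot)$ unchanged on submodels; and base monotonicity holds because an isomorphism fixing $M_1 \supseteq M_0$ automatically fixes $M_0$.

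For weak uniqueness, fix $M_0 \lek^u M \lek N$ with $p, q \in \gS(N)$ both $\lambda$-non-splitting over $M_0$ and $p \upharpoonright M = q \upharpoonright M$. It suffices to show $p \upharpoonright N' = q \upharpoonright N'$ for every $N' \in \K_\lambda$ with $M_0 \lek N' \lek N$. Universality of $M$ over $M_0$ yields a $\K$-embedding $g : N' \to M$ fixing $M_0$; then $g[N'] \lek M \lek N$ and $g : N' \to g[N']$ is an isomorphism fixing $M_0$. Non-splitting of $p$ over $M_0$ applied to this pair of isomorphic submodels of $N$ gives $g(p \upharpoonright N') = p \upharpoonright g[N']$, and analogously for $q$. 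Since $g[N'] \lek M$, we have $p \upharpoonright g[N'] = q \upharpoonright g[N']$, so pulling back along $g^{-1}$ gives $p \upharpoonright N' = q \upharpoonright N'$. Since $N'$ was arbitrary, $p = q$.

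For weak extension, given $p \in \gS(M)$ not $\lambda$-splitting over $M_0$ and $M \lek N$ with $\|N\| = \lambda$, I would invoke $M_0 \lek^u M$ to obtain $g : N \to M$ fixing $M_0$, and then take $q \in \gS(N)$ to be the pullback of $p \upharpoonright g[N]$ along the isomorphism $g : N \to g[N]$. Stability in $\lambda$ is the input that makes this pullback well-defined as a genuine Galois type in $\gS(N)$ (it controls the type-counting needed to ensure the assignment is consistent across all size-$\lambda$ intermediate submodels) and, combined with the fact that $p$ itself does not split over $M_0$, that $q \upharpoonright M = p$. Non-splitting of $q$ over $M_0$ is inherited by transport: any isomorphism inside $N$ fixing $M_0$ witnessing splitting for $q$ would push forward via $g$ to an isomorphism in $M$ witnessing splitting for $p$ over $M_0$.

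The main technical subtlety lies in justifying identifications such as $g(p \upharpoonright N') = p \upharpoonright g[N']$ rigorously; although here $g[N'] \lek M \lek N$ makes the situation clean, for the extension argument the pullback must be constructed inside an amalgam of $M$ and $N$ (available by the $\lambda$-AP hypothesis), so that $N$, $M$, and $g[N]$ sit compatibly and a realisation of $p \upharpoonright g[N]$ can be transported back to a realisation over $N$. Stability is used only in weak extension, to guarantee existence of the pullback as a type in $\gS(N)$ rather than merely in an isomorphic copy; the uniqueness argument uses only universality and the bare definition of non-splitting.
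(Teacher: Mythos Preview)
The paper does not supply its own proof of this fact; it merely cites \cite{van02} and \cite[Fact 3.5]{vas17satfork}. Your argument is the standard one found in those references, and the verifications of invariance, monotonicity, base monotonicity, and weak uniqueness are correct. One small point on weak uniqueness: the reduction ``it suffices to show $p \upharpoonright N' = q \upharpoonright N'$ for every $N' \in \K_\lambda$'' needs either $N \in \K_\lambda$ (so that $N' = N$ works) or $\lambda$-tameness. In the paper's setting the relation is only ever applied inside $\K_\lambda$, so this is harmless, but you should say so.

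Your construction for weak extension is also correct: with $g : N \to M$ fixing $M_0$, the pullback $q := g^{-1}(p \upharpoonright g[N])$ is a genuine type in $\gS(N)$; that $q \upharpoonright M = p$ follows from non-splitting of $p$ applied to the pair $M, g[M] \lek M$; and that $q$ does not $\lambda$-split over $M_0$ follows by conjugating any putative splitting witness through $g$. However, your explanation of where stability enters is wrong. Transporting a Galois type along an isomorphism is pure invariance and requires no hypothesis whatsoever; there is no ``type-counting needed to ensure the assignment is consistent across all size-$\lambda$ intermediate submodels,'' because you are pulling back along a single isomorphism, not gluing local data. Stability plays no role in the proofs of weak uniqueness or weak extension. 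It appears in the statement only as the ambient hypothesis guaranteeing that universal extensions (and hence non-vacuous instances of these weak properties) exist. You should remove those sentences; the last paragraph about amalgams is likewise unnecessary, since $g[N] \lek M \lek N$ already places everything inside $N$.
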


We will sometimes be interested in relations possibly only defined on models in $\kkappalims$ for stable $\lambda$ and regular $\kappa<\lambda^+$. In this context, continuity loses meaning for short limits, so we use an appropriate replacement, \emph{universal continuity*}. 

Before this, we define the circumstance that allows us to go back and forth between types in different ACs.

\begin{definition}
	Let $\K$ be an AC with sub AC $\K'$, such that $\K'$ has AP. We say that $\K'$ \emph{respects types from $\K$} if
	\begin{enumerate}
		\item for all $M \in \K'$, $\Phi_M : \gS_{\K'}(M) \rightarrow \gS_{\K}(M)$ given by $\Phi_M(\gtp_{\K'}(a/M, N)) = \gtp_{\K}(a/M, N)$ for $M \leq_{\K'} N$ and $a \in N$ is a well defined bijection
		\item when $M \leq_{\K'} N$, $p \in \gS_{\K'}(M)$, and $q \in \gS_{\K'}(N)$, then $p \subseteq q$ if and only if $\Phi_M(p) \subseteq \Phi_N(q)$.
	\end{enumerate}
\end{definition}

Essentially this says that we do not mind which AC our types are computed in. This is necessary in Definition \ref{continuity*-def}, since we want to say that the non-forking types in $\K'$ have an extension in $\K$. Because of this we will abuse notation and not distinguish between $p$ and $\Phi_M(p)$, including in the Definition \ref{continuity*-def}.

The following shows that when $\K'$ is between $\K_\lambda$ and $\Kkappalims$, $\K'$ respects types from $\K$. We will work in this context for our main result (see Hypothesis \ref{main_hypothesis}).

\begin{lemma}\label{respects-types-and-embeddings-if-contains-all-high-limits}
	Suppose $\K$ is an AEC and $\lambda \geq \LS(\K)$, that $\K$ is $\lambda$-stable and $\K_\lambda$ has AP, and that $\K'$ is an AC where $\Kkappalims \subseteq \K' \subseteq \K_\lambda$. Then 
	\begin{enumerate}
		\item Embeddings in $\K'$ are just $\K$-embeddings between elements of $\K'$
		\item $\K'$ has AP and respects types from $\K$.
	\end{enumerate} 
	 
\end{lemma}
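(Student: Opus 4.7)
The plan is to establish (1) directly from the definitions of AC and sub-AC, then use (1) in combination with a two-step shrink-then-extend maneuver -- L\"owenheim-Skolem downward to land in $\K_\lambda$, followed by Fact \ref{limit-models-exist} to produce a $(\lambda, \kappa)$-limit model landing in $\Kkappalims \subseteq \K'$ -- to furnish all the witnesses required for (2). For (1): a $\K'$-embedding $f : M \to N$ with $M, N \in \K'$ is an isomorphism onto $f[M]$ with $f[M] \leq_{\K'} N$, and since $\leq_{\K'} = \lek \upharpoonright (K')^2$ this is automatically a $\K$-embedding. Conversely, if $f : M \to N$ is a $\K$-embedding with $M, N \in \K'$, then $f[M] \cong M \in K'$ (since the AC $\K'$ is closed under isomorphism) and $f[M] \lek N$, so $f[M] \leq_{\K'} N$ and $f$ is a $\K'$-embedding.

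For AP in $\K'$: given $M, M_1, M_2 \in \K'$ with $M \leq_{\K'} M_l$, apply $\K_\lambda$-AP (available since $\K' \subseteq \K_\lambda$) to obtain $N \in \K_\lambda$ and $\K$-embeddings $f_l : M_l \to N$ fixing $M$. Because $N$ need not lie in $\K'$, use Fact \ref{limit-models-exist} to pick $N^* \in \K_\lambda$ a $(\lambda, \kappa)$-limit over $N$; then $N^* \in \Kkappalims \subseteq \K'$. The compositions $M_l \to N \lek N^*$ are $\K$-embeddings fixing $M$, hence $\K'$-embeddings by (1), witnessing AP in $\K'$.

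For the respects-types clause: well-definedness of $\Phi_M$ follows from (1), as any $\K'$-witness of type equality is automatically a $\K$-witness. For injectivity, if $\gtp_\K(a/M, N) = \gtp_\K(a'/M, N')$ with $N, N' \in \K'$, take a $\K$-witness $N''$ with $\K$-embeddings $f, g$ fixing $M$ and $f(a) = g(a')$; shrink $N''$ via L\"owenheim-Skolem to a $\K$-substructure of size $\lambda$ containing $f[N] \cup g[N']$, then extend that substructure to some $N^* \in \Kkappalims \subseteq \K'$ by Fact \ref{limit-models-exist}. By (1) the resulting embeddings $N, N' \to N^*$ are $\K'$-embeddings and witness type equality in $\K'$. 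For surjectivity, any $p = \gtp_\K(a/M, N) \in \gS_\K(M)$ can be represented with $N \in \K_\lambda$ by L\"owenheim-Skolem, and extending $N$ to some $N^* \in \Kkappalims$ preserves the type, giving $p = \Phi_M(\gtp_{\K'}(a/M, N^*))$. The extension clause $p \subseteq q$ iff $\Phi_M(p) \subseteq \Phi_N(q)$ follows by the same shrink-then-extend pattern applied to a witness of extension in $\K$, combined with bijectivity of $\Phi$.

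The main obstacle is really systematic bookkeeping rather than a deep idea: each time a witness is needed inside $\K'$ (an amalgam, an ambient model for a realization, or an equality/extension witness), one pairs a downward L\"owenheim-Skolem move (landing in $\K_\lambda$) with an upward limit-model move (landing in $\Kkappalims \subseteq \K'$), and applies (1) to promote the resulting $\K$-embeddings to $\K'$-embeddings. The routine check throughout is that this two-step maneuver preserves the relevant realizations and does not change any of the Galois types involved, which is immediate from invariance of Galois types under enlarging the ambient model via a $\K$-embedding fixing the domain.
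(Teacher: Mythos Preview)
Your proof is correct and follows essentially the same approach as the paper: use part (1) to identify $\K'$-embeddings with $\K$-embeddings, then for every witness needed in (2) (amalgams, type-equality witnesses, extension witnesses) land in $\K'$ by extending to a $(\lambda,\kappa)$-limit model via Fact \ref{limit-models-exist}. Your argument for (1) is in fact a bit cleaner than the paper's: you observe directly that $f[M] \cong M \in K'$ by closure of the AC $\K'$ under isomorphism, so $f[M] \leq_{\K'} N$ follows immediately from $\leq_{\K'} = \lek \upharpoonright (K')^2$, whereas the paper invokes coherence. You are also more explicit than the paper about surjectivity of $\Phi_M$ and about the L\"owenheim--Skolem step needed when a type-equality witness in $\K$ is not a priori of size $\lambda$; the paper sweeps these under ``we can use the same embeddings (into an element of $\Kkappalims$)''. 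None of this is a substantive difference---both proofs are the same shrink-then-extend bookkeeping.
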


\begin{proof}
	(1) follows from an application of coherence and the fact that for all $M \in \K_\lambda$ there is $N \in \Kkappalims$, so also in $\K'$, where $M \lek N$. 
	
	To see (2), AP follows from AP in $\K_\lambda$ and then possibly increasing the largest model to one in $\Kkappalims$. 
	
	Now we show show $\K'$ respects types form $\K$. By (1) of the Lemma statement, if $M, N_1, N_2 \in \K'$ and $M \lek N_l, a_l \in N_l$ for $l = 1, 2$, then $\gtp_\K(a_1/M, N_1) = \gtp_\K(a_2/M, N_2)$ if and only if $\gtp_{\K'}(a_1/M, N_1) = \gtp_{\K'}(a_2/M, N_2)$ - we can use the same embeddings (into an element of $\Kkappalims$) to witness both. 
	
	Finally suppose $p \in \gS_{\K'}(M)$ and $q \in \gS_{\K'}(N)$. We must show $p \subseteq q$ if and only if $\Phi_M(p) \subseteq \Phi_N(q)$. First assume $p \subseteq q$. Then we can take $N' \in \K'$ and $a \in N'$ with $\gtp_{\K'}(a/N, N') = q$. Then $p = \gtp_{\K'}(a/M, N')$, and we have $\Phi_M(p) = \gtp_{\K}(a/M, N'))$ and $\Phi_N(q) = \gtp_{\K}(a/M, N'))$, meaning $\Phi_M(p)\subseteq \Phi_N(q)$.
	
	Conversely, assume $\Phi_M(p) \subseteq \Phi_N(q)$. Take $N' \in \K_\lambda$ and $a \in N'$ with $\Phi_N(q) = \gtp_{\K'}(a/N, N')$. So $\Phi_M(p) = \gtp_{\K'}(a/M, N')$. Without loss of generality, $N' \in \Kkappalims$, so $N' \in \K'$. Then since $\Phi_M, \Phi_N$ are bijections, we have $p = \gtp_{\K'}(a/M, N')$ and $q = \gtp_{\K'}(a/M, N')$, meaning $p \subseteq q$ as desired.
\end{proof}

\begin{remark}
	The proof that $p \subseteq q$ implies $\Phi_M(p) \subseteq \Phi_N(q)$ works in general when $K'$ respects types from $\K$. We only used that $\Kkappalims \subseteq \K' \subseteq \K_\lambda$ in the other direction to find $N'\in\Kkappalims$ above $N$.
\end{remark}

Before we define universal continuity*, we will justify why it is a natural replacement for universal continuity.

\begin{remark}\label{cty-star-justification-remark}
	Suppose $\dnf$ is an independence relation on an AEC $\K$ that satisfies weak extension and weak uniqueness. Note then if we have a $\subseteq$-increasing sequence $\langle p_i : i < \delta \rangle$ of types where $p_i \in \gS(M_i)$ $\dnf$-does not fork over $M_0$ and $M_i \lek^u M_{i+1}$ for all $i < \delta$, then by taking an extension $q \in \gS(\bigcup_{i<\delta} M_i)$ of $p_1$ where $q$ $\dnf$-does not fork over $M_0$, weak uniqueness gives that $q$ extends every $p_i$. So there is at least one extension of each $p_i$, and it $\dnf$-does not fork over $M_0$. Universal continuity says that \emph{any} $q' \in \gS(\bigcup_{i<\delta} M_i)$ extending all $p_i$ is a $\dnf$-non-forking (over $M_0$) extension. But then weak uniqueness (using that $q\upharpoonright M_1 = p_1 = q' \upharpoonright M_1$ and $M_0 \lek^u M_1$) guarantees $q = q'$. From this we see that continuity in this context is equivalent to saying that there exists \emph{exactly one} type over $\bigcup_{i<\delta} M_i$ extending every $p_i$.
\end{remark}

Armed with this, we define the following:

\begin{definition}[{\cite[Definition 3.2]{bema}}]\label{continuity*-def}
    Let $\K'$ be a sub-AC of AEC $\K$, and $\dnf$ an independence relation on $\K'$. Assume that $\K'$ has AP and respects types from $\K$. We say $\dnf$ satisfies \emph{universal continuity* in $\K$} if the following holds: 
    
    Suppose $\delta$ is a limit ordinal, $\langle M_i : i < \delta \rangle$ is a $\leq$-increasing sequence in $\K'$, and $N \in \K'$ such that $M_i \lek N$ for all $i < \delta$. Suppose further that $\langle p_i : i < \delta \rangle$ is a $\subseteq$-increasing sequence of types where $p_i \in \gS(M_i)$ such that $p_i$ $\dnf$-does not fork over $M_0$ for all $i < \delta$. Then there exists a unique $p_\delta \in \gS(\bigcup_{i < \delta}M_i)$ such that $p_\delta \upharpoonright M_i = p_i$ for all $i < \delta$. 
\end{definition}

Intuitively, universal continuity* says that the union of non-forking types satisfies the reformulation of universal continuity from Remark \ref{cty-star-justification-remark}, even if it escapes the AC $\K'$ (strictly speaking it is the images of the types $p_i$ under $\Phi_{M_i}$ that are subsets of the type $q$). We impose the condition that $M_i \lek N$ for some $N \in \K'$ so that there is still a non-forking extension of all the types, just higher up in $\K'$ (when $\Kkappalims \subseteq \K' \subseteq \K_\lambda$, this ensures that we do not escape $\K_\lambda$). The upshot is that for any such $N \in \K$ and for any $q \in \gS(N)$ extending $p_i$ for all $i < \delta$ which $\dnf$-does not fork over $M_0$ (exactly one of which exists by weak extension and uniqueness), we get that $p_\delta = q \upharpoonright \bigcup_{i < \delta} M_i$.

\begin{remark}
	In \cite[Definition 3.2]{bema}, the first author neglected to include the `respecting types' assumption. Without this, the definition is flawed, since we cannot necessarily compare types in $\K$ and $\K'$. That said, all instances where universal continuity* was applied in that paper did respect types (in particular it is implied by the other assumptions of \cite[Definition 3.4]{bema} by Lemma \ref{respects-types-and-embeddings-if-contains-all-high-limits}) which was the version of universal continuity* assumed throughout the paper besides \cite[\textsection 3.4]{bema}, where it also holds, so this is a very minor omission and does not affect any of the results.
\end{remark}

\begin{definition}
    Given an independence relation $\dnf$ on an AC $\K$ and a sub-AC $\K' \lek \K$, the \emph{restriction of $\dnf$ to $\K'$} (denoted $\dnf \upharpoonright \K'$) is the restriction of $\dnf$ to tuples $(M_0, a, M, N)$ where all models are in $\K'$.
\end{definition}

\begin{definition}
    Let $\K$ be an AEC stable in $\lambda \geq \LS(\K)$ where $\K_\lambda$ has AP, and $\K'$ an AC where $\Kkappalims \subseteq \K' \subseteq \K$. We say $\dnf$ satisfies \emph{$\Kkappalims$-universal continuity* in $\K$} if the restriction of $\dnf$ to $\Kkappalims$ satisfies universal continuity* in $\K$.
\end{definition}

\begin{remark}
	Note that the relation is still strictly speaking only defined on types in $\K'$, not necessarily on types in $\K$ or $\Kkappalims$. That said, since $\K'$ and $\Kkappalims$ respect types from $\K$, we can think of types over models from $\Kkappalims$ as computed in $\K$, $\Kkappalims$, or $\K'$ interchangeably by Remark \ref{respects-types-and-embeddings-if-contains-all-high-limits}, so it makes sense to think about the restriction $\dnf \upharpoonright \Kkappalims$ as an independence relation on $\Kkappalims$.
\end{remark}

By the same approach as \cite[Lemma 3.11]{bema}, $\Kkappalims$-universal continuity* in $\K$ implies $(\geq \kappa)$-universal continuity, even with our weaker versions of extension and uniqueness.

\begin{lemma}\label{uniform-cty-star-gives-high-univ-cty}
	Let $\K$ be an AEC, and $\K'$ an AC where $\Kkappalims \subseteq \K' \subseteq \K$, and let $\dnf$ be an independence relation on $\K'$ satisfying weak uniqueness, weak extension, and $\Kkappalims$-universal continuity*. Then $\dnf$ satisfies $(\geq \kappa)$-universal continuity.
\end{lemma}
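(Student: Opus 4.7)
The plan is to follow the reformulation of universal continuity sketched in Remark \ref{cty-star-justification-remark}: produce a type $q \in \gS(M_\delta)$ that $\dnf$-does not fork over $M_0$ and agrees with $p$ on every $M_i$, then invoke the uniqueness supplied by $\Kkappalims$-universal continuity* to force $q = p$. Fix $\delta$ limit with $\cof(\delta) \geq \kappa$, a $\lek^u$-increasing sequence $\langle M_i : i < \delta\rangle$ in $\K'$ with union $M_\delta \in \K'$, and $p \in \gS(M_\delta)$ whose restriction to each $M_i$ (for $i \geq 1$) $\dnf$-does not fork over $M_0$. Since $M_0 \lek^u M_1 \lek M_\delta$, weak extension applied to $p \upharpoonright M_1$ yields $q \in \gS(M_\delta)$ extending $p \upharpoonright M_1$ and $\dnf$-not forking over $M_0$. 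For each $i \geq 1$, both $q \upharpoonright M_i$ and $p \upharpoonright M_i$ $\dnf$-do not fork over $M_0$ (using monotonicity for $q$) and agree on $M_1$; since $M_0 \lek^u M_1 \lek M_i$, weak uniqueness forces $q \upharpoonright M_i = p \upharpoonright M_i$.

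To conclude $q = p$ I would transfer the problem into $\Kkappalims$, since universal continuity* is assumed only there while arbitrary $M_i$ may lie in $\K' \setminus \Kkappalims$. Fix a strictly increasing cofinal $\langle j_\alpha : \alpha < \cof(\delta)\rangle \subseteq \delta$ with $j_0 = 0$, and inside each $M_{j_{\alpha+1}}$ (which is $\lek^u$ over $M_{j_\alpha}$) pick a $(\lambda, \kappa)$-limit $N_\alpha$ over $M_{j_\alpha}$. Then $N_\alpha \in \Kkappalims$, the chain $\langle N_\alpha : \alpha < \cof(\delta)\rangle$ is $\lek^u$-increasing (since $N_\alpha \lek M_{j_{\alpha+1}} \lek^u N_{\alpha+1}$ gives $N_\alpha \lek^u N_{\alpha+1}$), and $\bigcup_\alpha N_\alpha = M_\delta \in \Kkappalims$. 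The restrictions $p \upharpoonright N_\alpha$ form a $\subseteq$-increasing sequence of $\dnf$-non-forking types over $N_0$ (by monotonicity for the $M_0$-base, then base monotonicity using $M_0 \lek N_0 \lek N_\alpha$), and by the first paragraph $q \upharpoonright N_\alpha = p \upharpoonright N_\alpha$ for every $\alpha$. Hence $p$ and $q$ are both extensions in $\gS(M_\delta)$ of the same tower of non-forking types, so the uniqueness clause of $\Kkappalims$-universal continuity* (with the witness $N := M_\delta$) forces $p = q$, and therefore $p$ $\dnf$-does not fork over $M_0$.

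The principal obstacle is the $\Kkappalims$-transfer: universal continuity* as stated does not apply directly to a $\lek^u$-chain in $\K'$ whose members are not $(\lambda, \geq \kappa)$-limits, and one cannot simply pass to a cofinal subsequence without first ensuring the subsequence lies in $\Kkappalims$. Interpolating the $(\lambda, \kappa)$-limits $N_\alpha$ manufactures such a subsequence sharing the union $M_\delta$, which is exactly what couples the weak uniqueness/extension manoeuvres of the first paragraph to the continuity* conclusion of the second.
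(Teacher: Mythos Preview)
Your proof is correct and follows essentially the same approach as the paper: produce the non-forking extension $q$ via weak extension, use weak uniqueness to match $p$ and $q$ on an interpolated $\lek^u$-chain of $(\lambda,\geq\kappa)$-limit models with the same union $M_\delta$, and then invoke the uniqueness clause of $\Kkappalims$-universal continuity* to conclude $p=q$. The only cosmetic difference is that the paper interleaves $N_i$ between every pair $M_i \lek^u M_{i+1}$ for $i<\delta$, whereas you first pass to a cofinal subsequence of length $\cof(\delta)$ before interleaving; both yield a suitable chain in $\Kkappalims$ with union $M_\delta$.
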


\begin{proof}
	Suppose $\cof(\delta) \geq \kappa$, and $\langle M_i : i < \delta \rangle$ is a $\lek^u$-increasing sequence of models where $M_i \in \K'$ for all $i < \delta$ such that $\bigcup_{i < \delta} M_i \in \K'$ (note this is always the case if $\delta < \lambda^+$ as $\Kkappalims \subseteq \K'$). Suppose further that there exists $p \in \gS(\bigcup_{i<\delta} M_i)$ such that $p \upharpoonright M_i$ $\dnf$-does not fork over $M_0$ for all $i < \delta$. By weak extension, take $q \in \gS(\bigcup_{i < \delta} M_i)$ extending $p\upharpoonright M_1$ such that $q$ $\dnf$-does not fork over $M_0$. It is enough to show $p = q$.
	
	Take a sequence $\langle N_i : i < \delta \rangle$ such that $M_i \lek^u N_i \lek^u M_{i+1}$ and $N_i \in \Kkappalims$. For all $i \in [1, \delta)$, since $p \upharpoonright N_i$ and $q \upharpoonright N_i$ both $\dnf$-do not fork over $M_0$ by monotonicity, $(p \upharpoonright N_i) \upharpoonright M_1 = (q\upharpoonright N_i) \upharpoonright M_1$,  and $M_0 \lek^u M_1 \lek N_i$, we have $p \upharpoonright N_i = q \upharpoonright N_i$ by weak uniqueness. So $p \upharpoonright N_i \subseteq p, q$ for all $i < \delta$. Note that $p \upharpoonright N_i$ $\dnf$-does not fork over $N_0$ by base monotonicity for each $i \in [1, \delta)$. Then $\Kkappalims$-universal continuity* in $\K$ implies there is exactly one type in $\gS(\bigcup_{i < \delta}M_i) = \gS(\bigcup_{i < \delta}N_i)$ extending $p \upharpoonright N_i$ for all $i < \delta$, so we must have that $p=q$ as desired.
\end{proof}

\begin{fact}[{\cite[3.7]{vas17satfork}}]\label{weak-uniq-and-ext-implies-trans}
    Let $\dnf$ be an independence relation on an AC $\K$. If $\dnf$ has weak uniqueness and weak extension, then $\dnf$ satisfies weak transitivity.
\end{fact}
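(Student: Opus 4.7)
The plan is a short direct argument combining weak extension with weak uniqueness. Suppose $M_0 \lek M_1 \lek^u M_2 \lek M_3$ and $p \in \gS(M_3)$ satisfies that $p$ $\dnf$-does not fork over $M_1$ and $p \upharpoonright M_2$ $\dnf$-does not fork over $M_0$; the goal is to conclude $p$ $\dnf$-does not fork over $M_0$.

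First I would record the auxiliary fact that $M_0 \lek^u M_2$ follows from $M_0 \lek M_1 \lek^u M_2$ together with AP. Given any $N'$ extending $M_0$ of the appropriate size, amalgamate $N'$ and $M_1$ over $M_0$ inside some $N''$ with $M_1 \lek N''$, then invoke universality of $M_2$ over $M_1$ to obtain an embedding $N'' \to M_2$ fixing $M_1 \supseteq M_0$; restricting yields the required embedding $N' \to M_2$ fixing $M_0$. This is routine bookkeeping but is needed so that weak extension can be applied with base $M_0$ and middle model $M_2$.

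Second, since $p \upharpoonright M_2$ $\dnf$-does not fork over $M_0$ and $M_0 \lek^u M_2 \lek M_3$, weak extension produces some $q \in \gS(M_3)$ extending $p \upharpoonright M_2$ such that $q$ $\dnf$-does not fork over $M_0$. By base monotonicity (applied with $M_0 \lek M_1$), $q$ also $\dnf$-does not fork over $M_1$. Now both $p$ and $q$ live in $\gS(M_3)$, both $\dnf$-do not fork over $M_1$, and they agree on $M_2$; since $M_1 \lek^u M_2 \lek M_3$, weak uniqueness (with base $M_1$, middle $M_2$) forces $p = q$. As $q$ $\dnf$-does not fork over $M_0$, so does $p$, which is what we wanted.

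I do not anticipate a substantive obstacle: the only non-trivial point is the auxiliary observation $M_0 \lek^u M_2$, and even this is a well-known consequence of AP plus the definition of universal extension. The rest of the argument is merely lining up hypotheses with the statements of weak extension and weak uniqueness, exactly as in the analogous classical derivation of transitivity from extension and uniqueness.
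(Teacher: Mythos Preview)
Your proposal is correct and follows essentially the same route as the paper: apply weak extension over $M_0$ (using $M_0 \lek^u M_2$) to get $q \in \gS(M_3)$, then use base monotonicity and weak uniqueness over $M_1$ to conclude $p = q$. The only difference is that you spell out why $M_0 \lek^u M_2$ (via AP), which the paper leaves implicit.
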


\begin{proof}
    Suppose $M_0 \lek M_1 \lek^u M_2 \lek M_3$ and $p \in \gS(M_3)$ where $p$ $\dnf$-does not fork over $M_1$ and $p \upharpoonright M_2$ $\dnf$-does not fork over $M_0$. Since $M_0 \lek^u M_2 \lek M_3$ and $p \upharpoonright M_2$ $\dnf$-does not fork over $M_0$, there is $q \in \gS(M_3)$ extending $p \upharpoonright M_2$ which $\dnf$-does not fork over $M_0$. By monotonicity, $q$ $\dnf$-does not fork over $M_1$. Using $M_1 \lek^u M_2 \lek M_3$, $p \upharpoonright M_2 = q \upharpoonright M_2$, and that $p, q$ both $\dnf$-do not fork over $M_1$, by weak uniqueness, $p = q$. Therefore $p$ $\dnf$-does not fork over $M_0$.
\end{proof}

We won't need the following lemma for our main argument, but it will be useful in the discussion of Subsection \ref{subsection-non-alg-jep-nmm}. The proof is very similar to \cite[Fact 2.4(8)]{vasnotes}.

\begin{lemma}\label{lemma-weak-uniq-and-ext-gives-disjoint}
	Suppose $\K$ is stable in $\lambda \geq \LS(\K)$, $\kappa < \lambda^+$ regular, and $\dnf$ is an independence relation on $\Kkappalims$ satisfying weak uniqueness, weak extension, and $(\lambda, \geq \kappa)$-local character. Then $\dnf$ satisfies weak disjointness.
\end{lemma}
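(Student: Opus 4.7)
The plan is to argue by contradiction: suppose $p \in \gS(N)$ is algebraic, witnessed by some $a \in N$. Since $p \upharpoonright M$ is non-algebraic, necessarily $a \notin M$ (and more strongly, no element of $M$ shares $a$'s type over $M$). The goal is to produce an element of $M$ realising $p \upharpoonright M$, contradicting non-algebraicity.

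First, I would exploit $M_0 \lek^u M$ together with $\|N\| \leq \lambda$: by universality of $M$ over $M_0$, there is a $\K$-embedding $f : N \to M$ fixing $M_0$. Set $b := f(a) \in M$; by invariance of types under $\K$-embeddings,
\[
    \gtp(b/M_0, M) = \gtp(a/M_0, N) = p \upharpoonright M_0.
\]
Thus $b \in M$ realises $p \upharpoonright M_0$. The remaining task is to upgrade this $M_0$-type agreement to an $M$-type agreement --- that is, to show $\gtp(b/M, M) = p \upharpoonright M$ --- which is the desired contradiction, since the former would be algebraic (witnessed by $b \in M$) while the latter is assumed non-algebraic.

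Both candidate types lie in $\gS(M)$ and restrict to $p \upharpoonright M_0$; by hypothesis $p \upharpoonright M$ $\dnf$-does not fork over $M_0$. The strategy is to show $\gtp(b/M, M)$ also $\dnf$-does not fork over $M_0$ and then to invoke weak uniqueness. For the non-forking step I would build a $\lek^u$-increasing sequence $\langle M_i : i < \kappa \rangle$ of length $\kappa$ starting at $M_0$ and sitting inside $M$ (using $M_0 \lek^u M$ and iterated universal extensions, amalgamated into $M$), set $M^* := \bigcup_{i < \kappa} M_i$, and apply $(\geq \kappa)$-local character to $\gtp(b/M^*)$ to obtain some $i_0 < \kappa$ at which this type does not fork over $M_{i_0}$. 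Using that $\gtp(b/M_0) = p \upharpoonright M_0$ trivially does not fork over $M_0$ and that the chain has $\lek^u$-gaps at successor stages, I would apply weak transitivity (Fact~\ref{weak-uniq-and-ext-implies-trans}) to push the non-forking base down to $M_0$, and iterate weak uniqueness up the chain to align $\gtp(b/\cdot)$ with $p \upharpoonright \cdot$ stagewise.

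The main obstacle is exactly the mismatch between what weak uniqueness delivers and what we need: weak uniqueness demands a $\lek^u$-separator between the base of non-forking and the level at which the two types agree, whereas the two candidate types naturally agree only on $M_0$, and $M_0 \lek^u M_0$ of course fails. Overcoming this requires a careful stagewise bootstrap along the chain constructed above, where at each successor step the required $\lek^u$-gap is furnished by construction and weak uniqueness can be applied legitimately. This iterative alignment, powered by $(\geq \kappa)$-local character and weak transitivity, is the technical heart of the argument and mirrors the dance used in Vasey's proof of \cite[Fact 2.4(8)]{vasnotes}.
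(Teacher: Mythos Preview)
Your bootstrap cannot get off the ground. Weak uniqueness requires the two candidate types to agree at some level $M'$ with $M_0 \lek^u M' \lek N$, but your embedding $f : N \to M$ fixes only $M_0$, so the only level at which $\gtp(b/\,\cdot\,)$ and $p \upharpoonright \cdot$ are known to agree is $M_0$ itself. Local character on $\gtp(b/M^*)$ tells you this type does not fork over some $M_{i_0}$, but to push the base down to $M_0$ via weak transitivity you would need $\gtp(b/M_{i_0+1})$ to already not fork over $M_0$, which is circular; and even granting that, weak uniqueness still cannot compare $\gtp(b/M_1)$ with $p \upharpoonright M_1$ because they are only known to agree on $M_0$, not on anything $\lek^u$-above $M_0$. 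No amount of iteration fixes this: at every stage the agreement sits exactly at the non-forking base, one universal step short of where weak uniqueness needs it.

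The paper's argument sidesteps this by using an \emph{isomorphism} $f : M \cong N$ fixing an intermediate model $M_1$ with $M_0 \lek^u M_1$, rather than an embedding fixing only $M_0$. To make such an $f$ available, one first reduces (using $M_0 \lek^u M$ and weak extension) to the case where $M$ is a $(\lambda,\kappa)$-limit over $M_0$ and $N$ is a $(\lambda,\kappa)$-limit over $M$; then one picks $M_1$ so that both $M$ and $N$ are $(\lambda,\kappa)$-limits over $M_1$, and Fact~\ref{cofinality-isomorphisms} supplies $f$. Because $f$ fixes $M_1$, the types $f(p \upharpoonright M)$ and $p$ agree on $M_1$, both do not fork over $M_0$, and $M_0 \lek^u M_1$---so weak uniqueness applies directly and gives $f(p \upharpoonright M) = p$, whence $p$ is non-algebraic. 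The missing idea in your approach is precisely this: arrange the map to fix something strictly $\lek^u$-above $M_0$, not just $M_0$.
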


\begin{proof}
	Let $M_0 \lek^u M \lek N$ be $(\lambda, \geq \kappa)$-limit models, and $p \in \gS(N)$ where $p$ $\dnf$-does not fork over $M$ and $p \upharpoonright M$ is non-algebraic. By taking a $(\lambda, \kappa)$-limit model $M'$ over $M_0$, which is without loss of generality a $\K$-substructure of $M$ since $M_0 \lek^u M$, by replacing $M$ by $M'$ we may assume that $M$ is a $(\lambda, \kappa)$-limit over $M_0$. Similarly by taking $N'$ a $(\lambda, \kappa)$-limit model over $N$ and using weak extension on $p$ to get $p' \in \gS(N')$ extending $p$ and $\dnf$-non-forking over $M_0$, by replacing $N$ and $p$ by $N'$ and $p'$ respectively, we may assume that $N$ is a $(\lambda, \kappa)$-limit model over $M$.
	
	Since $M$ is a $(\lambda, \kappa)$-limit model over $M_0$, there is some $M_1$ such that $M_1$ is a $(\lambda, \kappa)$-limit model over $M_0$ and $M$ is a $(\lambda, \kappa)$-limit model over $M_1$ (take some sequence $\langle M_i : i < \kappa \rangle$ where $M_{i+1}$ is a $(\lambda, \kappa)$-limit model over $M_i$ for all $i$ witnessing that $M$ is a $(\lambda, \kappa)$-limit model over $M_0$, and use $M_1$ from that sequence). By uniqueness of limit models of the same cofinality (Fact \ref{cofinality-isomorphisms}), there exists an isomorphism $f : M \cong N$ fixing $M_1$. Note that $f(p \upharpoonright M)$ and $p$ are both extensions of $p \upharpoonright M_1$ which $\dnf$-do not fork over $M_0$. By weak uniqueness, $f(p \upharpoonright M) = p$. Since $p \upharpoonright M$ is non-algebraic, $p$ is non-algebraic also.
\end{proof}

\subsection{On Symmetry}

Symmetry is another nice property of independence relations, and appears in some form as an assumption in all positive `limit models are isomorphic' results known to the author. Shelah proved the original form in the context of first order stable theories, specifically, that $a \dnf_M Mb$ implies $b \dnf_M Ma$.\footnote{Here $a \dnf_M b$ denotes that $\operatorname{tp}(a/M \cup \{b\}, \mon)$ does not fork over $M$, where $\mon$ is the monster model.} \cite[Theorem III.4.13]{sh:c}. This does not adapt immediately to our notion of independence relation, since our relations only allow \emph{models} on the right. To accommodate for this when defining \emph{good frames}, Shelah used a notion of symmetry that essentially says if $a$ is independent of some model $M^b$ containing $b$ (over $M$), then $b$ is independent of some model $M^a$ containing $a$ (over $M$) \cite[2.1(E)(f)]{sh:h}. This notion was adapted to $\lambda$-non-splitting and called $\lambda$-symmetry, and a hierarchy of similar formulations of symmetry were identified by VanDieren and Vasey, which are often equivalent \cite[\textsection 4]{vanvas}. These additionally require that the models are limit models, and some require that certain models are universal extensions of others, partly because limit models are so well behaved in this setting.

Symmetry often follows from some level of categoricity; in particular, assuming $\K$ has enough AP and NMM, $\lambda$-superstability and $\lambda$-symmetry follow from categoricity in some $\mu > \lambda$ \cite[4.8]{vas17satsol}. $\lambda$-symmetry also follows from $\lambda$-superstability and $\lambda$-tameness \cite[6.9]{vanvas}.

In a $\lambda$-superstable AEC with $\lambda$-symmetry, a very well behaved independence relation on limit models called $\lambda$-non-forking can be defined from $\lambda$-non-splitting. With some additional assumptions including tameness and that $\lambda$ is a successor, we in fact have a good $\lambda$-frame \cite[6.6]{vanvas}, \cite[17.9]{vasnotes}.

Symmetry can also be used to prove non-forking amalgamation \cite[5.2]{vas19}, \cite[2.21]{bema}. Non-forking amalgamation essentially says that we can amalgamate two models with named singletons in such a way that the images of the singletons and the `opposite' models are independent. In fact, non-forking amalgamation can be used instead of symmetry to prove limit models are isomorphic, as in \cite[2.7]{vas19}, \cite[3.1]{bema}, and (as we will later see) Theorem \ref{main-theorem}.

While $\lambda$-symmetry is the natural analogue in $\lambda$-superstable AECs, in the strictly stable setting only high cofinality limit models are well behaved, so the notion must be adapted. Boney and VanDieren defined a further weakening, $(\lambda, \theta)$-symmetry, in \cite[2.8]{bovan} which is more appropriate in this strictly stable context. In this subsection, our goal is to define a version of non-forking amalgamation that follows from their version of symmetry, that will be sufficiently strong to prove the main theorem.

Since we want to avoid assuming the existence of a monster model (in particular that we may not have JEP or NMM), we will have to keep track of the models our types are computed in. This muddies the picture somewhat, particularly the proof of Lemma \ref{symmetry_implies_nfap}, so we provide diagrams where we think they will be useful.

\begin{definition}
    Given $\K$ and AEC stable in $\lambda \geq \LS(\K)$, $\dnf$ an independence relation on some AC $\K'$ where $\Kkappalims \subseteq \K' \subseteq \K_\lambda$, and $\theta < \lambda^+$ limit, we say $\dnf$ satisfies 
    \begin{enumerate}
    	
        \item (\cite[2.8]{bovan}) \emph{$(\lambda, \theta)$-symmetry} if whenever $M_0, M, N, M^a, N^a \in \K'$ where $M$ is a $(\lambda, \theta)$-limit model over $M_0$, $M \lek^u M^a \lek N^a$, $M \lek N \lek N^a$, $a, b \in N$, $a \in M^a$, and $\gtp(a/M, N)$ $\dnf$-does not fork over $M_0$ and $\gtp(b/M^a, N^a)$ $\dnf$-does not fork over $M$, then there exist $M^b, N^b \in \K'$ where $M \lek M^b \lek N^b$ and $N \lek N^b$ such that $\gtp(a/M^b, N^b)$ $\dnf$-does not fork over $M_0$ (see Figure \ref{symmetry_diagram}).
        
        \begin{figure}[!ht]
\centering

\begin{circuitikz}
\tikzstyle{every node}=[font=\normalsize]
\node [font=\normalsize] at (7.75,4.75) {$M_0$};
\node [font=\normalsize] at (8.75,5.75) {$(\lambda, \theta)$-limit};
\node [font=\normalsize] at (6.75,6.9) {$u$};
\node [font=\normalsize] at (7.75,6.75) {$M$};
\node [font=\normalsize] at (5,7.25) {$a$};
\node [font=\normalsize, rotate=20] at (5.3,7.35) {$\in$};
\node [font=\normalsize] at (5.75,7.5) {$M^a$};
\node [font=\normalsize] at (3.45,8.7) {$a \dnf_{M_0}^N M$};
\node [font=\normalsize] at (3.5,7.5) {$b \dnf_{M}^{N^a} M^a$};
\node [font=\normalsize, color=blue] at (10.5,7.25) {$b$};
\node [font=\normalsize, rotate=160, color=blue] at (10.2,7.35) {$\in$};
\node [font=\normalsize, color=blue] at (9.75,7.5) {$M^b$};
\node [font=\normalsize, color=blue] at (12,8.25) {$a \dnf_{M_0}^{N^b} M^b$};
\node [font=\normalsize] at (5.75,9.5) {$N^a$};
\node [font=\normalsize, color=blue] at (9.75,9.5) {$N^b$};
\node [font=\normalsize] at (7.75,9.5) {$a, b$};
\node [font=\normalsize, rotate=90] at (7.75,9.15) {$\in$};
\node [font=\normalsize] at (7.75,8.75) {$N$};
\draw [->, >=Stealth] (7.75,5) -- (7.75,6.5);
\draw [->, >=Stealth, color=blue, dashed] (8,6.75) -- (9.5,7.5);
\draw [->, >=Stealth] (7.75,7) -- (7.75,8.5);
\draw [->, >=Stealth, color=blue, dashed] (9.75,7.75) -- (9.75,9.25);
\draw [->, >=Stealth, color=blue, dashed] (8,8.75) -- (9.5,9.5);
\draw [->, >=Stealth] (7.5,6.75) -- (6,7.5);
\draw [->, >=Stealth] (5.75,7.75) -- (5.75,9.25);
\draw [->, >=Stealth] (7.5,8.75) -- (6,9.5);
\end{circuitikz}

\caption{$(\lambda, \theta)$-symmetry says that, given the solid black diagram, the dotted blue diagram exists.}
\label{symmetry_diagram}
\end{figure}
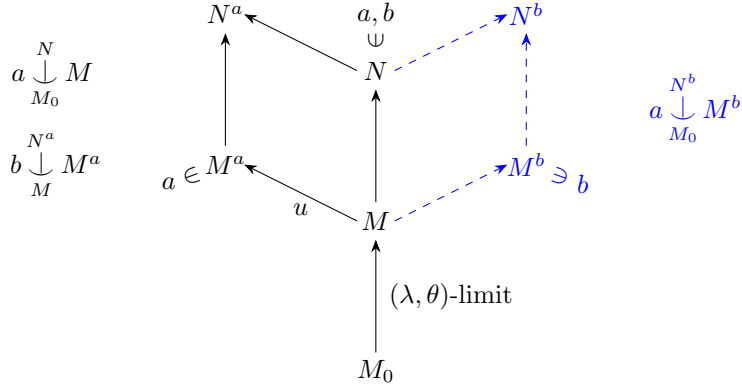
        
        \item \emph{$(\lambda, \theta)$-weak non-forking amalgamation} if for every $M_0, M, M_1, M_2 \in \K'$ where $M$ is a $(\lambda, \theta)$-limit model over $M_0$, and $M \lek M_l$ and $a_l \in M_l$ for $l = 1, 2$ such that $\gtp(a_l /M, M_l)$ $\dnf$-does not fork over $M_0$, there exist $N \in \K'$ and $f_l :M_l \underset{M_0}{\rightarrow} N$ $\K$-embeddings such that $\gtp(f_l(a_l) / f_{3-l}[M_{3-l}], N)$ $\dnf$-does not fork over $M_0$, for $l = 1, 2$ (see Figure \ref{weak_non_forking_amalgamation}).
        
        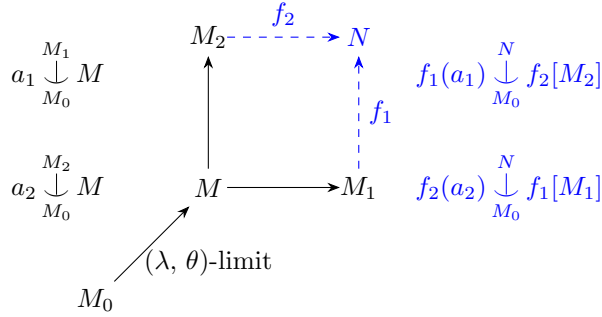
\begin{figure}[!ht]
\centering
\begin{circuitikz}
\tikzstyle{every node}=[font=\normalsize]
\node [font=\normalsize] at (4,11.5) {$a_1 \dnf_{M_0}^{M_1} M$};
\node [font=\normalsize] at (4,10) {$a_2 \dnf_{M_0}^{M_2} M$};
\node [font=\normalsize] at (4.5,8.5) {$M_0$};
\node [font=\normalsize] at (6,10) {$M$};
\draw [->, >=Stealth] (4.75,8.75) -- (5.75,9.75);
\draw [->, >=Stealth] (6,10.25) -- (6,11.75);
\draw [->, >=Stealth] (6.25,10) -- (7.75,10);
\node [font=\normalsize] at (8,10) {$M_1$};
\node [font=\normalsize] at (6,12) {$M_2$};
\node [font=\normalsize, color=blue] at (8.3,11) {$f_1$};
\node [font=\normalsize, color=blue] at (7,12.3) {$f_2$};
\draw [->, >=Stealth, dashed, color=blue] (6.25,12) -- (7.75,12);
\draw [->, >=Stealth, dashed, color=blue] (8,10.25) -- (8,11.75);
\node [font=\normalsize, color=blue] at (8,12) {$N$};
\node [font=\normalsize, color=blue] at (10,11.5) {$f_1(a_1) \dnf_{M_0}^N f_2[M_2]$};
\node [font=\normalsize, color=blue] at (10,10) {$f_2(a_2) \dnf_{M_0}^N f_1[M_1]$};
\node [font=\normalsize] at (6,9) {($\lambda$, $\theta$)-limit};
\end{circuitikz}

\caption{$(\lambda, \delta)$-weak non-forking amalgamation says that, given the solid black diagram, the dotted blue diagram exists.}
\label{weak_non_forking_amalgamation}
\end{figure}
    \end{enumerate}
\end{definition}

\begin{remark}
	The definition of $(\lambda, \theta)$-symmetry is equivalent if we assume $N = N^a$, which in some ways simplifies the picture, but we feel our presentation makes clearer how the situation is `symmetric' for $a$ and $b$.
\end{remark}

\begin{remark}
    In \cite{bovan}, a seemingly stronger form of symmetry than this is used. Their version says $M^b$ is limit over $M$ \cite[Definition 2.8]{bovan}. However, if $\K$ has AP and $\dnf$ satisfies weak uniqueness and weak extension, then with little work these are equivalent to an even nicer statement: we may assume the model $M^b$ is a $(\lambda, \kappa)$-limit model over $M$. That said, we won't actually need either of these `stronger' forms; we only need that $M \lek M^b$.
\end{remark}

The following argument is similar to \cite[Lemma 16.2]{vasnotes}, but we avoid using a monster model.

\begin{lemma}\label{symmetry_implies_nfap}
    Let $\K$ be an AEC where $\K_\lambda$ has AP, and let $\K'$ be an AC where $\Kkappalims \subseteq \K' \subseteq \K_\lambda$. If $\dnf$ is an independence relation on $\K$ satisfying weak extension, weak uniqueness, and $(\lambda, \theta)$-symmetry, then $\dnf$ satisfies $(\lambda, \theta)$-weak non-forking amalgamation.
\end{lemma}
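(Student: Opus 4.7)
The plan is to first realise $\gtp(a_2/M, M_2)$ as the non-forking type of some $b$ over a universal extension of $M_1$, and then invoke $(\lambda, \theta)$-symmetry to dualise this and obtain the analogous independence for $a_1$.

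First I would take $M^* \in \K_\lambda$ to be a $(\lambda, \theta)$-limit over $M_1$, so that $M_0 \lek^u M \lek M_1 \lek^u M^*$. By weak extension applied with $M_0 \lek^u M \lek M^*$, extend $\gtp(a_2/M, M_2)$ to some $q \in \gS(M^*)$ that $\dnf$-does not fork over $M_0$, and realise $q$ by an element $b$ in some $L \in \K_\lambda$ with $M^* \lek L$. Since $\gtp(b/M, L) = q \upharpoonright M = \gtp(a_2/M, M_2)$, amalgamation in $\K_\lambda$ together with the definition of equal Galois types allows us (after replacing $L$ by an appropriate extension in $\K_\lambda$) to obtain a $\K$-embedding $F_2 : M_2 \to L$ that fixes $M$, satisfies $F_2(a_2) = b$, and has $F_2[M_2] \lek L$. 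This already gives one half of the required amalgamation: $\gtp(F_2(a_2)/M_1, L) = \gtp(b/M_1, L)$ does not fork over $M_0$ by monotonicity via $M_1 \lek M^*$.

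Next I would apply $(\lambda, \theta)$-symmetry with $a = a_1$, $b = b$, $M^a = M^*$, and $N = N^a = L$. The hypotheses are easy to verify: $M \lek^u M^* \lek L$, $a_1 \in M_1 \lek M^*$, $b \in L$, $\gtp(a_1/M, L)$ does not fork over $M_0$ by monotonicity, and $\gtp(b/M^*, L) = q$ does not fork over $M$ by base monotonicity. Symmetry then produces $M^b, L^b \in \K'$ with $M \lek M^b \lek L^b$, $L \lek L^b$, $b \in M^b$, and $\gtp(a_1/M^b, L^b)$ not $\dnf$-forking over $M_0$. Using the remark preceding the lemma, I would then, without loss of generality, take $M^b$ to be a $(\lambda, \theta)$-limit over $M$; truncating its witnessing sequence at a stage containing $b$ yields $M'_b \in \K_\lambda$ with $M \lek M'_b \lek M^b$, $b \in M'_b$, and $M'_b \lek^u M^b$.

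The last step is to transfer the non-forking of $a_1$ from $M^b$ down to an actual $\K$-substructure of $M^b$ that carries a copy of $M_2$. Because $\gtp(b/M, M'_b) = \gtp(a_2/M, M_2)$, amalgamation in $\K_\lambda$ produces $N' \in \K_\lambda$ with $M'_b \lek N'$ and $\tilde F_2 : M_2 \to N'$ fixing $M$ with $\tilde F_2(a_2) = b$. Universality of $M^b$ over $M'_b$ gives an embedding $\rho : N' \to M^b$ fixing $M'_b$, and composing yields $F_2^\flat := \rho \circ \tilde F_2 : M_2 \to M^b$, which fixes $M$, sends $a_2$ to $b$, and satisfies $F_2^\flat[M_2] \lek M^b$. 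Monotonicity of $\gtp(a_1/M^b, L^b)$ to the substructure $F_2^\flat[M_2]$ then gives the second half: $\gtp(a_1/F_2^\flat[M_2], L^b)$ does not fork over $M_0$. To conclude, take $N := L^b \in \K'$, $F_1 := \operatorname{id}_{M_1} : M_1 \to L^b$ (which factors through $M^* \lek L \lek L^b$), and $F_2 := F_2^\flat : M_2 \to L^b$ (which factors through $M^b \lek L^b$); both embeddings fix $M \supseteq M_0$ and both required non-forking conditions have been verified.

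The main obstacle is precisely this last transfer step: without some form of universality of $M^b$ over $M$, the image $F_2^\flat[M_2]$ need not be $\K$-comparable to $M^b$ inside $L^b$, blocking any direct appeal to monotonicity. This is exactly what forces the upgrade of $M^b$ to a limit model over $M$, which is legitimised by the equivalence of the two presentations of $(\lambda, \theta)$-symmetry under AP, weak uniqueness, and weak extension noted before the lemma.
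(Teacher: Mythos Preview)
Your proposal is correct and shares its opening with the paper: realise a non-forking extension of $\gtp(a_2/M, M_2)$ over a universal extension of $M_1$, then invoke $(\lambda,\theta)$-symmetry to obtain $M^b$ with $\gtp(a_1/M^b, L^b)$ non-forking over $M_0$. The divergence is in the final transfer step. The paper keeps the \emph{original} embedded copy of $M_2$ (your $F_2[M_2]$, the paper's $g_1[M_2]$), which sits in the large model $L_2^*$ \emph{above} $M^b$; it then applies weak extension a second time to push the non-forking of $a_1$ from $L_2$ up to $L_2^*$, and uses type equality to rename back so that the composite map fixes $M$ and sends $a_2$ to $a_2^*$. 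You instead invoke the remark preceding the lemma to upgrade $M^b$ to a limit over $M$, and use that universality to manufacture a \emph{fresh} copy $F_2^\flat[M_2]$ landing \emph{inside} $M^b$, so that monotonicity alone gives the non-forking of $a_1$ over it. Both routes work; the paper's is self-contained, while yours trades the second weak-extension step for an appeal to the stated-but-unproved remark. Two minor points: your initial construction of $F_2$ in the first paragraph is never used in the conclusion (you replace it wholesale by $F_2^\flat$) and could be dropped; and you should record explicitly that $F_2^\flat(a_2)=\rho(b)=b$, so that the second non-forking condition $\gtp(F_2^\flat(a_2)/M_1, L^b)=\gtp(b/M_1, L^b)$ follows from $q$ via monotonicity.
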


\begin{proof}
    Note we can be ambiguous with where types are computed and in which AC our maps are considered embeddings by Lemma \ref{respects-types-and-embeddings-if-contains-all-high-limits}.

    Suppose $M_0, M, M_1, M_2 \in \K'$ where $M$ is a $(\lambda, \theta)$-limit model over $M_0$, and $M \lek M_l$ and $a_l \in M_l$ such that $\gtp(a_l /M, M_l)$ $\dnf$-does not fork over $M_0$ for $l = 1, 2$. Without loss of generality, $M_0 \lek^u M_1$ (replace $M_1$ by any $(\lambda, \kappa)$-limit over the original $M_1$). By weak extension, there are some $\bar{M}_1 \in \K'$ and some $a_2^* \in \bar{M}_1$ such that $M_1 \lek \bar{M}_1$, $\gtp(a_2^*/M_1, \bar{M}_1)$ extends $\gtp(a_2/M, M_2)$, and $\dnf$-does not fork over $M_0$. As $\gtp(a_2^*/M, \bar{M}_1) = \gtp(a_2/M, M_2)$, there exist $M_2^* \in \K'$ and a $\K$-embedding $g_1 : M_2 \rightarrow M_2^*$ fixing $M$ such that $\bar{M}_1 \lek M_2^*$ and $g_1(a_2) = a_2^*$.

    We have that $M$ is a $(\lambda, \theta)$-limit model over $M_0$, $M \lek^u M_1 \lek M_2^*$, $M \lek M_2^* \lek M_2^*$, $a_1, a_2^* \in M_2^*$, $a_1 \in M_1$, $\gtp(a_1/M, M_1)$ $\dnf$-does not fork over $M$ (by base monotonicity) and $\gtp(a_2^*/M, M_0)$ $\dnf$-does not fork over $M_0$. Therefore by $(\lambda, \theta)$-symmetry, there exist $L_2, L_2^* \in \K'$ where $M \lek L_2 \lek L_2^*$, and $M^*_2 \lek L_2^*$ such that $\gtp(a_1/L_2, L_2^*)$ $\dnf$-does not fork over $M_0$ (see Figure \ref{symm_app_in_symm_implies_nfap}). 

    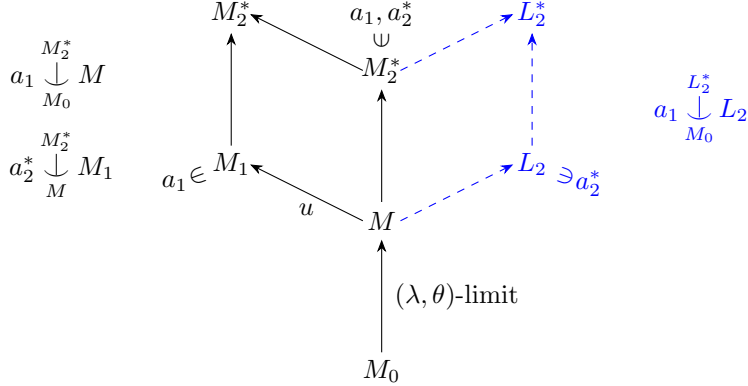
\begin{figure}[!ht]
\centering
\begin{circuitikz}
\tikzstyle{every node}=[font=\normalsize]
\node [font=\normalsize] at (7.75,4.75) {$M_0$};
\node [font=\normalsize] at (8.75,5.75) {$(\lambda, \theta)$-limit};
\node [font=\normalsize] at (6.75,6.9) {$u$};
\node [font=\normalsize] at (7.75,6.75) {$M$};
\node [font=\normalsize] at (5.75,7.5) {$M_1$};
\node [font=\normalsize] at (5,7.25) {$a_1$};
\node [font=\normalsize, rotate=20] at (5.3,7.35) {$\in$};
\node [font=\normalsize, color=blue] at (9.75,7.5) {$L_2$};
\node [font=\normalsize, color=blue] at (10.5,7.25) {$a_2^*$};
\node [font=\normalsize, rotate=160, color=blue] at (10.2,7.35) {$\in$};
\node [font=\normalsize] at (5.75,9.5) {$M_2^*$};
\node [font=\normalsize, color=blue] at (9.75,9.5) {$L_2^*$};
\node [font=\normalsize] at (7.75,8.75) {$M_2^*$};
\draw [->, >=Stealth] (7.75,5) -- (7.75,6.5);
\draw [->, >=Stealth, color=blue, dashed] (8,6.75) -- (9.5,7.5);
\draw [->, >=Stealth] (7.75,7) -- (7.75,8.5);
\draw [->, >=Stealth, color=blue, dashed] (9.75,7.75) -- (9.75,9.25);
\draw [->, >=Stealth, color=blue, dashed] (8,8.75) -- (9.5,9.5);
\draw [->, >=Stealth] (7.5,6.75) -- (6,7.5);
\draw [->, >=Stealth] (5.75,7.75) -- (5.75,9.25);
\draw [->, >=Stealth] (7.5,8.75) -- (6,9.5);

\node [font=\normalsize, color=blue] at (12,8.25) {$a_1 \dnf_{M_0}^{L_2^*} L_2$};
\node [font=\normalsize] at (3.45,8.7) {$a_1 \dnf_{M_0}^{M_2^*} M$};
\node [font=\normalsize] at (3.5,7.5) {$a_2^* \dnf_{M}^{M_2^*} M_1$};
\node [font=\normalsize] at (7.75,9.5) {$a_1, a_2^*$};
\node [font=\normalsize, rotate=90] at (7.75,9.15) {$\in$};

\end{circuitikz}

\caption{The application of symmetry in Lemma 
\ref{symmetry_implies_nfap}}
\label{symm_app_in_symm_implies_nfap}
\end{figure}

    As $\gtp(a_1/L_2, L_2^*)$ $\dnf$-does not fork over $M_0$, by weak extension there exist $L_3^* \in \K'$ and $a_1^* \in L_3^*$ such that $\gtp(a_1^*/L_2^*, L_3^*)$ $\dnf$-does not fork over $M_0$ and extends $\gtp(a_1/L_2, L_2^*)$. By type equality, there is some $N \in \K'$ and some $g_2 : L_3^* \rightarrow N$ fixing $L_2$ such that $L_2^* \lek N$ and $g_2(a_1^*) = a_1$. Figure \ref{symmetry_implies_nfap_diagram} shows the system we've constructed.
    
    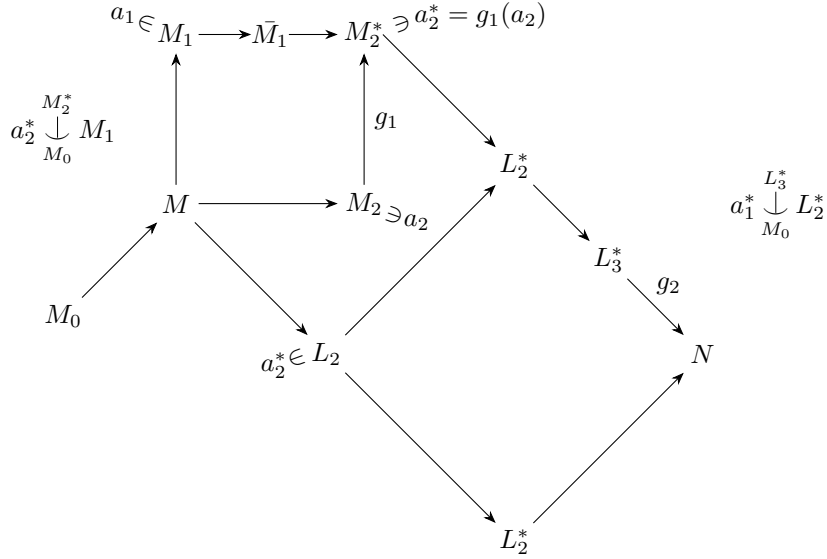
\begin{figure}[!ht]
\centering
\begin{circuitikz}
\tikzstyle{every node}=[font=\normalsize]
\node [font=\normalsize] at (2.5,8.5) {$M_0$};
\node [font=\normalsize] at (4,10) {$M$};
\node [font=\normalsize] at (4,12.25) {$M_1$};
\node [font=\normalsize] at (3.3,12.5) {$a_1$};
\node [font=\normalsize, rotate=-20] at (3.6,12.4) {$\in$};
\node [font=\normalsize] at (5.25,12.25) {$\bar{M_1}$};
\node [font=\normalsize] at (6.5,12.25) {$M_2^*$};
\node [font=\normalsize] at (8.05,12.5) {$a_2^* = g_1(a_2)$};
\node [font=\normalsize, rotate=200] at (7,12.4) {$\in$};
\node [font=\normalsize] at (6.5,10) {$M_2$};
\node [font=\normalsize] at (7.2,9.75) {$a_2$};
\node [font=\normalsize, rotate=160] at (6.9,9.85) {$\in$};
\node [font=\normalsize] at (8.5,10.5) {$L_2^*$};
\node [font=\normalsize] at (6,8) {$L_2$};
\node [font=\normalsize] at (5.3,7.85) {$a_2^*$};
\node [font=\normalsize, rotate=20] at (5.6,7.95) {$\in$};
\draw [->, >=Stealth] (2.75,8.75) -- (3.75,9.75);
\draw [->, >=Stealth] (4.3,10) -- (6.15,10);
\draw [->, >=Stealth] (4,10.25) -- (4,12);
\draw [->, >=Stealth] (6.5,10.25) -- (6.5,12);
\draw [->, >=Stealth] (4.3,12.25) -- (5,12.25);
\draw [->, >=Stealth] (5.5,12.25) -- (6.15,12.25);
\draw [->, >=Stealth] (6.75,12.25) -- (8.25,10.75);
\draw [->, >=Stealth] (4.25,9.75) -- (5.75,8.25);
\draw [->, >=Stealth] (6.25,8.25) -- (8.25,10.25);
\node [font=\normalsize] at (6.8,11.1) {$g_1$};
\node [font=\normalsize] at (9.75,9.25) {$L_3^*$};
\node [font=\normalsize] at (8.5,5.5) {$L_2^*$};
\node [font=\normalsize] at (10.55,8.9) {$g_2$};
\node [font=\normalsize] at (11,8) {$N$};
\node [font=\normalsize] at (2.5,11) {$a_2^* \dnf_{M_0}^{M_2^*} M_1$};
\node [font=\normalsize] at (12,10) {$a_1^* \dnf_{M_0}^{L_3^*} L_2^*$};

\draw [->, >=Stealth] (8.75,10.25) -- (9.5,9.5);
\draw [->, >=Stealth] (10,9) -- (10.75,8.25);
\draw [->, >=Stealth] (8.75,5.75) -- (10.75,7.75);
\draw [->, >=Stealth] (6.25,7.75) -- (8.25,5.75);

\end{circuitikz}

\caption{The construction in Lemma \ref{symmetry_implies_nfap}}
\label{symmetry_implies_nfap_diagram}
\end{figure}

    Now take $f_1:M_1 \rightarrow N$ to be the identity on $M_1$, and $f_2 : M_2 \rightarrow N$ to be $f_2 = (g_2 \upharpoonright M_2^*) \circ g_1$. Note that $f_1, f_2$ both fix $M$. Also, \begin{align*}
    	\gtp(f_1(a_1)/f_2[M_2], N) &= \gtp(a_1/g_2 \circ g_1[M_2], N)\\
    	&\subseteq \gtp(a_1/g_2[L_2^*], N) \\
    	&= g_2(\gtp(a_1^*/L_2^*, L_3^*))
    \end{align*} 
    the latter of which $\dnf$-does not fork over $M_0$ by invariance, so $\gtp(f_1(a_1)/f_2[M_2], N)$ $\dnf$-does not fork over $M_0$ by monotonicity. And \begin{align*}
        \gtp(f_2(a_2)/f_1[M_1], N) &= \gtp(g_2(g_1(a_2))/M_1, N)\\
        &= \gtp(g_2(a_2^*)/M_1, L_2^*)\\
        &= \gtp(a_2^*/M_1, M_2^*)
    \end{align*}
    The final equality holds since $a_2^* \in L_2$ and $g_2$ fixes $L_2$. By our earlier work, this type $\dnf$-does not fork over $M_0$. Thus $N, f_1, f_2$ satisfy the criteria of $(\lambda, \theta)$-weak non-forking amalgamation, as desired.
\end{proof}

\begin{question}
	Does the converse hold? More precisely, if $\K$ is an AEC where $\K_\lambda$ has AP, $\K'$ is an AC where $\Kkappalims \subseteq \K' \subseteq \K_\lambda$, and $\dnf$ is an independence relation on $\K$ satisfying weak extension and weak uniqueness, does $(\lambda, \theta)$-weak non-forking amalgamation imply $(\lambda, \theta)$-symmetry?
\end{question}

\section{Main result}\label{main-result-section}

We now state the main theorem, which is our goal for this section.

\begin{restatable}{theorem}{maintheorem}\label{main-theorem}
    Suppose $\K$ is a $\lambda$-stable AEC, where $\LS(\K) \leq \lambda$, $\K_\lambda$ has AP, and $\kappa < \lambda^+$ is regular. Let $\K'$ be an AC where $\Kkappalims \subseteq \K' \subseteq \K_\lambda$. Suppose $\dnf$ is an independence relation on $\K'$ satisfying weak uniqueness, weak existence, $\Kkappalims$-universal continuity* in $\K$, $(\geq \kappa)$-local character, and $(\lambda, \theta)$-weak non-forking amalgamation in some regular $\theta \in [\kappa, \lambda^+)$. 
    
    Let $\delta_1, \delta_2 < \lambda^+$ be limit with $\cof(\delta_l) \geq \kappa$ for $l = 1, 2$. Then for all $M, N_1, N_2 \in \K_\lambda$, if $N_l$ is $(\lambda, \delta_l)$-limit over $M$ for $l = 1, 2$, then $N_1 \underset{M}{\cong} N_2$. Moreover, if $\K_\lambda$ has JEP, then for all $N_1, N_2 \in \K_\lambda$, if $N_l$ is $(\lambda, \delta_l)$-limit for $l = 1, 2$, then $N_1 {\cong} N_2$.
\end{restatable}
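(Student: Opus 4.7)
The plan is to reduce to the case where $\delta_1, \delta_2$ are themselves regular cardinals in $[\kappa, \lambda^+)$. By Fact~\ref{cofinality-isomorphisms} any $(\lambda, \delta_l)$-limit model over $M$ is isomorphic over $M$ to a $(\lambda, \cof(\delta_l))$-limit model over $M$, so we may assume $\delta_l = \cof(\delta_l) \geq \kappa$ is regular. Symmetry then reduces us to showing: for regular $\kappa \leq \theta < \theta' < \lambda^+$, every $(\lambda, \theta)$-limit over $M$ is isomorphic over $M$ to every $(\lambda, \theta')$-limit over $M$. The ``moreover'' statement follows by adding the JEP case: given arbitrary $N_1, N_2$ without a common base, JEP provides a common $\K$-extension above which we can amalgamate both to $(\lambda, \delta_l)$-limits over a common base and then apply the over-$M$ case together with Fact~\ref{cofinality-isomorphisms}.

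For the core content, I would follow the tower-based strategy outlined in the introduction. First I would define a tower in $\K'$ to be a $\lek$-increasing continuous sequence $\calt = \langle M_i : i < \alpha \rangle$ together with singletons $\langle a_i : i+2 < \alpha\rangle$ such that $a_i \in M_{i+2}$ and $\gtp(a_i / M_{i+1}, M_{i+2})$ $\dnf$-does not fork over $M_i$. The offset $a_i \in M_{i+2}$ (rather than $M_{i+1}$) is the author's promised workaround: weak uniqueness, weak extension, and weak transitivity are only usable across a universal jump, so we need a universal extension separating the domain of the non-forking type from the base. I would then define the tower ordering $\lesst$ to require universal extensions between matching models of $\calt$ and $\calt'$ and preservation of non-forking, and define brilliant chains to be $\lesst$-increasing sequences each of whose non-universal terms is the union of the preceding $(\geq \kappa)$-cofinal segment.

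Next I would prove the tower machinery: (a) every brilliant chain has a $\lesst$-extension by a single universal tower, using weak non-forking amalgamation together with weak extension and weak uniqueness to handle one singleton at a time and then iterate; (b) brilliant chains of length of cofinality $\geq \kappa$ have unions that remain towers, using $\Kkappalims$-universal continuity$^*$ (via Lemma~\ref{uniform-cty-star-gives-high-univ-cty}) to preserve non-forking at coordinate-wise unions; (c) any brilliant chain admits a reduced extension, now a brilliant chain rather than a single tower, whose existence relies on $(\geq\kappa)$-local character to terminate the search for a reduction; (d) a brilliant chain of full towers has a full union when its length has cofinality $\geq \kappa$. With these lemmas in place the main construction proceeds by the standard Shelah--VanDieren diagonal: along a common enumeration of $\theta'$, one builds a brilliant chain $\langle \calt^j : j < \theta' \rangle$ of reduced and full towers whose underlying unions exhaust the $(\lambda, \theta')$-limit; simultaneously each $\calt^j$ is arranged to sit inside the $(\lambda, \theta)$-limit; the resulting diagonal map extends to the desired isomorphism over $M$.

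The main obstacle is exactly the one the author flags: the tower ordering is only transitive on universal towers, so at non-universal stages we lose the ability to appeal to ``the extension of a tower,'' and every construction must instead be run on a brilliant chain. This infects the reduced-extension lemma (one may have to add many towers, not just one), the fullness-preservation lemma (the union must be recognised as a \emph{chain} union, mixing universal and union steps), and the final diagonal bookkeeping. Each weak-property invocation has to be audited to confirm a universal jump is present, and in several places the offset $a_i \in M_{i+2}$ must be exploited to supply that jump. Once these combinatorial issues are handled, the argument is structurally parallel to \cite[Theorem 3.1]{bema}, but with brilliant chains replacing single towers throughout.
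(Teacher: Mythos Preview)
Your inventory of the machinery is broadly right: the offset $a_i \in M_{i+2}$, the brilliant-chain device to dodge the non-transitivity of $\lesst$, reduced chains in place of reduced towers, and preservation of fullness under high-cofinality unions are exactly the ingredients the paper develops. A few small misattributions: towers in this paper are \emph{not} required to be continuous (they typically cannot be at indices of cofinality $<\kappa$); the existence of reduced extensions (Lemma~\ref{reduced-extensions-exist}) is obtained by a club argument in $\lambda^+$, not by $(\geq\kappa)$-local character; and weak non-forking amalgamation is not used in the basic extension lemma but only in the specialised extension (Proposition~\ref{univ-tower-exts-exist-with-b}) that feeds into ``reduced implies continuous''.

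The real divergence is in your final assembly. You describe building towers inside a fixed $(\lambda,\theta)$-limit while exhausting a fixed $(\lambda,\theta')$-limit and then extending a ``diagonal map'' to an isomorphism. The paper does not do this. Instead it constructs a \emph{single} model that is simultaneously a $(\lambda,\delta_1)$-limit and a $(\lambda,\delta_2)$-limit over $M$, and then invokes Fact~\ref{cofinality-isomorphisms}. Concretely: the towers are indexed by $(\delta_2+1)\times\lambda\times\alpha_k$, the brilliant chain has length of cofinality $\delta_1$, and one interleaves reduced extensions (so the union chain is reduced, hence the final tower is continuous at the level $(\delta_2,0,0)$ by Proposition~\ref{reduced-towers-are continuous-at-high-cofinality}) with $I_0$-full extensions (so the final tower contains a cofinal $\lek^u$-increasing subsequence of length $\delta_2$). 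The top model $M^{\beta_{\delta_1}}_{(\delta_2,0,0)}$ is then a $(\lambda,\delta_1)$-limit along the chain direction and a $(\lambda,\delta_2)$-limit along the tower direction. Your ``sit inside one limit while exhausting the other'' picture would require controlling the construction to land inside prescribed models, which the tower extension lemmas do not give you; the matrix-with-union-corner approach avoids that entirely.
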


\begin{remark}\label{assume-K-prime-is-Kkappalims}
	Note if we have $\K', \dnf$ as above, then replacing them with $\Kkappalims, \dnf \upharpoonright \Kkappalims$ respectively, the hypotheses still hold.
\end{remark}

With this in mind, for this section, we assume the slightly simpler assumptions of Hypothesis \ref{main_hypothesis} throughout.

\begin{hypothesis}\label{main_hypothesis}
    $\K$ is a $\lambda$-stable AEC, where $\LS(\K) \leq \lambda$ and $\K_\lambda$ has AP, and $\kappa < \lambda^+$ is regular. $\dnf$ is an independence relation on $\Kkappalims$ satisfying weak uniqueness, weak existence, universal continuity* in $\K$, $(\geq \kappa)$-local character, and $(\lambda, \theta)$-weak non-forking amalgamation in some regular $\theta \in [\kappa, \lambda^+)$.
\end{hypothesis}

\subsection{An outline of the proof}\label{proof-outline-subsection}

Before we get into the weeds, let's outline the broad strokes of our strategy. Note by Fact \ref{cofinality-isomorphisms} it is enough to find for any $M$ a model $N$ that is both a $(\lambda, \delta_1)$-limit model and a $(\lambda, \delta_2)$-limit model over $M$.

We will define a notion of \emph{tower} $\calt$, essentially a sequence of models with nice non-forking properties, and an ordering on towers $\lesst$. We will construct a $\lesst$-chain of these towers $\langle \calt^j : j \leq \alpha \rangle$ where $\cof(\alpha) = \cof(\delta_1)$ such that 
\begin{enumerate}
    \item $\calt^\alpha = \bigcup_{j < \alpha} \calt^j$ (see Definition \ref{tower-union-def})
    \item the ordering of $\calt^j$ contains a copy of $\delta_2$
    \item $\calt^\alpha$ is universal and continuous when we restrict to the copy of $\delta_2$
\end{enumerate}

This effectively says that the chain of towers (when we throw away the models not in the copy of $\delta_2$) is a matrix of models where the final column is the union of the previous columns, is a universal sequence, and is continuous at $\delta_2$ (see Figure \ref{construction_overview}).

\begin{figure}[!ht]
\centering
\begin{circuitikz}

\def\uppersemi at (#1,#2){\def\Radius{0.75}
    
  \draw
    (\Radius + #1, #2) arc(0:180:\Radius);}

\def\sidesemi at (#1,#2){\def\Radius{0.75}
    
  \draw
    (#1, #2 - \Radius) arc(-90:90:\Radius);}

\def\fullcircle at (#1,#2){\def\Radius{0.75}
    
  \draw
    (\Radius + #1, #2) arc(0:360:\Radius);}

\def\upperright at (#1,#2){\def\Radius{0.75}
    
  \draw
    (\Radius + #1, #2) arc(0:90:\Radius);}
    
\def\upperleft at (#1,#2){\def\Radius{0.75}
    
  \draw
    (#1, \Radius + #2) arc(90:180:\Radius);}

\def\threequarter at (#1,#2){\def\Radius{0.75}
    
  \draw
    (#1, #2 - \Radius) arc(-90:180:\Radius);}

\tikzstyle{every node}=[font=\normalsize]

\fullcircle at (3.75,6.5);
\uppersemi at (3.75,7.25);
\uppersemi at (3.75,8);
\uppersemi at (3.75,8.75);
\uppersemi at (3.75,9.5);
\uppersemi at (3.75,10.25);

\uppersemi at (3.75,13.5);

\draw [short] (3,6.5) -- (3,13.5);
\draw [short] (4.5,6.5) -- (4.5,13.5);
\node [font=\normalsize] at (3.75,6.5) {$M^0_0$};
\node [font=\normalsize] at (3.75,7.5) {$M^0_1$};
\node [font=\normalsize] at (3.75,8.25) {$M^0_2$};
\node [font=\normalsize] at (3.75,9) {$M^0_3$};
\node [font=\normalsize] at (3.75,9.75) {$M^0_4$};
\node [font=\normalsize] at (3.75,10.5) {$M^0_5$};
\node [font=\normalsize] at (3.75,12) {$\vdots$};
\node [font=\normalsize] at (3.75,13.65) {$M^0_{\delta_2}$};
\node [font=\normalsize] at (3.75,14.75)    {$\calt_0$};

\sidesemi at (5 ,6.5);
\upperright at  (5 ,7.25);
\upperright at  (5 ,8);
\upperright at  (5 ,8.75);
\upperright at  (5 ,9.5);
\upperright at  (5 ,10.25);

\upperright at  (5 ,13.5);

\draw [short] (5.75,6.5) -- (5.75,13.5);
\node [font=\normalsize] at (5.15 ,6.5)  {$M^1_0$};
\node [font=\normalsize] at (5.15 ,7.5)  {$M^1_1$};
\node [font=\normalsize] at (5.15 ,8.25) {$M^1_2$};
\node [font=\normalsize] at (5.15 ,9)    {$M^1_3$};
\node [font=\normalsize] at (5.15 ,9.75) {$M^1_4$};
\node [font=\normalsize] at (5.15 ,10.5) {$M^1_5$};
\node [font=\normalsize] at (5.15,12) {$\vdots$};
\node [font=\normalsize] at (5.15 ,13.65) {$M^1_{\delta_2} $};
\node [font=\normalsize] at (5.15,14.75)    {$\calt_1$};

\sidesemi at    (6.25 ,6.5);
\upperright at  (6.25 ,7.25);
\upperright at  (6.25 ,8);
\upperright at  (6.25 ,8.75);
\upperright at  (6.25 ,9.5);
\upperright at  (6.25 ,10.25);

\upperright at  (6.25 ,13.5);

\draw [short] (7  ,6.5) -- (7  ,13.5);
\node [font=\normalsize] at (6.4 ,6.5)  {$M^2_0$};
\node [font=\normalsize] at (6.4 ,7.5)  {$M^2_1$};
\node [font=\normalsize] at (6.4 ,8.25) {$M^2_2$};
\node [font=\normalsize] at (6.4 ,9)    {$M^2_3$};
\node [font=\normalsize] at (6.4 ,9.75) {$M^2_4$};
\node [font=\normalsize] at (6.4 ,10.5) {$M^2_5$};
\node [font=\normalsize] at (6.4,12) {$\vdots$};
\node [font=\normalsize] at (6.4 ,13.65) {$M^2_{\delta_2} $};
\node [font=\normalsize] at (6.4,14.75)    {$\calt_2$};

\sidesemi at    (7.5  ,6.5);
\upperright at  (7.5  ,7.25);
\upperright at  (7.5  ,8);
\upperright at  (7.5  ,8.75);
\upperright at  (7.5  ,9.5);
\upperright at  (7.5  ,10.25);

\upperright at  (7.5  ,13.5);

\draw [short] (8.25,6.5) -- (8.25,13.5);
\node [font=\normalsize] at (7.65 ,6.5)  {$M^3_0$};
\node [font=\normalsize] at (7.65 ,7.5)  {$M^3_1$};
\node [font=\normalsize] at (7.65 ,8.25) {$M^3_2$};
\node [font=\normalsize] at (7.65 ,9)    {$M^3_3$};
\node [font=\normalsize] at (7.65 ,9.75) {$M^3_4$};
\node [font=\normalsize] at (7.65 ,10.5) {$M^3_5$};
\node [font=\normalsize] at (7.65 ,12) {$\vdots$};

\node [font=\normalsize] at (7.65 ,13.65) {$M^3_{\delta_2} $};
\node [font=\normalsize] at (7.65,14.75)    {$\calt_3$};

\sidesemi at    (8.75 ,6.5);
\upperright at  (8.75 ,7.25);
\upperright at  (8.75 ,8);
\upperright at  (8.75 ,8.75);
\upperright at  (8.75 ,9.5);
\upperright at  (8.75 ,10.25);

\upperright at  (8.75 ,13.5);

\draw [short] (9.5,6.5) -- (9.5,13.5);
\node [font=\normalsize] at (8.9 ,6.5)  {$M^4_0$};
\node [font=\normalsize] at (8.9 ,7.5)  {$M^4_1$};
\node [font=\normalsize] at (8.9 ,8.25) {$M^4_2$};
\node [font=\normalsize] at (8.9 ,9)    {$M^4_3$};
\node [font=\normalsize] at (8.9 ,9.75) {$M^4_4$};
\node [font=\normalsize] at (8.9 ,10.5) {$M^4_5$};
\node [font=\normalsize] at (8.9,12) {$\vdots$};

\node [font=\normalsize] at (8.9 ,13.65) {$M^4_{\delta_2} $};
\node [font=\normalsize] at (8.9,14.75)    {$\calt_4$};

\sidesemi at    (11.25 ,6.5);
\upperright at  (11.25 ,7.25);
\upperright at  (11.25 ,8);
\upperright at  (11.25 ,8.75);
\upperright at  (11.25 ,9.5);
\upperright at  (11.25 ,10.25);

\upperright at  (11.25 ,13.5);

\draw [short] (12,6.5) -- (12,13.5);
\node [font=\normalsize] at (10.8 ,6.5)  {$\dots$};
\node [font=\normalsize] at (10.8 ,7.6)  {$\dots$};
\node [font=\normalsize] at (10.8 ,8.35) {$\dots$};
\node [font=\normalsize] at (10.8 ,9.1)    {$\dots$};
\node [font=\normalsize] at (10.8 ,9.85) {$\dots$};
\node [font=\normalsize] at (10.8 ,10.6) {$\dots$};
\node [font=\normalsize, rotate=90] at (10.65 ,11.9) {$\ddots$};

\node [font=\normalsize] at (10.8 ,13.65) {$\dots$};
\node [font=\normalsize] at (10.8,14.75)    {$\dots$};

\sidesemi at    (12.5 ,6.5);
\upperright at  (12.5 ,7.25);
\upperright at  (12.5 ,8);
\upperright at  (12.5 ,8.75);
\upperright at  (12.5 ,9.5);
\upperright at  (12.5 ,10.25);

\upperright at  (12.5 ,13.5   );

\draw [short] (13.25,6.5) -- (13.25,13.5);
\node [font=\normalsize] at (12.65 ,6.5)  {$M^\alpha_0$};
\node [font=\normalsize] at (12.65 ,7.5)  {$M^\alpha_1$};
\node [font=\normalsize] at (12.65 ,8.25) {$M^\alpha_2$};
\node [font=\normalsize] at (12.65 ,9)    {$M^\alpha_3$};
\node [font=\normalsize] at (12.65 ,9.75) {$M^\alpha_4$};
\node [font=\normalsize] at (12.65 ,10.5) {$M^\alpha_5$};
\node [font=\normalsize] at (12.65 ,12) {$\vdots$};

\node [font=\normalsize] at (12.65 ,13.65) {$M^\alpha_{\delta_2} $};
\node [font=\normalsize] at (12.65,14.75)    {$\calt^\alpha$};


\draw [short] (3.75,5.75) -- (12.5,5.75);
\draw [short] (3.75,7.25) -- (12.5,7.25);
\draw [short] (3.75,8   ) -- (12.5,8   );
\draw [short] (3.75,8.75) -- (12.5,8.75);
\draw [short] (3.75,9.5 ) -- (12.5,9.5 );
\draw [short] (3.75,10.25)-- (12.5,10.25);
\draw [short] (3.75,11)   -- (12.5,11);

\draw [short] (3.75,14.25)-- (12.5,14.25);

\end{circuitikz}

\caption{The chain we will construct, looking only at models for rows in the `copy of $\delta_2$'}
\label{construction_overview}
\end{figure}

Using the notations in Figure \ref{construction_overview}, since the `top right' model $M^\alpha_{\delta_2}$ is the union of the top row, which the tower ordering $\lesst$ will ensure is a universal sequence, and $\cof(\alpha) = \cof(\delta_1)$, it is a $(\lambda, \delta_1)$-limit model over $M^0_{\delta_2}$, and hence over $M_0^0$. Similarly as the rightmost column is universal and continuous at $\delta_2$, $M^\alpha_{\delta_2}$ is also a $(\lambda, \delta_2)$-limit model over $M_0^0$. We will have sufficient control to ensure $M \leq M_0^0$, so this is enough.

The difficulty will come in showing this construction is possible. We summarise the machinery we will develop to do this below.

\begin{enumerate}
    \item We must show that sequences of towers can be extended, so we can `add columns' to the chain - we will show this is true of what we call \emph{brilliant chains of towers} in Subsection \ref{extending-towers-and-brilliant-chains-subsection}, and ensure the chains we build are always brilliant.
    
    \item We will show that we can take (high cofinality) unions of towers to get the final tower $\calt^\alpha$, and that these unions respect the tower ordering (see Lemma \ref{tower-union-lemma}).
    
    \item We need a way to ensure the final tower is universal. Since a union of universal chains may not be universal, we use the stronger notion of a \emph{full tower}, which is preserved by (high cofinality) unions.
    
    In fact, fullness will imply the copy of $\delta_2$ within the final tower is universal, by effectively inserting enough realisations of types in the models between those in the copy of $\delta_2$.
    
    \item We need a way to guarantee the final tower is continuous at (the copy of) $\delta_2$. We develop a stronger notion that continuity, called a \emph{reduced chain of towers}, which are preserved by high cofinality unions. We address these in Subsection \ref{reduced-subsection}. 
    
    Note we only define reduced \emph{chains} rather than towers, unlike the approaches in \cite{bema} and \cite{bovan}; this is related to us only knowing that $\lesst$ is transitive on universal towers, and how this complication affects the method of finding reduced extensions (Lemma \ref{reduced-extensions-exist}).
\end{enumerate}

\subsection{Comments on non-algebraic types, JEP, and NMM}\label{subsection-non-alg-jep-nmm}

In this subsection, we will explore the strength of the assumptions in Hypothesis \ref{main_hypothesis}. None of the results here are needed to prove Theorem \ref{main-theorem}, so it is safe for a reader in a hurry to jump to Subsection \ref{subsection-towers-and-brilliant-chains}, but we hope this sets the scene a little.

Limit models are most intuitive when we assume JEP and NMM in $\K_\lambda$ (in addition to AP). Under those assumptions, if $N$ is a limit model over $M$, then $N$ is in most senses we can imagine as much bigger than $N$ as it could possibly be - for example, $|N| \setminus |M|$ has cardinality $\lambda$, $N$ realises $\lambda$ distinct non-algebraic types over $M$, and so on. That said, neither JEP or NMM is used for the main arguments of this paper (though JEP is needed for the `moreover' part of Theorem \ref{main-theorem}, which follows quickly from the first part of the statement). This is partly because we assume our independence relation is defined on all types, rather than just non-algebraic ones - otherwise in some constructions we might run into trouble with types over maximal models, as there are no non-algebraic types to find non-forking extensions of. But without JEP and NMM, some of our intuition goes away. That said, we will show that we can assume JEP and NMM without loss of generality, if we want to, after we explore how our relation compares to a non-algebraic one.

\subsubsection{Non-algebraic independence relations, and why we won't use them}

Hypothesis \ref{main_hypothesis} rules out the possibility that our relation is defined only on non-algebraic types (in particular, local character implies that algebraic types over limit models are non-forking over a smaller model, as shown below). While one might think this could restrict the cases when our hypotheses apply, we will show in fact that if we had a relation only defined for non-algebraic types with the usual reformulation of local character, then we can extend it to a relation on all types, so in fact we have lost no generality with this assumption. We formalise this below.

\begin{definition}
	Let $\dnf$ be an independence relation on an AC $\K$. $\dnf$ is \emph{non-algebraic} if for all $M \lek N$ and $p \in \gS(N)$, if $p$ $\dnf$-does not fork over $M$, then $p$ is non-algebraic.
\end{definition}

That is, $\dnf$ is effectively a relation on non-algebraic types. Put another way, $\dnf$ is non-algebraic if and only if whenever $a \dnf_{M_0}^N M$, then $a \notin M$.

When working with a non-algebraic independence relation, most of our notions of nice properties of independence relations (uniqueness, existence, transitivity, non-forking amalgamation, and their `weak' forms, universal continuity, universal continuity*) behave exactly as we should expect with unaltered definitions. The exception is local character, which we must restrict to only hold for non-algebraic types over a union of models. This is because if we assume $\delta$-local character with the normal definition, then taking $p \in \gS(\bigcup_{i<\delta} M_i)$ to be an algebraic type, we get that $p$ $\dnf$-does not fork over $M_i$ for some $i<\delta$, which implies that $\dnf$ is \emph{not} non-algebraic. The following common reformulation of local character avoids this issue - essentially the only difference is that the local character condition need only hold for the non-algebraic types.

\begin{definition}
	Let $\K$ be an AC with non-algebraic independence relation $\dnf$. Given a limit ordinal $\delta$, we say that $\dnf$ satisfies \emph{non-algebraic $\delta$-local character} if for all $\lek^u$-increasing sequences of models $\langle M_i : i < \delta \rangle$ where $\bigcup_{i < \delta} M_i \in \K$ and $\cof(\delta) \geq \kappa$, for all non-algebraic $p \in \gS(\bigcup_{i < \delta} M_i)$, there exists $i < \delta$ such that $p$ $\dnf$-does not fork over $M_i$.
	
	Given $\kappa$ regular, we say $\dnf$ satisfies \emph{non-alebraic $(\geq \kappa)$-local character} if $\dnf$ satisfies non-algebraic $\delta$-local character for all $\delta$ with $\cof(\delta) \geq \kappa$.
\end{definition}

Given a non-algebraic independence relation, we can `fill in' the algebraic part in a simple way. We will see this retains the important properties, and transforms non-algebraic local character into our normal local character notion, under our other assumptions (see Proposition \ref{non-alg-indep-rel-properties-iff-alg}).

\begin{definition}
	Suppose $\dnf$ is a non-algebraic independence relation on an AC $\K$. Define the \emph{algebraic closure} of $\dnf$ to be the relation $\dnfb{}{}{}{}$, given by:
	
	$\dnfbold{M_0}{a}{M}{N}$ if and only if either $a \dnf_{M_0}^N M$, or $a \in M_0$.
\end{definition}

\begin{remark}
	Given a non-algebraic independence relation $\dnf$ on an AC $\K$, $\dnfb$ will be satisfy disjointness - if $p \upharpoonright M$ is non-algebraic and $p \dnfb$-does not fork over $M$, then we must have $p$ $\dnf$-does not fork over $M$, which implies that $p$ is non-algebraic also.
\end{remark}

We can also `toss away' the algebraic part of a relation, and again, normally this preserves our properties (see Proposition \ref{non-alg-indep-rel-properties-iff-alg}).

\begin{definition}
	Suppose $\dnf$ is an independence relation on an AC $\K$. Define the \emph{non-algebraic restriction} of $\dnf$ to be the relation $\dnf^{\circ}$ given by:
	
	$a \dnf^{\underset{\circ}{N}}_{M_0} M$ if and only if $a \dnf_{M_0}^N M$ and $a \notin M$.
\end{definition}

\begin{remark}
	For any independence relation $\dnf$, $\dnf^\circ$ satisfies disjointness (since there are no algebraic $\dnf^\circ$-non-forking types), and $\overset{\circ}{\overline{\left(\dnf^\circ\right)}} = \dnf^\circ$ (since we are removing all algebraic types, adding some, and then removing all of them again - the non-algebraic types are never changed). In particular, if $\dnf$ is non-algebraic, then $\dnfbold{}{}{}{\circ} = \dnf$.
\end{remark}

\begin{proposition}\label{non-alg-indep-rel-properties-iff-alg}
	Suppose $\dnf$ is an independence relation on an AC $\K'$. Then
	\begin{enumerate}
		\item \begin{enumerate}
			\item If $\dnf$ is non-algebraic, then $\dnfb{}{}{}{}$ is an independence relation (that is, it satisfies invariance, monotonicity, and base monotonicity)
			\item $\dnf^\circ$ is an independence relation
		\end{enumerate}
		
		\item \begin{enumerate}
			\item If $\dnf$ is non-algebraic and satisfies weak uniqueness, then $\dnfb$ satisfies weak uniqueness.
			\item If $\dnf$ satisfies weak uniqueness, then $\dnf^\circ$ satisfies weak uniqueness.
		\end{enumerate}  
		
		\item \begin{enumerate}
			\item If $\dnf$ is non-algebraic and satisfies weak extension, then $\dnfb{}{}{}{}$ satisfies weak extension
			\item If $\dnf$ satisfies weak extension and weak disjointness, then $\dnf^\circ$ satisfies weak extension
		\end{enumerate}
		
		\item \begin{enumerate}
			\item If $\dnf$ is non-algebraic and satisfies uniqueness, then $\dnfb{}{}{}{}$ satisfies uniqueness
			\item If $\dnf$ satisfies uniqueness, then $\dnf^\circ$ satisfies uniqueness
		\end{enumerate}
		
		\item \begin{enumerate}
			\item If $\dnf$ is non-algebraic and satisfies extension, then $\dnfb{}{}{}{}$ satisfies extension
			\item If $\dnf$ satisfies extension and disjointness, then $\dnf$ satisfies extension
		\end{enumerate}
		
		\item \begin{enumerate}
			\item If $\delta$ is a limit ordinal and $\dnf$ is non-algebraic and satisfies $\delta$-universal continuity, weak existence, and weak uniqueness, then $\dnfb{}{}{}{}$ satisfies $\delta$-universal continuity
			\item If $\delta$ is a limit ordinal and $\dnf$ satisfies $\delta$-universal continuity, then $\dnf^\circ$ satisfies $\delta$-universal continuity
		\end{enumerate}
		
		\item \begin{enumerate}
			\item If $\K$ is an AEC with sub-AC $\K'$ which respects types in $\K$, and $\dnf$ satisfies universal continuity* in $\K$, weak existence, and weak uniqueness, then $\dnfb{}{}{}{}$ satisfies universal continuity* in $\K$
			\item If $\K$ is an AEC with sub-AC $\K'$ which respects types in $\K$, and $\dnf$ satisfies universal continuity* in $\K$, weak existence, and weak uniqueness, then $\dnf^\circ$ satisfies universal continuity* in $\K$
		\end{enumerate}
		
		\item \begin{enumerate}
			\item If $\K$ is an AEC stable in $\lambda \geq \LS(\K)$, $\K_\lambda$ has AP, $\kappa \leq \theta < \lambda^+$ are regular, $\Kkappalims \subseteq \K' \subseteq \K_\lambda$, and $\dnf$ is non-algebraic and satisfies $(\lambda, \theta)$-weak non-forking amalgamation, then $\dnfb{}{}{}{}$ satisfies $(\lambda, \theta)$-weak non-forking amalgamation.
			\item If $\K$ is an AEC stable in $\lambda \geq \LS(\K)$, $\K_\lambda$ has AP, $\kappa \leq \theta < \lambda^+$ are regular, $\Kkappalims \subseteq \K' \subseteq \K_\lambda$, and $\dnf$ satisfies $(\lambda, \theta)$-weak non-forking amalgamation, then $\dnf^\circ$ satisfies $(\lambda, \theta)$-weak non-forking amalgamation.
		\end{enumerate}
		
		\item \begin{enumerate}
			\item If $\K$ is an AEC stable in $\lambda \geq \LS(\K)$, $\K_\lambda$ has AP, $\kappa < \lambda^+$ is regular, with $\Kkappalims \subseteq \K' \subseteq \K_\lambda$, and $\dnf$ is algebraic and satisfies non-forking amalgamation, then $\dnfb{}{}{}{}$ satisfies non-forking amalgamation.
			\item If $\K$ is an AEC stable in $\lambda \geq \LS(\K)$, $\K_\lambda$ has AP, $\kappa < \lambda^+$ is regular, with $\Kkappalims \subseteq \K' \subseteq \K_\lambda$, and $\dnf$ is algebraic and satisfies non-forking amalgamation, then $\dnf^\circ$ satisfies non-forking amalgamation.
		\end{enumerate}
		
		\item \label{non-alg-indep-rel-properties-iff-alg-local-char}\begin{enumerate}
			\item If $\kappa$ is regular and $\dnf$ is non-algebraic and satisfies non-algebraic $(\geq \kappa)$-local character, then $\dnfb$ satisfies $(\geq \kappa)$-local character.
			\item If $\kappa$ is regular and $\dnf$ satisfies $(\geq \kappa)$-local character, then $\dnf^\circ$ satisfies non-algebraic $(\geq \kappa)$-local character.
		\end{enumerate}
		
	\end{enumerate}
\end{proposition}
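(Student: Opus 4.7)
\emph{Proof sketch.} The proposition is a systematic unpacking of how the two operations $\dnfb$ and $\dnf^\circ$ interact with the standard independence axioms, so the plan is one uniform case analysis applied ten times rather than a single clever argument. Two guiding observations will drive everything. For the (a) parts concerning $\dnfb$: a type $p \in \gS(N)$ $\dnfb$-does not fork over $M_0$ iff either $p$ is realized in $M_0$ or $p$ $\dnf$-does not fork over $M_0$, and the algebraic case propagates trivially to every ambient model. For the (b) parts concerning $\dnf^\circ$: $\dnf^\circ$-non-forking is the conjunction of $\dnf$-non-forking and non-algebraicity, so the job reduces to verifying that the given $\dnf$-construction preserves non-algebraicity.

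With this orientation, parts (1)--(2) will follow directly from the definitions. For extension (parts (3) and (5)), in direction (a) I will either reuse the realizer sitting in $M_0$ (algebraic case) or invoke the corresponding extension axiom of $\dnf$; in direction (b) I will invoke extension of $\dnf$ and then use (weak) disjointness to ensure the extended type stays non-algebraic. Uniqueness (parts (2) and (4)) requires ruling out the mixed case in direction (a): an algebraic $\dnfb$-non-forking type restricts to an algebraic type on $M$, while a $\dnf$-non-forking type restricts to a non-algebraic type by non-algebraicity of $\dnf$ and monotonicity, so these cannot agree. The continuity parts (6) and (7) will use that a directed union of non-algebraic types is non-algebraic, and conversely that an algebraic type over $\bigcup_{i<\delta} M_i$ is realized in some $M_i$. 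Non-forking amalgamation (parts (8) and (9)) will split on which of $\gtp(a_1/M, M_1), \gtp(a_2/M, M_2)$ are algebraic: if both are algebraic, amalgamate via AP and note that the realizers land in $M_0$; if both are $\dnf$-non-forking, apply the $\dnf$ version directly; the mixed case combines AP with a $\dnf$-non-forking extension of the non-algebraic type (via weak extension) to reduce to the previous case.

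The only genuinely new content, and the motivating reason the non-algebraic reformulation of local character is worth naming separately, is part \ref{non-alg-indep-rel-properties-iff-alg-local-char}. For (a), fix a $\lek^u$-increasing sequence $\langle M_i : i < \delta \rangle$ with $\cof(\delta) \geq \kappa$ and $p \in \gS(\bigcup_{i<\delta} M_i)$, and split on whether $p$ is algebraic. If $p$ is realized by $a \in \bigcup_{i<\delta} M_i$ then $a \in M_i$ for some $i$, since a single element lies in one factor of a directed union, and then $p$ $\dnfb$-does not fork over this $M_i$ by the algebraic clause; if $p$ is non-algebraic, non-algebraic $(\geq \kappa)$-local character of $\dnf$ supplies the witnessing $M_i$ directly. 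For (b), $p$ must itself be non-algebraic (otherwise $p$ would be realized in some $M_i$, contradicting that $p \upharpoonright M_{i+1}$ is supposed to be $\dnf^\circ$-non-forking, hence non-algebraic), so $(\geq \kappa)$-local character of $\dnf$ provides $M_i$ with $p$ $\dnf$-non-forking, which upgrades to $\dnf^\circ$-non-forking using non-algebraicity of $p$. I expect the main obstacle across the whole proof to be the bookkeeping in parts (8)--(9): the mixed algebraic/non-algebraic case of non-forking amalgamation requires careful construction of embeddings $f_1, f_2$ fixing the appropriate base so that the resulting types $\gtp(f_l(a_l) / f_{3-l}[M_{3-l}], N)$ land in the desired class, but no single step is individually deep.
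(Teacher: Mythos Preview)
Your proposal is correct and follows essentially the same approach as the paper: a uniform algebraic/non-algebraic case split applied to each clause, with the (b) direction reducing to checking that non-algebraicity is preserved and the (a) direction handling the algebraic branch by tracking the realizer into $M_0$. One minor organizational difference: for (6)(a) the paper first derives weak transitivity of $\dnfb$ from (2)(a), (3)(a) and uses it in the algebraic case, whereas your outline points toward the more direct observation that if $p$ is realized by some $a\in M_i$, then $p\upharpoonright M_{i+1}$ is algebraic and by hypothesis $\dnfb$-does not fork over $M_0$, so non-algebraicity of $\dnf$ forces $a\in M_0$; either route works.
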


\begin{proof}
	All of the (b) cases are simpler, so let's deal with those first. (1), (2), (4), (6), (7), (8), (9), (10) are all immediate for (b) (all types involved will be non-algebraic, and using the properties from $\dnf$ gets the job done). For (3) and (5), just note that the extension you get from $\dnf$ will be non-algebraic by weak disjointness and disjointness respectively.
	
	Now we deal with the (a) cases. Note these all assume that $\dnf$ is non-algebraic so that $\dnfb$ is well defined. We work through the clauses (1) - (10).
	\begin{enumerate}
		\item Invariance follows from the definition of $\dnfb{}{}{}{}$ and invariance of $\dnf$. 
		
		For monotonicity, suppose $M_0 \lek M \lek M' \lek N_0 \lek N \lek N'$ and $\dnfbold{M_0}{a}{M}{N}$. If $a \in M_0$ we immediately have $\dnfbold{M_0}{a}{M'}{N}$, $\dnfbold{M_0}{a}{M}{N_0}$, $\dnfbold{M_0}{a}{M}{N'}$ by definition. If $a \notin M_0$, we have $a\dnf_{M_0}^{N} M$, so $a\dnf_{M_0}^{N} M'$, $a\dnf_{M_0}^{N_0} M$, $a\dnf_{M_0}^{N'} M$ by monotonicity of $\dnf$, which is enough. 
		
		For base monotonicity, suppose $M_0 \lek M_0' \lek M \lek N$ and $\dnfbold{M_0}{a}{M}{N}$. If $a \in M_0$, then $a \in M_0'$, so  $\dnfbold{M_0'}{a}{M}{N}$. Otherwise $a\dnf_{M_0}^{N} M$, so base monotonicity of $\dnf$ gives $a\dnf_{M_0'}^{N} M$, which is enough.
		\item Suppose $M_0 \lek^u M \lek N$ and $p, q \in S(N)$ $\dnfb{}{}{}{}$-do not fork over $M_0$ with $p \upharpoonright M = q \upharpoonright M$. There are 2 cases: if $p \upharpoonright M_0$ is algebraic, then $p$ and $q$ are both extensions of the same algebraic type, so $p = q$. If $p \upharpoonright M_0$ is not algebraic, then $p$, $q$ $\dnf$-do not fork over $M_0$. By weak uniqueness of $\dnf$, $p = q$.
		\item Suppose $M_0 \lek^u M \lek N$, $p \in \gS(M)$, and $p$ $\dnf$-does not fork over $M_0$. There are two cases: if $p \upharpoonright M_0$ is algebraic, then take $q\in \gS(N)$ to be the unique (algebraic) extension of $p$ in $\gS(N)$. By definition, $q$ $\dnfb{}{}{}{}$-does not fork over $M_0$ as $q$ is realised in $M_0$. If $p \upharpoonright M_0$ is not algebraic, then $p$ $\dnf$-does not fork over $M_0$. By weak extension of $\dnf$, there exists $q \in \gS(N)$ where $q$ $\dnf$-does not fork over $M_0$. So $q$ $\dnfb{}{}{}{}$-does not fork over $M_0$.
		\item Similar to (2), but take $M_0 = M$.
		\item Similar to (3), but take $M_0 = M$.
		\item First note that by (2), (3), and Fact \ref{weak-uniq-and-ext-implies-trans}, $\dnfb{}{}{}{}$ satisfies weak transitivity. Suppose $\langle M_i : i \leq \delta \rangle$ is a $\lek^u$-increasing sequence in $\K$, continuous at $\delta$, and that $p \in \gS(M_\delta)$ where $p \upharpoonright M_i$ $\dnf$-does not fork over $M_0$ for all $i < \delta$. There are two cases. If $p$ is algebraic, then there is some $i < \delta$ such that $p$ is realised in $M_i$. This means $p$ $\dnfb$-does not fork over $M_i$. Hence $p \upharpoonright M_{i+1}$ $\dnfb$-does not fork over $M_0$ by monotonicity. Since $M_0 \lek M_i \lek^u M_{i+1} \lek M_\delta$, by weak transitivity of $\dnfb{}{}{}{}$, we have that $p$ $\dnfb{}{}{}{}$-does not fork over $M_0$.
		
		If $p$ is non-algebraic, then $p \upharpoonright M_i$ is non-algebraic for all $i$, so $p \upharpoonright M_i$ $\dnf$-does not fork over $M_i$ for all $i<\delta$. By $\delta$-universal continuity of $\dnf$, we have that $p$ $\dnf$-does not fork over $M_0$, so $p$ $\dnfb{}{}{}{}$-does not fork over $M_0$.
		
		\item As in (6), $\dnfb{}{}{}{}$ satisfies weak transitivity. Suppose that $\delta$ is a limit ordinal, $\langle M_i : i < \delta \rangle$ is a $\lek^u$-increasing sequence of models in $\K'$, $p_i \in \gS(M_i)$ and $p_i$ $\dnfb{}{}{}{}$-does not fork over $M_0$ for all $i < \delta$ where $\langle p_i : i < \delta \rangle$ are $\subseteq$-increasing. There are 2 cases. If $p_0$ is algebraic, then let $p_\delta \in \gS(\bigcup_{i < \delta} M_i)$ be the unique (algebraic) extension of $p_0$. It is clear that this is the unique extension of $p_i$ for all $i < \delta$.
		
		If $p_0$ is not algebraic, then $p_i$ $\dnf$-does not fork over $M_0$ for all $i < \delta$. By universal continuity* for $\dnf$, there is a unique $p_\delta \in \gS(\bigcup_{i<\delta} M_i)$ extending all $p_i$ for $i < \delta$.
		
		\item Suppose $M_0 \lek M \lek M_l \lek N$ and $a_l \in M_l$ where $\gtp(a_l/M, M_l)$ $\dnfb{}{}{}{}$-does not fork over $M_0$ for $l = 1, 2$ where $M$ is a $(\lambda, \theta)$-limit model over $M_0$. There are 4 cases.
		
		If $a_1, a_2 \in M$, since $\dnf$ is non-algebraic and $\gtp(a_l/M, N_l))$ $\dnfb$-does not fork over $M_0$, $a_1, a_2 \in M_0$. Take any $N \in \K$ and $f:M_l \rightarrow N$ fixing $M$ by AP. Then $\gtp(f_l(a_l)/f_{3-l}[M_{3-l}], N)) = \gtp(a_l/f_{3-l}[M_{3-l}], N))$ $\dnfb$-does not fork over $M_0$ since $a_l \in M_0$ for $l = 1, 2$.
		
		If $a_1 \in M, a_2 \notin M$, then as above, $a_1 \in M_0$. $a_2 \notin M$ implies that $\gtp(a_2/M, M_2)$ $\dnf$-does not fork over $M_0$. Take some $M_1' \geq_{\K} M_1$ and $d \in M_1'$ where $\gtp(d/M_1, M_1')$ is the $\dnf$-non-forking extension of $\gtp(a_2/M, M_2)$. By type equality, there exist $N \geq_{\K} M_1'$ and $f : M_2 \rightarrow N$ such that $f(a_2) = d$. Then taking $f = \operatorname{id} : M_1 \rightarrow N$ and $f_2 = f : M_2 \rightarrow N$, we have that $\gtp(f_1(a_1)/f_2[M_2], N)$ $\dnfb{}{}{}{}$-does not fork over $M_0$ since $f_1(a_1) = a_1 \in M_0$, and $\gtp(f_2(a_2)/f_1[M_1], N) = \gtp(d/M_1, N)$ $\dnf$-does not fork over $M_0$, and therefore $\dnfb{}{}{}{}$-does not fork over $M_0$, as desired.
		
		If $a_1 \notin M, a_2 \in M$, do the symmetric argument to the previous one.
		
		If $a_1, a_2 \notin M$, then $\gtp(a_l/M, M_l)$ $\dnf$-does not fork over $M_0$ for $l = 1, 2$. By $(\lambda, \theta)$-weak non-forking amalgamation of $\dnf$ there exist $N \in \K$ and $f_l : M_l \rightarrow N$ fixing $M$ for $l = 1, 2$ such that $\gtp(f_l(a_l)/f_{3-l}[M_{3-l}], N))$ $\dnf$-does not fork over $M$. So for $l = 1, 2$, $\gtp(f_l(a_l)/f_{3-l}[M_{3-l}], N))$ $\dnfb{}{}{}{}$-does not fork over $M$ as required.
		
		\item The proof is as in (8), but set $M_0 = M$.
		
		\item Let $\langle M_i : i < \delta \rangle$ be $\lek^u$-increasing where $\cof(\delta) \geq \kappa$ and $p \in gS(\bigcup_{i<\delta} M_i)$. There are two cases. If $p$ is algebraic, then $p$ is realised in some $M_i$ for $i < \delta$. Therefore $p$ $\dnfb$-does not fork over $M_i$.
		
		If $p$ is non-algebraic, then by $(\geq \kappa)$-local character of $\dnf$ there exists $i < \delta$ with $p$ $\dnf$-does not fork over $M_i$. Therefore $p$ $\dnfb$-does not fork over $M_i$, as desired.
	\end{enumerate}
\end{proof}

\begin{corollary}
	Suppose $\K$ is an AEC stable in $\lambda \geq \LS(\K)$, and $\dnf$ is an independence relation on $\Kkappalims$. If Hypothesis \ref{main_hypothesis} holds for $\K, \dnf, \lambda, \kappa, \theta$, then Hypothesis \ref{main_hypothesis} holds for $\K, \dnf^\circ, \lambda, \kappa, \theta$, but with non-algebraic $(\geq \kappa)$-local character replacing $(\geq \kappa)$-local character.
	
	Conversely if $\dnf$ is non-algebraic and Hypothesis \ref{main_hypothesis} holds for $\K, \dnf, \lambda, \kappa, \theta$, but with non-algebraic $(\geq \kappa)$-local character replacing $(\geq \kappa)$-local character, then Hypothesis \ref{main_hypothesis} holds for $\dnfb$.
\end{corollary}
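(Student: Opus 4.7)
The plan is to obtain both directions as immediate consequences of Proposition \ref{non-alg-indep-rel-properties-iff-alg}, which already handles each of the six properties in Hypothesis \ref{main_hypothesis} individually. Since Hypothesis \ref{main_hypothesis} is a conjunction of: (i) being an independence relation on $\Kkappalims$, (ii) weak uniqueness, (iii) weak extension, (iv) universal continuity* in $\K$, (v) $(\geq \kappa)$-local character (or its non-algebraic variant), and (vi) $(\lambda, \theta)$-weak non-forking amalgamation, the proof reduces to verifying each clause transfers from $\dnf$ to $\dnf^\circ$ (forward direction) or from non-algebraic $\dnf$ to $\dnfb$ (converse direction).

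For the forward direction, I would apply in sequence the (b) clauses of (1), (2), (3), (7), (8), and (10) of Proposition \ref{non-alg-indep-rel-properties-iff-alg}. The only clause that is not completely automatic from Hypothesis \ref{main_hypothesis} is (3)(b), which requires weak disjointness of $\dnf$ as a side assumption in order to conclude weak extension for $\dnf^\circ$. This is precisely the obstacle I would flag; however, it is handled immediately by Lemma \ref{lemma-weak-uniq-and-ext-gives-disjoint}, since weak uniqueness, weak extension, and $(\geq \kappa)$-local character of $\dnf$ on $\Kkappalims$ together yield weak disjointness. Note (10)(b) delivers only the \emph{non-algebraic} form of local character for $\dnf^\circ$, which matches the statement of the corollary.

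For the converse direction, I would analogously apply the (a) clauses of (1), (2), (3), (7), (8), and (10). Each requires $\dnf$ to be non-algebraic so that $\dnfb$ is well defined, which is part of the hypothesis. Clause (10)(a) upgrades non-algebraic $(\geq \kappa)$-local character for $\dnf$ back to (full) $(\geq \kappa)$-local character for $\dnfb$, exactly as needed. No further side assumptions intervene, since algebraic types behave trivially under $\dnfb$ by the definition of the closure.

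The entire argument is essentially bookkeeping: the substantive work has been done in Proposition \ref{non-alg-indep-rel-properties-iff-alg}, and the only subtle point is ensuring that the hypothesis of clause (3)(b), weak disjointness, is not an additional assumption but rather a consequence of the other parts of Hypothesis \ref{main_hypothesis} via Lemma \ref{lemma-weak-uniq-and-ext-gives-disjoint}. I do not expect any genuine obstacle beyond this careful hypothesis-tracking.
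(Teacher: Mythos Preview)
Your proposal is correct and matches the paper's own proof essentially verbatim: the paper simply states that the first part follows from Lemma \ref{lemma-weak-uniq-and-ext-gives-disjoint} together with the (b) parts of Proposition \ref{non-alg-indep-rel-properties-iff-alg}, and the second part from the (a) parts. You have correctly identified weak disjointness as the only non-automatic hypothesis (needed for clause (3)(b)) and supplied it via Lemma \ref{lemma-weak-uniq-and-ext-gives-disjoint}, exactly as the paper does.
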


\begin{proof}
	The first part follows from Lemma \ref{lemma-weak-uniq-and-ext-gives-disjoint} and the (b) parts of Proposition \ref{non-alg-indep-rel-properties-iff-alg}. The second part follows from the (a) parts of Proposition \ref{non-alg-indep-rel-properties-iff-alg}
\end{proof}

So, it is just as strong of an assumption to assume a relation defined on algebraic types with full local character, as to assume one on non-algebraic types with only non-algebraic local character. We may as well assume the former then, since it allows us to work with all types, rather than focussing on non-algebraic types, which may not exist under NMM.

\subsubsection{JEP and NMM, and why we could assume them if we wanted to}

Although we never need to use JEP and NMM (in $\lambda$) in our main argument, for those that would prefer to assume them for intuition, we explain why we can add the assumptions JEP and NMM without loss of generality.

Essentially, given $M \in \K$, $\K_M$ is the biggest sub-AC of $\K$ that contains $M$ and satisfies JEP. This technique can be found in \cite[6.11]{vas17univp2}, and is based off of \cite[II.3]{sh:87b}.

\begin{definition}
	Suppose $\K$ is and AC with AP. For $M_1, M_2$, write $M_1 \sim M_2$ if and only if there exist $N \in \K$ and $\K$-embeddings $f_l : M_l \rightarrow N$. This is an equivalence relation by AP.
	
	Given $M \in \K$, let $K_M$ be the equivalence class of $M$ under $\sim$. Let $\K_M$ be the AC with underlying class $K_M$ and with substructure inherited from $\K$.
\end{definition}

\begin{proposition}\label{wlog-jep}
	Suppose $\K, \dnf, \lambda, \kappa, \theta$ satisfy Hypothesis \ref{main_hypothesis}. Then for all $M \in \K_\lambda$, $\K_M$ is an AEC with JEP and $\LS(\K_M) = \LS(\K)$, and $\K_M, \dnf \upharpoonright \K_M, \lambda, \kappa, \theta$ satisfy Hypothesis \ref{main_hypothesis}.
	
	Further, for $M, N \in M$ and $\delta < \lambda^+$ limit, $N$ is a $(\lambda, \delta)$-limit model over $M$ in $\K$ if and only if $N \in \K_M$ and $N$ is a $(\lambda, \delta)$-limit model over $M$ in $\K_M$.
\end{proposition}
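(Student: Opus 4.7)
The approach is to verify each claim directly, using that $\sim$ is transitive (by AP in $\K$) and absorbs $\K$-substructures and $\K$-extensions. The driving observation is: whenever $N' \in \K$ admits either a $\K$-embedding into, or from, some element of $\K_M$, then $N' \in \K_M$; this follows by composing with a witness of $\sim$ to $M$. This observation means $\K_M$ behaves as a closed subclass of $\K$ for essentially all constructions relevant here, so the bulk of the proof is bookkeeping rather than mathematics.

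First I would check that $\K_M$ is an AEC with $\LS(\K_M) = \LS(\K)$. Isomorphism-closure, the partial-order axioms, coherence, and the Tarski-Vaught-style axioms for chains are inherited verbatim from $\K$; the only thing to verify is that chain unions and L\"owenheim-Skolem witnesses remain in $\K_M$, which is immediate from the driving observation (a union extends $N_0 \in \K_M$; an LS witness is a $\K$-substructure of some $N \in \K_M$). JEP in $\K_M$ is the definition of $\sim$, with the common extension itself lying in $\K_M$. For the remaining ingredients of Hypothesis \ref{main_hypothesis}: $\lambda$-stability restricts since $(\K_M)_\lambda \subseteq \K_\lambda$ and types in $\K_M$ inject into types in $\K$; $\lambda$-AP restricts because $\K$-amalgams over $\K_M$-bases are in $\K_M$; and each property of $\dnf$ (weak uniqueness, weak extension, $\Kkappalims$-universal continuity*, $(\geq\kappa)$-local character, and $(\lambda,\theta)$-weak non-forking amalgamation) transfers because every model produced by a witnessing diagram over $\K_M$-inputs is a $\K$-extension of something in $\K_M$, hence in $\K_M$. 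In particular, $\dnf \upharpoonright \K_M$ is an independence relation on $(\K_M)_{(\lambda,\geq\kappa)}$, which by the ``further'' clause equals $\Kkappalims \cap K_M$.

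For the ``further'' statement, the forward direction is immediate: a witnessing chain $\langle M_i : i \leq \delta\rangle$ of a $(\lambda,\delta)$-limit over $M$ in $\K$ lies entirely in $\K_M$ (every $M_i$ extends $M$), and universality in $\K$ trivially implies universality in $\K_M$ since $(\K_M)_\lambda \subseteq \K_\lambda$. The backward direction is where the driving observation earns its keep: given $N' \in \K_\lambda$ with $M_i \lek N'$, the observation places $N' \in \K_M$, so universality of $M_{i+1}$ over $M_i$ in $\K_M$ supplies the required $\K$-embedding. The only place I would pause is to be fully explicit that every auxiliary model appearing in each witness really is $\sim$-equivalent to $M$, but the driving observation resolves this uniformly, so no serious obstacle arises.
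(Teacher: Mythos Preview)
Your proposal is correct and follows essentially the same approach as the paper: both arguments hinge on the observation that any model $\lek$-comparable to (or embeddable into/from) an element of $\K_M$ lies in $\K_M$, and then verify each axiom and each property of $\dnf$ by noting that all witnessing models stay in $\K_M$. Your treatment of the ``further'' clause is in fact more explicit than the paper's, which simply calls it immediate; your handling of the backward direction (universality in $\K_M$ implies universality in $\K$, because any $N' \in \K_\lambda$ extending $M_i$ automatically lands in $\K_M$) is exactly the point that makes it work.
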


\begin{proof}
	$\K_M$ satisfies the isomorphism axioms and coherence axiom from the same properties of $\K$. Since all elements of a chain and above a chain are related by $\sim$ to the bottom model (since if $M' \lek N'$, the identity embedding of both into $N'$ witnesses $M' \sim N'$), the chain axioms also transfer to $\K_M$ from $\K$. Similarly a model obtained from the Löwenheim–Skolem axiom will also be related by $\sim$ to the original, so this holds in $\K_M$ too with $\LS(\K_M) = \LS(\K)$. So $\K_M$ is an AEC with $\LS(\K_M) = \LS(\K)$. $\K_M$ satisfies JEP by the definition of $\sim$ and the fact $\sim$ is an equivalence relation. 
	
	The properties in Hypothesis \ref{main_hypothesis} follow immediately from the corresponding properties holding for $\K, \dnf, \kappa, \lambda, \theta$. In particular, the properties of $\dnf$ transfer because all models mentioned in the property will lie in $\K_M$ also (for example, the $N$ in the definition of non-forking amalgamation will be in $\K_M$ since the other models in the amalgam are in $\K_M$ also).
	
	The final statement is immediate (as before, the identity embedding of $M$ and $N$ into $N$ witnesses that $N \in \K_M$).
\end{proof}

Therefore, if we want to prove the base-fixing version of Theorem \ref{main-theorem} for base $M \in \K_\lambda$, it is enough to prove the theorem in the case that $\K = \K_M$. This means we can assume JEP.

Now we deal with NMM in $\K_\lambda$. By `$N$ is a maximal model', we will mean that there is no $N'$ with $N \lek N'$ and $N \neq N'$.

When NMM in $\K_\lambda$ fails (and JEP holds), the picture is simplified, since all universal and limit models over $M$ are essentially the same - all must be a maximal model containing $M$.

\begin{proposition}\label{wlog-nmm}
	Suppose $\K$ is stable in $\lambda \geq \LS(\K)$ and $\K_\lambda$ has AP and JEP. If $\K_\lambda$ does not have NMM, and $M \lek^u N_1, N_2$, then $N_1 \cong_M N_2$.
	
	Moreover, if $N_1, N_2$ are $\lambda$-limit models over $M$, then $N_1 \cong_M N_2$.
\end{proposition}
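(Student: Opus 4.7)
The plan is to use the maximal model guaranteed by the failure of NMM in $\K_\lambda$ to effectively collapse all universal extensions of $M$ into a single isomorphism type over $M$. Since maximality in $\K_\lambda$ is preserved under isomorphism, any $\K$-embedding of a maximal model lands on the entire codomain whenever the codomain lies in $\K_\lambda$.

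First I would produce a maximal model of $\K_\lambda$ sitting above $M$. By failure of NMM in $\K_\lambda$, fix some $N_0^\ast \in \K_\lambda$ that has no proper $\K$-extension inside $\K_\lambda$. Apply JEP of $\K_\lambda$ to $M$ and $N_0^\ast$ to obtain $L \in \K_\lambda$ together with $\K$-embeddings $f_1 : M \to L$ and $f_2 : N_0^\ast \to L$. Because $f_2[N_0^\ast] \lek L$ and $f_2[N_0^\ast] \cong N_0^\ast$ is maximal in $\K_\lambda$, we must have $f_2[N_0^\ast] = L$, so $L$ itself is maximal in $\K_\lambda$. Identifying $M$ with $f_1[M]$, we have produced $M \lek L$ with $L$ maximal in $\K_\lambda$.

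Next I would use universality twice. Since $M \lek^u N_1$ and $L$ is a $\K$-extension of $M$ in $\K_\lambda$, there is a $\K$-embedding $g : L \to N_1$ fixing $M$. Then $g[L] \lek N_1$, and $g[L] \cong L$ is maximal in $\K_\lambda$, so $g[L] = N_1$, i.e., $g$ is an isomorphism over $M$. The same argument gives an isomorphism $h : L \to N_2$ over $M$. Composing yields $h \circ g^{-1} : N_1 \cong_M N_2$.

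For the moreover part, it suffices to note that every $\lambda$-limit model $N$ over $M$ is in particular universal over $M$: if $\langle M_i : i \leq \delta \rangle$ witnesses $N = M_\delta$ as a limit over $M_0 = M$, then for any $N' \in \K_\lambda$ with $M \lek N'$, universality of $M_1$ over $M_0$ gives a $\K$-embedding $N' \to M_1$ fixing $M$, which composes with $M_1 \lek N$ to yield the desired embedding into $N$. Applying the first part then gives $N_1 \cong_M N_2$. I do not anticipate any serious obstacle; the only pitfall is to keep the distinction between maximality in $\K_\lambda$ and in $\K$ straight, and to remember that invariance of maximality under $\K_\lambda$-isomorphism is what makes the argument go through.
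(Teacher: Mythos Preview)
Your proof is correct and follows essentially the same approach as the paper: produce a maximal model above $M$ via JEP, embed it into each $N_l$ by universality, and observe that maximality forces these embeddings to be surjective. Your write-up is in fact slightly more explicit than the paper's, spelling out the identification of $M$ with its image under JEP and the reason limit models are universal over their base.
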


\begin{proof}
	Since $\K_\lambda$ has a maximal model $N^*$, by applying JEP there exist $N \in \K_\lambda$ with $M \lek N$ and a $\K$-embedding $f:N^* \rightarrow N$. Since $N^*$ is maximal, $N = f[N^*]$, so $N$ is maximal also.
	
	 If $M \lek^u N_l$ for $l = 1, 2$, then since $M \lek N$ there is an embedding $f_l : N \rightarrow N_l$ fixing $M$. But since $N$ is maximal, $N_l = f_l[N]$. Therefore $f_2 \circ f_1^{-1} : N_1 \rightarrow N_2$ is an isomorphism fixing $M$.
	
	For the moreover part, note that if $N$ is a limit model over $M$, then $N$ is universal over $M$.
\end{proof}

So given JEP, the case where NMM in $\K_\lambda$ fails can be excluded. Together, Proposition \ref{wlog-jep} and Proposition \ref{wlog-nmm} say that without loss of generality we may add JEP and NMM in $\K_\lambda$ to Hypothesis \ref{main_hypothesis}.

\subsection{Towers and brilliant chains}\label{subsection-towers-and-brilliant-chains}

\begin{notation}
    Suppose $I$ is a well ordering. Define
    
    \begin{align*} 
    I^{-} &= \begin{cases}
        I \setminus \{i_0\} & \text{ if $\mathrm{otp}(I) = \alpha + 1$, with final element $i_0$}\\
        I & \text{ if $I$ has 0 or limit order type.}
    \end{cases}\\
    I^{-2} &= \begin{cases}
        I \setminus \{i_0\} & \text{ if $\mathrm{otp}(I) = \alpha + 1$ where $\alpha$ is 0 or limit, with final element $i_0$}\\
        I \setminus \{i_0, i_1\} & \text{ if $\mathrm{otp}(I) = \alpha + 2$ for some $\alpha$, with final elements $i_0 < i_1$}\\
        I & \text{ if $I$ has 0 or limit order type.}
    \end{cases}
    \end{align*}
\end{notation}

That is, $I^-$ removes the final element, and $I^{-2}$ removes the final two elements of $I$, if they exist. Put another way, $I^-$ is the largest set such that for all $i \in I^-$, $i + 1 \in I$, and $I^{-2}$ is the largest set such that for all $i \in I^{-2}$, $i + 2 \in I$.

\begin{definition}
    Let $I$ be a well ordering of order type $< \mu^+$. A \emph{tower} is a sequence $\langle M_i : i \in I \rangle ^\wedge \langle a_i : i \in I^{-2} \rangle$ such that $\langle M_i : i \in I \rangle$ is a $\lek$-increasing sequence in $\Kkappalims$, $a_i \in M_{i+2}$ for each $i \in I^{-2}$, and $\gtp(a_i / M_{i+1}, M_{i+2})$ $\dnf$-does not fork over $M_i$.
\end{definition}

\begin{definition}\label{universal-strong-limit-def}
	Let $\calt = \langle M_i : i \in I \rangle ^\wedge \langle a_i : i \in I^{-2} \rangle$ be a tower.
	\begin{enumerate}
		\item $\calt$ is \emph{universal} if for all $i \in I^-$, $M_i \lek^u M_{i+1}$
		\item Given $\delta < \lambda^+$ limit, $\calt$ is a \emph{strongly $(\lambda, \delta)$-limit} if for every $i \in I$ which is not minimal in $I$, $M_i$ is $(\lambda, \delta)$-limit over $\bigcup_{r<i}M_r$.
	\end{enumerate}
\end{definition}

Note that a strongly $(\lambda, \delta)$-limit tower is also universal. Next we show that towers exist.

\begin{lemma}\label{towers-exist}
    If $M$ is a $(\lambda, \geq\kappa)$-limit model and $\delta < \lambda^+$ with $\cof(\delta) \geq \kappa$, and $I$ is a well order with $|I| < \lambda^+$ and minimal element $i_0$, then there exists a strongly $(\lambda, \delta)$-limit tower $\calt = \langle M_i : i \in I \rangle ^\wedge \langle a_i : i \in I^{-2} \rangle$ where $M_{i_0} = M$.
\end{lemma}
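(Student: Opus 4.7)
The plan is to build the models $\langle M_i : i \in I \rangle$ by transfinite recursion along $I$, then fill in the singletons $a_i$ for $i \in I^{-2}$ using weak extension and the universal property built into each successor step. Set $M_{i_0} = M$, which is in $\Kkappalims$ by hypothesis. At any stage $i \in I \setminus \{i_0\}$, assume $M_r$ is already constructed for every $r < i$, and let $N_i = \bigcup_{r < i} M_r$. Since $|I| < \lambda^+$ and each $M_r \in \K_\lambda$, the union $N_i$ has size $\lambda$, so $N_i \in \K_\lambda$. By Fact \ref{limit-models-exist}, there exists a $(\lambda, \delta)$-limit model $M_i$ over $N_i$; since $\cof(\delta) \geq \kappa$ this $M_i$ lies in $\Kkappalims$. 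This directly yields the ``strongly $(\lambda, \delta)$-limit'' clause of Definition \ref{universal-strong-limit-def}, and in particular makes $\langle M_i : i \in I \rangle$ a $\lek^u$-increasing chain, since $M_{i+1}$ is universal over $N_{i+1} \supseteq M_i$ for every $i \in I^{-}$.

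Once all the $M_i$ are in place, for each $i \in I^{-2}$ we need $a_i \in M_{i+2}$ with $\gtp(a_i / M_{i+1}, M_{i+2})$ $\dnf$-non-forking over $M_i$. Pick any $p_i \in \gS(M_i)$ that $\dnf$-does not fork over $M_i$ (which exists by weak existence, or equivalently by picking an algebraic type witnessed by an element of $M_i$, or by applying $(\geq \kappa)$-local character to a filtration of $M_i$). Since $M_i \lek^u M_{i+1}$, weak extension yields $q_i \in \gS(M_{i+1})$ extending $p_i$ with $q_i$ $\dnf$-non-forking over $M_i$. Since $M_{i+1} \lek^u M_{i+2}$, we can realise $q_i$ inside $M_{i+2}$: take any $N' \in \K_\lambda$ extending $M_{i+1}$ with some $b \in N'$ realising $q_i$, embed $g : N' \to M_{i+2}$ fixing $M_{i+1}$ using universality, and set $a_i = g(b)$. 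By invariance of Galois types and of $\dnf$, together with monotonicity, $\gtp(a_i/M_{i+1}, M_{i+2}) = q_i$, so it $\dnf$-does not fork over $M_i$, as required.

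The construction is essentially routine and there is no deep obstacle: the only points to be checked carefully are the cardinality bookkeeping (so that each $N_i$ lies in $\K_\lambda$ and Fact \ref{limit-models-exist} applies) and the verification that the realisation $a_i$ of $q_i$ inside $M_{i+2}$ preserves the non-forking property, both of which follow immediately from the hypotheses. Collecting everything gives the desired tower $\calt = \langle M_i : i \in I \rangle ^\wedge \langle a_i : i \in I^{-2} \rangle$.
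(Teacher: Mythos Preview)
Your overall strategy matches the paper's: build the $M_i$ by recursion as $(\lambda,\delta)$-limits over the union of predecessors, then choose the $a_i$ by realising a non-forking type inside $M_{i+2}$. There is, however, a small but genuine gap in how you invoke weak extension. Weak extension (Definition~\ref{def-dnf-properties}(2)) requires $M_0 \lek^u M \lek N$ with $p \in \gS(M)$ non-forking over $M_0$; in your setup the base $M_0$ and the domain of $p_i$ are both $M_i$, so the hypothesis $M_0 \lek^u M$ would demand $M_i \lek^u M_i$, which fails in general. The relation $M_i \lek^u M_{i+1}$ that you cite is the wrong universal gap for this property --- weak extension needs the wiggle room \emph{below} the domain of the type, not above it.

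The paper's proof (and essentially your third parenthetical option, read correctly) fixes this: take any $p \in \gS(M_i)$, use $(\geq\kappa)$-local character on a witnessing universal filtration of the $(\lambda,\geq\kappa)$-limit model $M_i$ to find $M' \lek^u M_i$ with $p$ $\dnf$-non-forking over $M'$; now apply weak extension with base $M'$ (the hypothesis $M' \lek^u M_i \lek M_{i+1}$ is legitimate) to get $q \in \gS(M_{i+1})$ non-forking over $M'$; finally use base monotonicity to conclude $q$ is non-forking over $M_i$. With this adjustment your argument is complete and coincides with the paper's.
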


\begin{proof}
	 Take $M_{i_0} = M$. For $i > i_0$, let $M_i$ be any $(\lambda, \delta)$-limit model over $\bigcup_{r<i} M_r$. For $i \in I^{-2}$, take any $p \in \gS(M_i)$. By $(\lambda, \geq\kappa)$-local character, since $M_i$ is a $(\lambda, \geq \kappa)$ limit model, $p$ $\dnf$-does not fork over some $M' \lek^u M_i$. By extension, there is $q \in \gS(M_{i+1})$ which $\dnf$-does not fork over $M'$. $q$ does not fork over $M_i$ by base monotonicity. As $M_{i+2}$ is universal over $M_{i+1}$, there exists $a_i \in M_{i+2}$ where $\gtp(a_i/M_{i+1}, M_{i+2}) = q$. This completes the construction.
\end{proof}

\begin{remark}
	Towers may be highly discontinuous. For example,  suppose $\kappa > \aleph_0$, and that the $(\lambda, \geq\kappa)$-limit models (which we will later prove are all isomorphic) are \emph{not} isomorphic to the $(\lambda, \aleph_0)$-limit model (this is possible by \cite[5.1]{bema}, or for a more concrete example, when $\K$ is the elementary class of a strictly stable theory by \cite[3.4]{beard25}). Given a universal tower $\calt = \langle M_i : i \in I \rangle ^\wedge \langle a_i : i \in I^{-2} \rangle$, at any $i \in I$ with $\cof(i) = \aleph_0$, we cannot possibly have $M_i = \bigcup_{r < i} M_r$, since the former is a $(\lambda, \geq \kappa)$-limit model, and the latter is a $(\lambda, \aleph_0)$-limit model. We can however always build towers that are universal and continuous only at $i \in I$ where $\cof_I(i) \geq \kappa$ - just take unions at such $i \in I$ in the construction from Lemma \ref{towers-exist}.
\end{remark}

\begin{definition}
    Suppose $\calt = \langle M_i : i \in I \rangle ^\wedge \langle a_i : i \in I^{-2} \rangle$ is a tower, and $I_0 \subseteq I$ such that for all $i \in I_0^{-}$, $i +_I 1 = i +_{I_0} 1$. Then define $\calt \upharpoonright I_0 = \langle M_i : i \in I_0 \rangle ^\wedge \langle a_i : i \in I_0^{-2} \rangle$.
\end{definition}

\begin{remark}
    Since $+_I$ and $+_{I_0}$ agree enough on $I_0$, $\calt \upharpoonright I_0$ is a tower.
\end{remark}

\begin{definition}\label{tower_ordering_def}
    We define an ordering $\lesst$ on towers as follows: given towers $\calt = \langle M_i : i \in I \rangle ^\wedge \langle a_i : i \in I^{-2} \rangle$ and $\calt' = \langle M'_i : i \in I' \rangle ^\wedge \langle a_i' : i \in (I')^{-2} \rangle$, $\calt \lesst \calt'$ if and only if 
    \begin{enumerate}
        \item $I \subseteq I'$ (as a subordering)
        \item $M_i \lek^u M_i'$ for all $i \in I$
        \item $a_i = a_i'$ for all $i \in I^{-2}$
        \item $\gtp(a_i/M_{i+_I 1}', M_{i+_I 2}')$ $\dnf$-does not fork over $M_i$ for all $i \in I^{-2}$.
        \item For all $i \in I^-$, $i +_{I} 1 = i +_{I'} 1$.
    \end{enumerate}
\end{definition}

\begin{remark}
    Since all the towers we discuss will be comparable to each other, we can use $a_i$ to denote the singletons in every tower unambiguously (rather than introducing them as $a_i'$ each time).
\end{remark}

\begin{lemma}\label{transitivity-on-universal}
    If $\calt^1 \lesst \calt^2$ and $\calt^2 \lesst \calt^3$ where $\calt^2$ is universal, then $\calt^1 \lesst \calt^3$.
\end{lemma}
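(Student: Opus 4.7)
The plan is to verify the five clauses of Definition \ref{tower_ordering_def} for $\calt^1 \lesst \calt^3$ one at a time, writing $\calt^l = \langle M_i^l : i \in I^l \rangle^\wedge \langle a_i : i \in (I^l)^{-2} \rangle$ for $l = 1, 2, 3$. Clauses (1) and (3) are trivial from transitivity of $\subseteq$ and equality of the named singletons. Clause (2), $M_i^1 \lek^u M_i^3$, is immediate from $M_i^1 \lek^u M_i^2 \lek^u M_i^3$ and the standard fact that a universal extension of a universal extension is universal. For clause (5), fix $i \in (I^1)^-$. Clause (5) of $\calt^1 \lesst \calt^2$ gives $i +_{I^1} 1 = i +_{I^2} 1 \in I^1 \subseteq I^2$, so $i \in (I^2)^-$, and then clause (5) of $\calt^2 \lesst \calt^3$ gives $i +_{I^2} 1 = i +_{I^3} 1$, as needed. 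A similar chase shows that the same indices $j := i+1$ and $k := i+2$ work in all three orderings for $i \in (I^1)^{-2}$.

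The main content, and the step where the universality of $\calt^2$ is used, is clause (4). Fix $i \in (I^1)^{-2}$, and set $p := \gtp(a_i / M_j^3, M_k^3) \in \gS(M_j^3)$, where $j = i + 1$ and $k = i+2$ agree across all three orderings by the previous paragraph. From $\calt^2 \lesst \calt^3$ we get that $p$ $\dnf$-does not fork over $M_i^2$. From $\calt^1 \lesst \calt^2$ we have that $\gtp(a_i / M_j^2, M_k^2)$ $\dnf$-does not fork over $M_i^1$; since $a_i \in M_k^2 \lek M_k^3$, by monotonicity this type coincides with $p \upharpoonright M_j^2$, so $p \upharpoonright M_j^2$ $\dnf$-does not fork over $M_i^1$.

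The setup is now exactly the hypothesis of weak transitivity applied to the chain $M_i^1 \lek M_i^2 \lek^u M_j^2 \lek M_j^3$: the crucial middle inclusion $M_i^2 \lek^u M_j^2 = M_{i+1}^2$ is precisely where the universality of $\calt^2$ (Definition \ref{universal-strong-limit-def}(1)) is invoked. Since $\dnf$ satisfies weak uniqueness and weak extension by Hypothesis \ref{main_hypothesis}, weak transitivity holds by Fact \ref{weak-uniq-and-ext-implies-trans}, and we conclude that $p$ $\dnf$-does not fork over $M_i^1$, verifying clause (4). This completes all five clauses, giving $\calt^1 \lesst \calt^3$. The only nontrivial step is clause (4), and its only obstacle is the need for the universal gap $M_i^2 \lek^u M_{i+1}^2$; without $\calt^2$ being universal, weak transitivity cannot be applied and transitivity of $\lesst$ genuinely can fail, which is why the hypothesis on $\calt^2$ is necessary.
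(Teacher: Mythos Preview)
Your proof is correct and follows essentially the same approach as the paper: both verify clauses (1), (2), (3), (5) by transitivity of the underlying relations, and both handle clause (4) by applying weak transitivity to the chain $M_i^1 \lek M_i^2 \lek^u M_{i+1}^2 \lek M_{i+1}^3$, with the universal step supplied by universality of $\calt^2$. Your write-up is slightly more explicit about the index-chasing in clause (5) and adds a remark on why the universality hypothesis is needed, but the argument is the same.
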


\begin{proof}
    Say $\calt^j = \langle M_i^j : i \in I^j \rangle ^\wedge \langle a_i : i \in (I^j)^{-2} \rangle$ for $j = 1, 2, 3$. We check the conditions (1) to (5) from Definition \ref{tower_ordering_def}. Conditions (1), (2), (3), (5) follow immediately from transitivity of $\subseteq, \lek^u, =$, and $=$ respectively.
    
    For condition (4) of Definition \ref{tower_ordering_def}, let $i \in (I^1)^{-2}$. By (5), $i +_I 1$ and $i +_I 2$ are the same in $I = I^1, I^2, I^3$, so we may unambiguously use $+$ for all three. Take $p = \gtp(a_i / M^3_{i + 1}, M^3_{i + 2})$. By $\calt^1 \lesst \calt^2$, we know $p \upharpoonright M^2_{i+ 1}$ $\dnf$-does not fork over $M^1_i$. By $\calt^2 \lesst \calt^3$, we know that $p$ $\dnf$-does not fork over $M^2_i$. Note $M_i^1 \lek M_i^2 \lek^u M_{i+ 1}^2 \lek M_{i+1}^3$ (critically, the universal extension follows from $\calt^2$ being universal). So by weak transitivity, $p$ $\dnf$-does not fork over $M_i^1$ as desired.
\end{proof}

So the tower ordering is a partial ordering on universal towers. However, it is not clear if the ordering is transitive when the middle tower $\calt^2$ is not universal. So we emphasise the cases where this happens:

\begin{definition}
    A \emph{$\lesst$-chain of towers} is a sequence of towers $\langle \calt^j : j < \alpha \rangle$ where $\alpha < \lambda^+$ and for all $j < j' < \alpha$, $\calt^j \lesst \calt^{j'}$.
\end{definition}

\begin{figure}[!ht]
\centering
\begin{circuitikz}

\def\uppersemi at (#1,#2){\def\Radius{0.75}
    
  \draw
    (\Radius + #1, #2) arc(0:180:\Radius);}

\def\sidesemi at (#1,#2){\def\Radius{0.75}
    
  \draw
    (#1, #2 - \Radius) arc(-90:90:\Radius);}

\def\fullcircle at (#1,#2){\def\Radius{0.75}
    
  \draw
    (\Radius + #1, #2) arc(0:360:\Radius);}

\def\upperright at (#1,#2){\def\Radius{0.75}
    
  \draw
    (\Radius + #1, #2) arc(0:90:\Radius);}
    
\def\upperleft at (#1,#2){\def\Radius{0.75}
    
  \draw
    (#1, \Radius + #2) arc(90:180:\Radius);}

\def\threequarter at (#1,#2){\def\Radius{0.75}
    
  \draw
    (#1, #2 - \Radius) arc(-90:180:\Radius);}

\tikzstyle{every node}=[font=\normalsize]

\fullcircle at (3.75,6.5);
\uppersemi at (3.75,7.25);
\uppersemi at (3.75,8);
\uppersemi at (3.75,8.75);
\uppersemi at (3.75,9.5);
\uppersemi at (3.75,10.25);
\uppersemi at (3.75,10.8);
\uppersemi at (3.75,11.15);
\uppersemi at (3.75,11.45);
\uppersemi at (3.75,11.6);
\uppersemi at (3.75,11.68);
\uppersemi at (3.75,11.72);
\uppersemi at (3.75,11.75);

\upperleft at  (3.75 ,12.5);
\upperleft at  (3.75 ,13.25);
\upperleft at  (3.75 ,14);
\upperleft at  (3.75 ,14.55);
\upperleft at  (3.75 ,14.9);
\upperleft at  (3.75 ,15.20);
\upperleft at  (3.75 ,15.35);
\upperleft at  (3.75 ,15.42);
\upperleft at  (3.75 ,15.47);
\upperleft at  (3.75 ,15.5);

\uppersemi at (3.75,17);
\uppersemi at (3.75,17.75);
\uppersemi at (3.75,18.5);
\uppersemi at (3.75,19.25);

\draw [short] (3,6.5) -- (3,19.25);
\draw [short] (4.5,6.5) -- (4.5,19.25);
\node [font=\normalsize] at (3.75,6.5) {$M^0_0$};
\node [font=\normalsize] at (3.75,7.5) {$M^0_1$};
\node [font=\normalsize] at (3.75,8.25) {$M^0_2$};
\node [font=\normalsize] at (3.75,9) {$M^0_3$};
\node [font=\normalsize] at (3.75,9.75) {$M^0_4$};
\node [font=\normalsize] at (3.75,10.5) {$M^0_5$};
\node [font=\normalsize] at (3.75,17.25) {$M^0_\omega$};
\node [font=\normalsize] at (3.75,18) {$M^0_{\omega + 1}$};
\node [font=\normalsize] at (3.75,18.75) {$M^0_{\omega + 2}$};
\node [font=\normalsize] at (3.75,19.5) {$M^0_{\omega + 3}$};
\node [font=\normalsize] at (3.75,21) {$\calt_0$};

\node [font=\normalsize] at (2.5,8.25) {$a_0$};
\node [font=\normalsize] at (2.5,9) {$a_1$};
\node [font=\normalsize] at (2.5,9.75) {$a_2$};
\node [font=\normalsize] at (2.5,10.5) {$a_3$};
\node [font=\normalsize] at (2.5,11.1) {$a_4$};
\node [font=\normalsize] at (2.5,11.45) {$a_5$};
\node [font=\normalsize] at (2.5,18.75) {$a_{\omega}$};
\node [font=\normalsize] at (2.5,19.5) {$a_{\omega+1}$};

\node [font=\normalsize] at (3.2,8.25) {$\bullet$};
\node [font=\normalsize] at (3.2,9)    {$\bullet$};
\node [font=\normalsize] at (3.2,9.75) {$\bullet$};
\node [font=\normalsize] at (3.2,10.5) {$\bullet$};
\node [font=\normalsize] at (3.2,11.1) {$\bullet$};
\node [font=\normalsize] at (3.2,11.45) {$\bullet$};
\node [font=\normalsize] at (3.2,18.75){$\bullet$};
\node [font=\normalsize] at (3.2,19.5) {$\bullet$};

\sidesemi at (5 ,6.5);
\upperright at  (5 ,7.25);
\upperright at  (5 ,8);
\upperright at  (5 ,8.75);
\upperright at  (5 ,9.5);
\upperright at  (5 ,10.25);
\upperright at  (5 ,10.8);
\upperright at  (5 ,11.15);
\upperright at  (5 ,11.45);
\upperright at  (5 ,11.6);
\upperright at  (5 ,11.68);
\upperright at  (5 ,11.72);
\upperright at  (5 ,11.75);

\upperright at  (5 ,17);
\upperright at  (5 ,17.75);
\upperright at  (5 ,18.5);
\upperright at  (5 ,19.25);

\draw [short] (5.75,6.5) -- (5.75,19.25);
\node [font=\normalsize] at (5.15 ,6.5)  {$M^1_0$};
\node [font=\normalsize] at (5.15 ,7.5)  {$M^1_1$};
\node [font=\normalsize] at (5.15 ,8.25) {$M^1_2$};
\node [font=\normalsize] at (5.15 ,9)    {$M^1_3$};
\node [font=\normalsize] at (5.15 ,9.75) {$M^1_4$};
\node [font=\normalsize] at (5.15 ,10.5) {$M^1_5$};
\node [font=\normalsize] at (5.15 ,17.25) {$M^1_\omega $};
\node [font=\normalsize] at (5.15 ,18)    {$M^1_{\omega + 1}$};
\node [font=\normalsize] at (5.15 ,18.75) {$M^1_{\omega + 2}$};
\node [font=\normalsize] at (5.15 ,19.5)  {$M^1_{\omega + 3}$};
\node [font=\normalsize] at (5.15,21)    {$\calt_1$};

\sidesemi at    (6.25 ,6.5);
\upperright at  (6.25 ,7.25);
\upperright at  (6.25 ,8);
\upperright at  (6.25 ,8.75);
\upperright at  (6.25 ,9.5);
\upperright at  (6.25 ,10.25);
\upperright at  (6.25 ,10.8);
\upperright at  (6.25 ,11.15);
\upperright at  (6.25 ,11.45);
\upperright at  (6.25 ,11.6);
\upperright at  (6.25 ,11.68);
\upperright at  (6.25 ,11.72);
\upperright at  (6.25 ,11.75);

\upperright at  (6.25 ,17);
\upperright at  (6.25 ,17.75);
\upperright at  (6.25 ,18.5);
\upperright at  (6.25 ,19.25);

\draw [short] (7  ,6.5) -- (7  ,19.25);
\node [font=\normalsize] at (6.4 ,6.5)  {$M^2_0$};
\node [font=\normalsize] at (6.4 ,7.5)  {$M^2_1$};
\node [font=\normalsize] at (6.4 ,8.25) {$M^2_2$};
\node [font=\normalsize] at (6.4 ,9)    {$M^2_3$};
\node [font=\normalsize] at (6.4 ,9.75) {$M^2_4$};
\node [font=\normalsize] at (6.4 ,10.5) {$M^2_5$};
\node [font=\normalsize] at (6.4 ,17.25) {$M^2_\omega $};
\node [font=\normalsize] at (6.4 ,18)    {$M^2_{\omega + 1}$};
\node [font=\normalsize] at (6.4 ,18.75) {$M^2_{\omega + 2}$};
\node [font=\normalsize] at (6.4 ,19.5)  {$M^2_{\omega + 3}$};
\node [font=\normalsize] at (6.4,21)    {$\calt_2$};

\sidesemi at    (7.5  ,6.5);
\upperright at  (7.5  ,7.25);
\upperright at  (7.5  ,8);
\upperright at  (7.5  ,8.75);
\upperright at  (7.5  ,9.5);
\upperright at  (7.5  ,10.25);
\upperright at  (7.5  ,10.8);
\upperright at  (7.5  ,11.15);
\upperright at  (7.5  ,11.45);
\upperright at  (7.5  ,11.6);
\upperright at  (7.5  ,11.68);
\upperright at  (7.5  ,11.72);
\upperright at  (7.5  ,11.75);

\upperright at  (7.5  ,12.5);
\upperright at  (7.5  ,13.25);
\upperright at  (7.5  ,14);
\upperright at  (7.5  ,14.55);
\upperright at  (7.5  ,14.9);
\upperright at  (7.5  ,15.20);
\upperright at  (7.5  ,15.35);
\upperright at  (7.5  ,15.42);
\upperright at  (7.5  ,15.47);
\upperright at  (7.5  ,15.5);

\upperright at  (7.5  ,17);
\upperright at  (7.5  ,17.75);
\upperright at  (7.5  ,18.5);
\upperright at  (7.5  ,19.25);

\draw [short] (8.25,6.5) -- (8.25,19.25);
\node [font=\normalsize] at (7.65 ,6.5)  {$M^3_0$};
\node [font=\normalsize] at (7.65 ,7.5)  {$M^3_1$};
\node [font=\normalsize] at (7.65 ,8.25) {$M^3_2$};
\node [font=\normalsize] at (7.65 ,9)    {$M^3_3$};
\node [font=\normalsize] at (7.65 ,9.75) {$M^3_4$};
\node [font=\normalsize] at (7.65 ,10.5) {$M^3_5$};

\node [font=\normalsize] at (7.65 ,12.85) {$M^3_{\frac{\omega}{2}} $};
\node [font=\normalsize] at (7.65 ,13.6)    {$M^3_{\frac{\omega}{2} + 1}$};
\node [font=\normalsize] at (7.65 ,14.35) {$M^3_{\frac{\omega}{2} + 2}$};

\node [font=\normalsize] at (7.65 ,17.25) {$M^3_\omega $};
\node [font=\normalsize] at (7.65 ,18)    {$M^3_{\omega + 1}$};
\node [font=\normalsize] at (7.65 ,18.75) {$M^3_{\omega + 2}$};
\node [font=\normalsize] at (7.65 ,19.5)  {$M^3_{\omega + 3}$};
\node [font=\normalsize] at (7.65,21)    {$\calt_3$};

\node [font=\normalsize] at (6.4 ,14.35) {$a_{\frac{\omega}{2}} $};
\node [font=\normalsize] at (6.4 ,15.1)    {$a_{\frac{\omega}{2} + 1}$};

\node [font=\normalsize] at (7.1 ,14.35) {$\bullet$};
\node [font=\normalsize] at (7.1 ,15.1)  {$\bullet$};
\node [font=\normalsize] at (7.1 ,15.5)  {$\bullet$};

\sidesemi at    (8.75 ,6.5);
\upperright at  (8.75 ,7.25);
\upperright at  (8.75 ,8);
\upperright at  (8.75 ,8.75);
\upperright at  (8.75 ,9.5);
\upperright at  (8.75 ,10.25);
\upperright at  (8.75 ,10.8);
\upperright at  (8.75 ,11.15);
\upperright at  (8.75 ,11.45);
\upperright at  (8.75 ,11.6);
\upperright at  (8.75 ,11.68);
\upperright at  (8.75 ,11.72);
\upperright at  (8.75 ,11.75);

\upperright at  (8.75 ,12.5);
\upperright at  (8.75 ,13.25);
\upperright at  (8.75 ,14);
\upperright at  (8.75 ,14.55);
\upperright at  (8.75 ,14.9);
\upperright at  (8.75 ,15.20);
\upperright at  (8.75 ,15.35);
\upperright at  (8.75 ,15.42);
\upperright at  (8.75 ,15.47);
\upperright at  (8.75 ,15.5);

\upperright at  (8.75 ,17);
\upperright at  (8.75 ,17.75);
\upperright at  (8.75 ,18.5);
\upperright at  (8.75 ,19.25);

\draw [short] (9.5,6.5) -- (9.5,19.25);
\node [font=\normalsize] at (8.9 ,6.5)  {$M^4_0$};
\node [font=\normalsize] at (8.9 ,7.5)  {$M^4_1$};
\node [font=\normalsize] at (8.9 ,8.25) {$M^4_2$};
\node [font=\normalsize] at (8.9 ,9)    {$M^4_3$};
\node [font=\normalsize] at (8.9 ,9.75) {$M^4_4$};
\node [font=\normalsize] at (8.9 ,10.5) {$M^4_5$};

\node [font=\normalsize] at (8.9 ,12.85) {$M^4_{\frac{\omega}{2}} $};
\node [font=\normalsize] at (8.9 ,13.6)    {$M^4_{\frac{\omega}{2} + 1}$};
\node [font=\normalsize] at (8.9 ,14.35) {$M^4_{\frac{\omega}{2} + 2}$};

\node [font=\normalsize] at (8.9 ,17.25) {$M^4_\omega $};
\node [font=\normalsize] at (8.9 ,18)    {$M^4_{\omega + 1}$};
\node [font=\normalsize] at (8.9 ,18.75) {$M^4_{\omega + 2}$};
\node [font=\normalsize] at (8.9 ,19.5)  {$M^4_{\omega + 3}$};
\node [font=\normalsize] at (8.9,21)    {$\calt_4$};

\sidesemi at    (11.25 ,6.5);
\upperright at  (11.25 ,7.25);
\upperright at  (11.25 ,8);
\upperright at  (11.25 ,8.75);
\upperright at  (11.25 ,9.5);
\upperright at  (11.25 ,10.25);
\upperright at  (11.25 ,10.8);
\upperright at  (11.25 ,11.15);
\upperright at  (11.25 ,11.45);
\upperright at  (11.25 ,11.6);
\upperright at  (11.25 ,11.68);
\upperright at  (11.25 ,11.72);
\upperright at  (11.25 ,11.75);

\upperright at  (11.25 ,12.5);
\upperright at  (11.25 ,13.25);
\upperright at  (11.25 ,14);
\upperright at  (11.25 ,14.55);
\upperright at  (11.25 ,14.9);
\upperright at  (11.25 ,15.20);
\upperright at  (11.25 ,15.35);
\upperright at  (11.25 ,15.42);
\upperright at  (11.25 ,15.47);
\upperright at  (11.25 ,15.5);

\upperright at  (11.25 ,17);
\upperright at  (11.25 ,17.75);
\upperright at  (11.25 ,18.5);
\upperright at  (11.25 ,19.25);

\draw [short] (12,6.5) -- (12,19.25);
\node [font=\normalsize] at (10.8 ,6.5)  {$\dots$};
\node [font=\normalsize] at (10.8 ,7.5)  {$\dots$};
\node [font=\normalsize] at (10.8 ,8.25) {$\dots$};
\node [font=\normalsize] at (10.8 ,9)    {$\dots$};
\node [font=\normalsize] at (10.8 ,9.75) {$\dots$};
\node [font=\normalsize] at (10.8 ,10.5) {$\dots$};

\node [font=\normalsize] at (10.8 ,12.85) {$\dots$};
\node [font=\normalsize] at (10.8 ,13.6)  {$\dots$};
\node [font=\normalsize] at (10.8 ,14.35) {$\dots$};

\node [font=\normalsize] at (10.8 ,17.25) {$\dots$};
\node [font=\normalsize] at (10.8 ,18)    {$\dots$};
\node [font=\normalsize] at (10.8 ,18.75) {$\dots$};
\node [font=\normalsize] at (10.8 ,19.5)  {$\dots$};
\node [font=\normalsize] at (10.8,21)    {$\dots$};

\sidesemi at    (12.5 ,6.5);
\upperright at  (12.5 ,7.25);
\upperright at  (12.5 ,8);
\upperright at  (12.5 ,8.75);
\upperright at  (12.5 ,9.5);
\upperright at  (12.5 ,10.25);
\upperright at  (12.5 ,10.8 );
\upperright at  (12.5 ,11.15);
\upperright at  (12.5 ,11.45);
\upperright at  (12.5 ,11.6 );
\upperright at  (12.5 ,11.68);
\upperright at  (12.5 ,11.72);
\upperright at  (12.5 ,11.75);

\upperright at  (12.5 ,12.5 );
\upperright at  (12.5 ,13.25);
\upperright at  (12.5 ,14   );
\upperright at  (12.5 ,14.55);
\upperright at  (12.5 ,14.9 );
\upperright at  (12.5 ,15.20);
\upperright at  (12.5 ,15.35);
\upperright at  (12.5 ,15.42);
\upperright at  (12.5 ,15.47);
\upperright at  (12.5 ,15.5 );

\upperright at  (12.5 ,17   );
\upperright at  (12.5 ,17.75);
\upperright at  (12.5 ,18.5 );
\upperright at  (12.5 ,19.25);

\draw [short] (13.25,6.5) -- (13.25,19.25);
\node [font=\normalsize] at (12.65 ,6.5)  {$M^\alpha_0$};
\node [font=\normalsize] at (12.65 ,7.5)  {$M^\alpha_1$};
\node [font=\normalsize] at (12.65 ,8.25) {$M^\alpha_2$};
\node [font=\normalsize] at (12.65 ,9)    {$M^\alpha_3$};
\node [font=\normalsize] at (12.65 ,9.75) {$M^\alpha_4$};
\node [font=\normalsize] at (12.65 ,10.5) {$M^\alpha_5$};

\node [font=\normalsize] at (12.65 ,12.85) {$M^\alpha_{\frac{\omega}{2}} $};
\node [font=\normalsize] at (12.65 ,13.6)    {$M^\alpha_{\frac{\omega}{2} + 1}$};
\node [font=\normalsize] at (12.65 ,14.35) {$M^\alpha_{\frac{\omega}{2} + 2}$};

\node [font=\normalsize] at (12.65 ,17.25) {$M^\alpha_\omega $};
\node [font=\normalsize] at (12.65 ,18)    {$M^\alpha_{\omega + 1}$};
\node [font=\normalsize] at (12.65 ,18.75) {$M^\alpha_{\omega + 2}$};
\node [font=\normalsize] at (12.65 ,19.5)  {$M^\alpha_{\omega + 3}$};
\node [font=\normalsize] at (12.65,21)    {$\calt^\alpha$};


\draw [short] (3.75,5.75) -- (12.5,5.75);
\draw [short] (3.75,7.25) -- (12.5,7.25);
\draw [short] (3.75,8   ) -- (12.5,8   );
\draw [short] (3.75,8.75) -- (12.5,8.75);
\draw [short] (3.75,9.5 ) -- (12.5,9.5 );
\draw [short] (3.75,10.25)-- (12.5,10.25);
\draw [short] (3.75,11)   -- (12.5,11);
\draw [short] (3.75,11.55)   -- (12.5,11.55);
\draw [short] (3.75,11.9 )   -- (12.5,11.9 );
\draw [short] (3.75,12.2 )   -- (12.5,12.2 );
\draw [short] (3.75,12.35)   -- (12.5,12.35);
\draw [short] (3.75,12.43)   -- (12.5,12.43);
\draw [short] (3.75,12.47)   -- (12.5,12.47);
\draw [short] (3.75,12.5 )   -- (12.5,12.5 );

\draw [short] (3.75,13.25)-- (12.5,13.25);
\draw [short] (3.75,14   )-- (12.5,14   );
\draw [short] (3.75,14.75)-- (12.5,14.75);
\draw [short] (3.75,15.3 )-- (12.5,15.3 );
\draw [short] (3.75,15.65)-- (12.5,15.65);
\draw [short] (3.75,15.95)-- (12.5,15.95);
\draw [short] (3.75,16.10)-- (12.5,16.10);
\draw [short] (3.75,16.17)-- (12.5,16.17);
\draw [short] (3.75,16.22)-- (12.5,16.22);
\draw [short] (3.75,16.25)-- (12.5,16.25);

\draw [short] (3.75,17.75)-- (12.5,17.75);
\draw [short] (3.75,18.5 )-- (12.5,18.5 );
\draw [short] (3.75,19.25)-- (12.5,19.25);
\draw [short] (3.75,20   )-- (12.5,20   );

\end{circuitikz}

\caption{A $\lesst$-chain of length $\alpha+1$. Note the singletons appear two levels up in the tower, and we may only insert new levels after an infinite chain of models because of condition (5) of \ref{tower_ordering_def}. $\calt^0, \calt^1$, and $\calt^2$ are ordered by $\omega + 4$, whereas $\calt^3, \calt^4$, and so on are ordered by $\{0, 1, 2, 3, \dots, \frac{\omega}{2}, \frac{\omega}{2} + 1, \frac{\omega}{2} + 2, \dots, \omega, \omega+1, \omega+2, \omega+3\}$. That is, we inserted $\omega$-many new levels just under level $\omega$.}
\label{chain_intuition_diagram}
\end{figure}

The following notations will make it easier to discuss appending towers and $\lesst$-chains of towers to the end of a $\lesst$-chain of towers.

\begin{notation}
	If $\calc = \langle \calt^j : j < \alpha \rangle$ is a $\lesst$-chain of towers, $\calt'$ a tower, and $\calc' = \langle \bar{\calt}^j : j < \beta \rangle$ a second $\lesst$-chain of towers, then 
	\begin{enumerate}
		\item $\calc ^\wedge \calt' = \langle \calt^j : j < \alpha+1 \rangle$ where $\calt^\alpha = \calt'$
		\item $\calc ^\wedge \calc' = \langle \calt^j : j < \alpha + \beta \rangle$ where $\calt^{\alpha + j} = \bar{\calt}^j$
	\end{enumerate}
\end{notation}

\begin{definition}\label{restrict-chains-def}
    If $\calc = \langle \calt^j : j < \alpha \rangle$ is a $\lesst$-chain of towers where $\calt^j$ is indexed by $I^j$ for all $j < \alpha$ and $I_0$ is an interval in $\bigcup_{j<\alpha} I^j$, then the \emph{restriction of $\calc$ to $I_0$} is $\calc \upharpoonright_* I_0 = \langle \calt^j \upharpoonright (I^j \cap I_0) : j < \alpha \rangle$.
\end{definition}

\begin{remark}
    `Restriction' is a little vague (you might confuse it with finding a subsequence of the $\calt^j$'s), but the notation is precise: the * signifies that we are restricting the towers in the sequence, not the sequence itself.
\end{remark}

\begin{remark}
    In the setup of Definition \ref{restrict-chains-def}, $\calc \upharpoonright_* I_0$ is a $\lesst$-chain of towers, as $I^j \cap I_0$ is an interval in $I^j$ (the $\lesst$ conditions are not harmed by removing the upper or lower sections of the towers).
\end{remark}

If we have a $\lesst$-chain of towers of length cofinality at least $\kappa$, we can append the union to the end:

\begin{definition}\label{tower-union-def}
    Suppose $\delta< \lambda^+$ is a limit ordinal where $\cof(\delta) \geq \kappa$, $\langle \calt^j : j < \delta \rangle$ is a $\lesst$-chain of towers, where $\calt^j = \langle M_i^j : j \in I^j \rangle^\wedge \langle a_i : i \in (I^j)^{-2}\rangle$. Let
    \begin{enumerate}
        \item $I^\delta = \bigcup_{j < \delta}I^j$
        \item $M_i^\delta = \bigcup \{M_i^j : j<\delta, i \in I^j\}$
    \end{enumerate} 
    Define the \emph{union of $\langle \calt^j : j < \delta \rangle$} by $\bigcup_{j<\delta} \calt^j = \langle M_i^\delta : i \in I^\delta \rangle ^\wedge \langle a_i : i \in (I^\delta)^{-2} \rangle$.
\end{definition}

\begin{lemma}\label{tower-union-lemma}
    Suppose $\delta< \lambda^+$ is a limit ordinal with $\cof(\delta) \geq \kappa$, $\langle \calt^j : j < \delta \rangle$ is a $\lesst$-chain of towers, where $\calt^j = \langle M_i^j : j \in I^j \rangle^\wedge \langle a_i : i \in (I^j)^{-2}\rangle$. If $I^\delta = \bigcup_{j<\delta} I^j$ is a well ordering, then $\calt^\delta = \bigcup_{j<\delta} \calt^j$ is a tower, and $\langle \calt^j : j \leq \delta \rangle$ is a $\lesst$-chain of towers.
\end{lemma}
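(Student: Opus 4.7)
Write $\calt^j = \langle M_i^j : i \in I^j \rangle^\wedge \langle a_i : i \in (I^j)^{-2}\rangle$, and for each $i \in I^\delta$ let $j_0(i) < \delta$ denote the least index with $i \in I^{j_0(i)}$. First, observe that condition (5) of $\lesst$ propagates to $I^\delta$: for $i \in (I^\delta)^-$, any $j$ with $\{i, i +_{I^\delta} 1\} \subseteq I^j$ satisfies $i +_{I^j} 1 = i +_{I^\delta} 1$ (otherwise $i +_{I^j} 1 \in I^j \subseteq I^\delta$ would lie strictly between them in $I^\delta$), and analogously for $+2$. Consequently, for $i \in (I^\delta)^{-2}$, setting $j^* := \max(j_0(i), j_0(i+1), j_0(i+2))$ gives $a_i \in M_{i+2}^{j^*} \lek M_{i+2}^\delta$. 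Each $M_i^\delta$ arises as the union of the $\lek^u$-increasing chain $\langle M_i^{j'} : j_0(i) \leq j' < \delta \rangle$ of cofinality $\cof(\delta) \geq \kappa$, hence is a $(\lambda, \cof(\delta))$-limit over $M_i^{j_0(i)}$ and lies in $\Kkappalims$; the chain axioms then yield that the $M_i^\delta$ are $\lek$-increasing as $i$ varies.

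The main step is to show, for $i \in (I^\delta)^{-2}$, that $p_\delta := \gtp(a_i/M_{i+1}^\delta, M_{i+2}^\delta)$ $\dnf$-does not fork over $M_i^\delta$. For $j \in [j^*, \delta)$, $\calt^{j^*} \lesst \calt^j$ gives $p_j := \gtp(a_i/M_{i+1}^j, M_{i+2}^j)$ $\dnf$-does not fork over $M_i^{j^*}$, and base monotonicity along $M_i^{j^*} \lek M_{i+1}^{j^*} \lek M_{i+1}^j$ promotes this to non-forking over $M_{i+1}^{j^*}$. Applying Lemma \ref{uniform-cty-star-gives-high-univ-cty} to the $\lek^u$-increasing chain $\langle M_{i+1}^j : j^* \leq j < \delta \rangle$ (replacing $M_{i+1}^\gamma$ by $\bigcup_{j<\gamma}M_{i+1}^j$ at intermediate limits to ensure continuity, if necessary) yields $p_\delta$ $\dnf$-does not fork over $M_{i+1}^{j^*}$. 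I actually need the base of non-forking to be $M_i^{j^*}$, not $M_{i+1}^{j^*}$, so I apply weak transitivity (Fact \ref{weak-uniq-and-ext-implies-trans}) to the quadruple $M_i^{j^*} \lek M_{i+1}^{j^*} \lek^u M_{i+1}^{j^*+1} \lek M_{i+1}^\delta$: the second non-forking hypothesis, that $p_\delta \upharpoonright M_{i+1}^{j^*+1} = p_{j^*+1}$ $\dnf$-does not fork over $M_i^{j^*}$, is exactly condition (4) of $\calt^{j^*} \lesst \calt^{j^*+1}$. Weak transitivity then delivers $p_\delta$ $\dnf$-does not fork over $M_i^{j^*}$, and a final base monotonicity (from $M_i^{j^*}$ up to $M_i^\delta$, both sitting below $M_{i+1}^\delta$) completes the verification.

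For $\calt^j \lesst \calt^\delta$, conditions (1), (3), (5) are immediate from the construction and the positional analysis. Condition (2), namely $M_i^j \lek^u M_i^\delta$ for $i \in I^j$, holds because $M_i^\delta$ is a $(\lambda, \cof(\delta))$-limit model over $M_i^j$. Condition (4), $p_\delta$ $\dnf$-does not fork over $M_i^j$ for $i \in (I^j)^{-2}$, follows from the tower non-forking just established and one more base monotonicity, since in this case $j^* \leq j$ (as $i+2 \in I^j$ forces $j_0(i+2) \leq j$, and similarly for $i, i+1$). The principal difficulty is the base-shifting argument in paragraph two: weak transitivity is essential, and this is exactly where the tower definition leaves the universal wiggle room by placing $a_i \in M_{i+2}$ rather than $a_i \in M_{i+1}$.
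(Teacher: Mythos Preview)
Your proof is correct and follows the same overall outline as the paper's. The one substantive difference is in how you establish that $p_\delta$ $\dnf$-does not fork over $M_i^{j^*}$. The paper applies $(\geq\kappa)$-universal continuity directly with base $M_i^{j_0}$, even though $M_i^{j_0}$ sits strictly below the first element $M_{i+1}^{j_0}$ of the $\lek^u$-chain; this is a slight liberty with Definition \ref{def-dnf-properties}(7), which literally takes the base to be the bottom of the chain. You instead apply continuity with base $M_{i+1}^{j^*}$ (matching the definition exactly) and then recover the smaller base $M_i^{j^*}$ via weak transitivity. Your route is more explicit; the paper's shortcut is ultimately fine (the proof of Lemma \ref{uniform-cty-star-gives-high-univ-cty} goes through with any base $M^* \lek M_0$, since $M^* \lek M_0 \lek^u M_1$ still yields $M^* \lek^u M_1$ under AP), but you have spelled out what is being used.

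One small inaccuracy in your closing remark: the universal extension $M_{i+1}^{j^*} \lek^u M_{i+1}^{j^*+1}$ that feeds into your weak transitivity comes from condition (2) of $\lesst$ (universality \emph{between} towers), not from the within-tower shift $a_i \in M_{i+2}$. This does not affect the argument. Also, the parenthetical about replacing models at intermediate limits to ensure continuity is unnecessary, since $(\geq\kappa)$-universal continuity does not require the chain to be continuous.
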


\begin{proof}
    Let $\calt^\delta = \langle M_i^\delta : i \in I^\delta \rangle ^\wedge \langle a_i : i \in (I^\delta)^{-2} \rangle$. Given $i < \delta$, by taking $j_0 = \min\{j < \delta : i \in I^j\}$, we see that since $M_i^\delta = \bigcup_{j \in [j_0,\delta)} M_i^j$, $M_i^\delta$ is a $(\lambda, \geq \kappa)$-limit model, hence compatible with $\dnf$. 
    
    If $r, s \in I^\delta$, then since $M_r^j \lek M_s^j$ for each $j < \delta$ with $r, s \in I^j$, we have $M_r^\delta \lek M_s^\delta$. So $\langle M_i^\delta : i \in I^\delta \rangle$ is $\lek$-increasing. For the $\dnf$-non-forking of the $a_i$, let $i \in (I^\delta)^{-2}$. Take $j_0 < \delta$ such that $i \in (I^{j_0})^{-2}$. As such, for $j \in [j_0, \delta]$, $i+_{I^j}1$ and $i+_{I^j}2$ are all the same, so the notation $i+1, i+2$ is unambiguous. Now for $j \geq j_0$, $\gtp(a_i/M_{i+1}^j, M_{i+2}^j)$ $\dnf$-does not fork over $M_i^{j_0}$. By $(\geq \kappa)$-universal continuity of $\dnf$ (Lemma \ref{uniform-cty-star-gives-high-univ-cty}), we have that $\gtp(a_i/M_{i+1}^\delta, M_{i+2}^\delta)$ $\dnf$-does not fork over $M_i^{j_0}$, and hence $\dnf$-does not fork over $M_i^\delta$ by monotonicity. Therefore $\calt^\delta$ is a tower.

    Now let $j < \delta$. We will show $\calt^j \lesst \calt^\delta$. (1), (2), (3), and (5) of Definition \ref{tower_ordering_def} are immediate. For (4), note in the argument above we showed that for all $j_0 < \delta$ where $i \in (I_0)^{-2}$, $\gtp(a_i/M_{i+1}^\delta, M_{i+2}^\delta)$ $\dnf$-does not fork over $M_i^{j_0}$, as required.
\end{proof}

We have shown our ordering behaves nicely with universal towers and long unions of towers. As such, we will focus on chains of such towers, which we call \emph{brilliant}.

\begin{definition}\label{brilliant-tower-def}
    A $\lesst$-chain of towers $\langle\calt^j : j < \alpha\rangle$ is \emph{brilliant} if $\bigcup_{j < \alpha} I^j$ is a well ordering, and for all $j < \alpha$, one of the following holds:
        \begin{enumerate}
            \item $\calt^j$ is a universal tower
            \item $\cof(j) \geq \kappa$ and $\calt^j = \bigcup_{k < j} \calt^k$
        \end{enumerate}
    
\end{definition}

\begin{lemma}
    If $\calc = \langle\calt^j : j < \alpha\rangle$ is a brilliant chain of towers where $\calt^j$ is indexed by $I^j$, and $I_0$ is an interval in $\bigcup_{j<\alpha} I^j$, then $\calc \upharpoonright_* I_0$ is brilliant.
\end{lemma}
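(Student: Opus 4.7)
The plan is to unpack the definition of brilliance and check each clause for $\calc \upharpoonright_* I_0 = \langle \calt^j \upharpoonright (I^j \cap I_0) : j < \alpha \rangle$. By the remark immediately following Definition \ref{restrict-chains-def}, $\calc \upharpoonright_* I_0$ is already a $\lesst$-chain of towers, so the only thing that remains is to verify the brilliance conditions.

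First, I would observe that the index set of the restricted chain is $\bigcup_{j<\alpha}(I^j \cap I_0) = \bigl(\bigcup_{j<\alpha} I^j\bigr) \cap I_0$, which is an interval in the well ordering $\bigcup_{j<\alpha} I^j$ (well ordered by brilliance of $\calc$), hence itself a well ordering. Then for each $j < \alpha$, I would case-split according to which clause of Definition \ref{brilliant-tower-def} holds for $\calt^j$ in the original chain. If $\calt^j$ is universal, then every consecutive pair $M_i \lek^u M_{i+1}$ in $\calt^j$ remains in the restriction (using that the ``$+1$'' operation in an interval of a well order agrees with the ``$+1$'' in the ambient order whenever both successors exist), so $\calt^j \upharpoonright (I^j \cap I_0)$ is still universal. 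If instead $\cof(j) \geq \kappa$ and $\calt^j = \bigcup_{k<j}\calt^k$, then I would directly compute that
\[
\calt^j \upharpoonright (I^j \cap I_0) \;=\; \bigcup_{k<j}\bigl(\calt^k \upharpoonright (I^k \cap I_0)\bigr),
\]
using that $I^j \cap I_0 = \bigcup_{k<j}(I^k \cap I_0)$ and that at each $i$ in this index set, $M_i^j = \bigcup\{M_i^k : k < j,\; i \in I^k\} = \bigcup\{M_i^k : k < j,\; i \in I^k \cap I_0\}$ (the condition $i \in I_0$ is trivially maintained).

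There is no real obstacle here: the only thing to be slightly careful about is checking that successor indices within $I^j \cap I_0$ are the same as successor indices within $I^j$, which is why $I_0$ being an \emph{interval} rather than an arbitrary subset matters — this is exactly the hypothesis built into the definition of $\calt \upharpoonright I_0$ and ensures the universal extensions and the singleton-placement conditions of the tower survive restriction. Both clauses of brilliance are preserved on the nose, so the proof is essentially a one-line verification after the preliminaries are set up.
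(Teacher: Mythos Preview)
Your proposal is correct and follows essentially the same approach as the paper, which simply observes that conditions (1) and (2) of Definition \ref{brilliant-tower-def} are preserved when upper or lower models in the tower are removed. You have merely spelled out in more detail what the paper leaves as a one-line remark.
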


\begin{proof}
    The conditions (1) and (2) of Definition \ref{brilliant-tower-def} are preserved when upper or lower models in the tower are removed.
\end{proof}

Note that all strongly $(\lambda, \delta)$-limit towers are universal towers.

\subsection{Extending towers and brilliant chains}\label{extending-towers-and-brilliant-chains-subsection}

Our goal now is to show that brilliant chains of towers may be extended with a universal tower (and remain brilliant). We will also show how to extend towers and brilliant chains with additional nice features that will be useful in Subsection \ref{reduced-subsection}, where we study \emph{reduced chains} of towers.

Many of the proofs have a similar structure, but we present them separately since combining the results is both difficult and notationally messy. Where possible, we describe how a previous construction can be altered to get the desired new result. That said, for a reader who finds themselves fatigued by these constructions, the key results to take note of going forwards are
\begin{itemize}
	\item Corollary \ref{universal-extensions-of-brilliant-chains-exist}, which says that any brilliant chain can be extended by a strongly $(\lambda, \geq \kappa)$-limit tower
	\item Proposition \ref{brilliant-extension-from-initial-segment-exist}, which says that given the bottom part of a universal tower extending the lower levels of a chain, we can complete the extending tower (up to an isomorphism)
	\item Corollary \ref{any-brill-chain-exts-exist-with-b}, which says that we can extend a brilliant chain with a universal tower while realising a non-forking type at the bottom of the new tower.
\end{itemize}

Since brilliant chains come in several different shapes, the proof of Corollary \ref{universal-extensions-of-brilliant-chains-exist} will need two different constructions. First, we deal with the case that the chain ends with a universal tower. In this case, by Lemma \ref{transitivity-on-universal}, we only need to know how to extend a single universal tower (the final one in the chain).

\begin{proposition}\label{univ-tower-exts-exist}
    Suppose $\calt = \langle M_i:i \in I \rangle ^\wedge \langle a_i : i \in I^{-2} \rangle$ is a \emph{universal} tower, and $\gamma < \lambda^+$ is limit with $\cof(\gamma) \geq \kappa$. Then there exists $\calt'$ indexed by $I$ such that $\calt \lesst \calt'$ and $\calt'$ is strongly $(\lambda, \gamma)$-limit.
    
    Moreover, if $\langle \calt^j : j < \alpha + 1 \rangle$ is a brilliant chain of towers, and $\calt^\alpha$ is universal indexed by $I$, then there exists a universal tower $\calt'$ indexed by $I$ such that $\langle \calt^j : j < \alpha + 1 \rangle ^\wedge \calt'$ is brilliant.
\end{proposition}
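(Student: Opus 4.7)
The plan is to recursively build $\calt' = \langle M_i' : i \in I \rangle^\wedge \langle a_i : i \in I^{-2}\rangle$ by induction on $i \in I$, maintaining three invariants: (a) $M_i \lek^u M_i'$; (b) for non-minimal $i$, $M_i'$ is a $(\lambda,\gamma)$-limit over $\bigcup_{r<i}M_r'$; and (c) for every $k \in I^{-2}$ with $k+2$ already built, $\gtp(a_k/M_{k+1}', M_{k+2}')$ $\dnf$-does not fork over $M_k$. Together with the facts that $I' = I$ and the $a_i$'s are reused, these invariants directly supply all five clauses of Definition \ref{tower_ordering_def} for $\calt \lesst \calt'$ and witness that $\calt'$ is strongly $(\lambda,\gamma)$-limit. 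Since $\cof(\gamma) \geq \kappa$, each $M_i' \in \Kkappalims$.

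At the minimum $i_0 \in I$, take $M_{i_0}'$ to be any $(\lambda,\gamma)$-limit over $M_{i_0}$, which exists by Fact \ref{limit-models-exist}. At a limit stage $i$ in $I$, and at a successor stage $i = k+1$ where $k$ itself is minimal or a limit of $I$ (so there is no $a_{k-1}$ to accommodate), use AP to amalgamate $\bigcup_{r<i}M_r'$ with $M_i$ over $\bigcup_{r<i}M_r$ and take $M_i'$ to be a $(\lambda,\gamma)$-limit over the amalgam. In both cases, $M_i'$ being a $(\lambda,\gamma)$-limit over the amalgam yields $M_i'$ as a $(\lambda,\gamma)$-limit over $\bigcup_{r<i}M_r'$ by substituting the base in a witnessing universal sequence, and $M_i \lek^u M_i'$ follows by the standard composition of AP with universality.

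The main obstacle is the successor step $i = k+1$ with $k = \ell+1$ for some $\ell \in I^{-2}$, where we must additionally realize $\gtp(a_\ell/M_k', M_i')$ as a $\dnf$-non-forking extension of $\gtp(a_\ell/M_k, M_i)$ over $M_\ell$. Since $\calt$ is universal, $M_\ell \lek^u M_k \lek M_k'$, so $M_\ell \lek^u M_k'$; weak extension then yields $q \in \gS(M_k')$ extending $\gtp(a_\ell/M_k, M_i)$ with $q$ $\dnf$-non-forking over $M_\ell$. Realize $q$ by $c$ in some $N \geq M_k'$. Since $\gtp(c/M_k, N) = \gtp(a_\ell/M_k, M_i)$, type equality produces $N' \geq N$ and a $\K$-embedding $\phi: M_i \to N'$ fixing $M_k$ with $\phi(a_\ell) = c$. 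Using closure of $\K$ under isomorphism (both $\phi$ and the inclusion $M_k' \hookrightarrow N'$ fix the common base $M_k$, so they can be simultaneously turned into inclusions up to renaming, accepting the identification of $a_\ell$ with $c$ when $q$ happens to be realized in $M_k'$), obtain $M_i^* \cong N'$ with $M_i \lek M_i^*$, $M_k' \lek M_i^*$, and $a_\ell$ realizing $q$ over $M_k'$ in $M_i^*$. Take $M_i'$ to be a $(\lambda,\gamma)$-limit over $M_i^*$; since $q \subseteq \gtp(a_\ell/M_k', M_i')$, invariant (c) holds, and base monotonicity upgrades this to $\dnf$-non-forking over $M_k'$ so $\calt'$ is genuinely a tower.

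For the moreover statement, the construction produces a universal tower $\calt'$ (strongly $(\lambda,\gamma)$-limit implies universal) with $\calt^\alpha \lesst \calt'$. Applying Lemma \ref{transitivity-on-universal} with the universal $\calt^\alpha$ as the middle tower gives $\calt^j \lesst \calt'$ for every $j < \alpha$, so $\langle \calt^j : j < \alpha + 1\rangle^\wedge \calt'$ is a $\lesst$-chain. Brilliance at the new final tower is immediate from clause (1) of Definition \ref{brilliant-tower-def} since $\calt'$ is universal, and brilliance at earlier indices is inherited from $\langle \calt^j : j < \alpha + 1 \rangle$, so the extended chain is brilliant.
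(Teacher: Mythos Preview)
Your overall strategy is sound and matches the paper's, but the ``up to renaming'' step hides a genuine obstacle. At each inductive stage you need $M_i'$ to contain both $M_i$ (from the given tower) and $\bigcup_{r<i}M_r'$ (already constructed) as \emph{literal} $\lek$-substructures. In the double-successor case you produce $N'$ with $M_k' \lek N'$ and an embedding $\phi:M_i\to N'$ fixing $M_k$, then assert that $\phi$ and the inclusion $M_k'\hookrightarrow N'$ can be ``simultaneously turned into inclusions.'' That requires a bijection $\psi$ on $|N'|$ with $\psi|_{M_k'}=\operatorname{id}$ and $\psi\circ\phi=\operatorname{id}_{M_i}$, which in turn forces $\phi[M_i]\cap M_k'\subseteq M_k$ (for well-definedness) and $M_i\cap M_k'\subseteq M_k$ (for injectivity). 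Neither containment follows from your construction: $M_k'$ was built at an earlier stage with no control over how its universe meets $M_i$, and $\phi$ comes from type equality with no control over where $\phi[M_i\setminus M_k]$ lands. The parenthetical about identifying $a_\ell$ with $c$ flags one symptom but does not resolve it. The same issue appears silently in your limit and easy-successor steps, where ``amalgamate and take a $(\lambda,\gamma)$-limit'' again presumes the amalgam can be realized with both inputs as literal substructures.

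The paper avoids this entirely by constructing an auxiliary increasing chain $\langle N_i:i\in I\rangle$ together with a coherent system of embeddings $f_i:M_i\to N_i$; only the $N_i$'s are required to nest, and the $M_i$'s sit inside them via the $f_i$'s. At the very end one takes $f=\bigcup_i f_i$, extends it to an isomorphism $h$, and sets $M_i'=h^{-1}[N_i]$: now $M_i\lek M_i'$ holds for all $i$ simultaneously because $h$ extends $f$. Your argument is repairable either by adopting this embeddings-then-pullback device, or by explicitly maintaining the invariant $|M_r'|\cap|M_s|=|M_r|$ for all $s>r$ throughout the recursion (choosing fresh universe elements at each stage and invoking weak disjointness, Lemma~\ref{lemma-weak-uniq-and-ext-gives-disjoint}, to rule out $c\in M_k'$ when $a_\ell\notin M_k$); as written, neither is done.
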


\begin{proof}
    Say $i_0$ is the minimal element of $I$. We construct a $\lek$-increasing sequence $\langle N_i : i \in I \rangle$ of $(\lambda, \gamma)$-limit models and a $\subseteq$-increasing sequence of $\K$-embeddings $\langle f_i : i \in I \rangle$ such that 
    \begin{enumerate}
        \item $f_i:M_i \rightarrow N_i$ for each $i \in I$
        \item $M_{i_0} \lek N_{i_0}$ and $f_{i_0}$ is the identity embedding
        \item $f_i[M_i] \lek^u N_i$ for each $i \in I$ (in fact, $N_i$ will be a $(\lambda, \gamma)$-limit model over $f_i[M_i]$)
        \item $N_i$ is $(\lambda, \gamma)$-limit over $\bigcup_{r<i} N_r$ for all $i \in I \setminus \{i_0\}$
        \item $\gtp(f_{i+2}(a_i)/N_{i+1}, N_{i+2})$ $\dnf$-does not fork over $f_i[M_i]$.
    \end{enumerate}

    \textbf{This is possible:} We construct by recursion.

    For $i = i_0$, let $N_{i_0}$ be any $(\lambda, \gamma)$-limit model over $M_{i_0}$, and $f_{i_0}$ the identity on $M_{i_0}$.

    For $i$ limit, take any $f_i:M_i \rightarrow \hat{N}_i$ extending $\bigcup_{r < i}f_r : \bigcup_{r<i} M_r \rightarrow \bigcup_{r<i} N_r$, and take $N_i$ any $(\lambda, \gamma)$-limit over $\hat{N}_i$.

    For $i = r + 1$ where $r \in I$ is initial or limit, take any $f_i : M_i \rightarrow \hat{N}_i$ extending $f_r : M_r \rightarrow N_r$. Let $N_i$ be any $(\lambda, \gamma)$-limit over $\hat{N}_i$.

    For $i = r + 2$ for some $r \in I$, we have more work to do. We may assume we have already defined $N_{r+1}$ and $f_{r+1}:M_{r+1} \rightarrow N_{r+1}$. Take some extension $\hat{f}_{r+2}:M_{r+2} \rightarrow \hat{N}_{r+2}$ where $N_{r+1} \lek \hat{N}_{r+2}$. Note $\gtp(a_r/M_{r+1}, M_{r+2})$ $\dnf$-does not fork over $M_r$ as $\calt$ is a tower, so by invariance and monotonicity, $\gtp(\hat{f}_{r+2}(a_r)/f_{r+1}[M_{r+1}], \hat{N}_{r+2})$ $\dnf$-does not fork over $f_{r+1}[M_r]$. Using that $f_{r+1}[M_r] \lek^u f_{r+1}[M_{r+1}] \lek N_{r+1}$, by weak extension, there exists $q \in \gS(N_{r+1})$ where $\gtp(\hat{f}_{i+2}(a_r)/f_{r+1}[M_{r+1}], \hat{N}_{r+1}) \subseteq q$ and $q$ $\dnf$-does not fork over $f_{r+1}[M_r]$. Say $q = \gtp(d_r/N_{r+1}, N^*_{r+2})$.

    By type equality, there is $\bar{N}_{r+2}$ where $N^*_{r+2} \lek \bar{N}_{r+2}$ and $g_{r+2}:\hat{N}_{r+2} \rightarrow \bar{N}_{r+2}$ fixing $f_{r+1}[M_{r+1}]$ such that $g_{r+2}(\hat{f}_{r+2}(a_r)) = d_r$. Now take $N_{r+2}$ a $(\lambda, \gamma)$-limit over $\bar{N}_{r+2}$, and $f_{r+2} = g_{r+2} \circ \hat{f}_{r+2}$. Then $\gtp(f_{r+2}(a_r)/N_{r+2}, N_{r+2}) = q$ $\dnf$-does not fork over $f_{r}[M_r]$, so $f_{r+2}$ and $N_{r+2}$ are as desired.

    \begin{figure}[!ht]
\centering
\begin{circuitikz}
\tikzstyle{every node}=[font=\normalsize]
\node [font=\normalsize] at (5,8.75) {$M_r$};
\node [font=\normalsize] at (5,10.75) {$M_{r+1}$};
\node [font=\normalsize] at (8,8.75) {$f_r[M_r]$};
\node [font=\normalsize] at (8,10.75) {$f_{r+1}[M_{r+1}]$};
\draw [->, >=Stealth] (5,9) -- (5,10.5);
\draw [->, >=Stealth] (8,9) -- (8,10.5);
\draw [->, >=Stealth] (5.5,10.75) -- (7,10.75);
\draw [->, >=Stealth] (5.5,8.75) -- (7,8.75);
\node [font=\normalsize] at (5,12.75) {$M_{r+2}$};
\draw [->, >=Stealth] (5.5,12.75) -- (7,12.75);
\node [font=\normalsize] at (8,12.75) {$\hat{f}_{r+2} [M_{r+2}]$};
\draw [->, >=Stealth] (5,11) -- (5,12.5);
\draw [->, >=Stealth] (8,11) -- (8,12.5);
\draw [->, >=Stealth] (9,12.75) -- (10.5,12.75);
\node [font=\normalsize] at (11,12.75) {$\hat{N}_{r+2}$};
\node [font=\normalsize] at (11,10.75) {$N_{r+1}$};
\draw [->, >=Stealth] (9,10.75) -- (10.5,10.75);
\node [font=\normalsize] at (11,8.75) {$N_r$};
\draw [->, >=Stealth] (11,9) -- (11,10.5);
\draw [->, >=Stealth] (9,8.75) -- (10.5,8.75);
\draw [->, >=Stealth] (11.5,12.75) -- (13,12.75);
\node [font=\normalsize] at (13.5,12.75) {$\bar{N}_{r+2}$};
\draw [->, >=Stealth] (13.5,11) -- (13.5,12.5);
\node [font=\normalsize] at (13.5,10.75) {$N_{r+2}^*$};
\draw [->, >=Stealth] (11.5,10.75) -- (13,10.75);
\draw [->, >=Stealth] (13.75,13) -- (14.75,14);
\node [font=\normalsize] at (15,14.25) {$N_{r+2}$};
\node [font=\normalsize] at (14.75,13.25) {$(\lambda, \gamma)$};
\node [font=\normalsize] at (6.25,13) {$\hat{f}_{r+2}$};
\node [font=\normalsize] at (6.25,11) {$f_{r+1}$};
\node [font=\normalsize] at (6.25,9) {$f_r$};
\node [font=\normalsize] at (12.25,13) {$g_{r+2}$};
\draw [->, >=Stealth] (5,13) .. controls (5,14.25) and (9.75,14.25) .. (14.5,14.25) ;
\node [font=\normalsize] at (9.75,14.5) {$f_{r+2}$};
\end{circuitikz}

\caption{The successor case of Proposition \ref{univ-tower-exts-exist}}
\label{universal_extensions_successor_case}
\end{figure}

    \textbf{This is enough:} Take $f = \bigcup_{i \in I} f_i :\bigcup_{i \in I} M_i \rightarrow \bigcup_{i \in I} N_i$, and extend it to an isomorphism $h: M' \rightarrow \bigcup_{i \in I} N_i$. Let $M_i' = h^{-1}[N_i]$ for all $i \in I$, and take $\calt' = \langle M_i' : i \in I \rangle ^\wedge \langle a_i : i \in I^{-2} \rangle$. $\calt'$ is a tower since $I$ is a well ordering, $\langle N_i : i \in I \rangle$ is $\lek^u$-increasing, and by (3) and (5) of the construction with base monotonicity. Additionally, $\calt'$ is strongly $(\lambda, \kappa)$-limit by (4) of the construction.

    Now we check that $\calt \lesst \calt'$. Clauses (1), (3), and (5) of Definition \ref{tower_ordering_def} follow from the definition of $\calt'$, and clause (2) of Definition \ref{tower_ordering_def} follows from (3) of the construction. Clause (4) of Definition \ref{tower_ordering_def} follows from (5) in the construction and invariance of $\dnf$. So $\calt'$ is as desired.
    
    For the moreover part of the statement, take any universal tower $\calt'$ indexed by $I$ extending $\calt^\alpha$ by the previous work. Using Lemma \ref{transitivity-on-universal} and that $\calt^\alpha$ is universal, $\langle \calt^j : j < \alpha + 1 \rangle ^\wedge \calt'$ is ordered by $\lesst$, so the chain is brilliant.
\end{proof}

Now we deal with the case that the brilliant chain has limit length, or ends with a union.

\begin{proposition}\label{limit-tower-exts-exist}
    Suppose $\gamma, \delta < \lambda^+$ are limit ordinals where $\cof(\gamma) \geq \kappa$, and that $\langle \calt^j : j < \delta \rangle$ is a $\lesst$-increasing sequence of universal towers. Say  $\calt^j = \langle M_i^j : i \in I^j \rangle ^\wedge \langle a_i : i \in (I^j)^{-2} \rangle$ for $j< \delta$, and suppose $\bigcup_{j < \delta} I^j = I = I^\delta$ is a well ordering. Then there exists a strongly $(\lambda, \gamma)$-limit tower $\calt'$ indexed by $I$ such that $\calt^j \lesst \calt'$ for all $j < \delta$. 
    
    Moreover, if $\cof(\delta) \geq \kappa$, then $\calt'$ can be taken such that $\bigcup_{j < \delta} \calt^j \lesst \calt'$.
\end{proposition}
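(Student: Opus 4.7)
The plan is to mirror the construction in Proposition~\ref{univ-tower-exts-exist}, tracking embeddings for every tower in the chain simultaneously. First I would recursively build over $i \in I$ a $\lek$-increasing sequence $\langle N_i : i \in I \rangle$ in $\K_\lambda$, with each $N_i$ a $(\lambda, \gamma)$-limit over $\bigcup_{r < i} N_r$, together with $\K$-embeddings $f_i^j : M_i^j \to N_i$ for every $j \in J_i := \{j < \delta : i \in I^j\}$ (each $J_i$ is an end-segment of $\delta$). The required coherence is: $f_i^j \subseteq f_{i'}^j$ when $i \leq i'$ in $I^j$, and $f_i^j \subseteq f_i^{j'}$ when $j \leq j'$ in $J_i$, with $f_i^j[M_i^j] \lek^u N_i$ throughout. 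At every successor stage $i = r + 2$, writing $j_0 := \min J_{r+2}$, one further imposes that $\gtp(f_{r+2}^{j_0}(a_r)/N_{r+1}, N_{r+2})$ $\dnf$-does not fork over $f_r^{j_0}[M_r^{j_0}]$. The base, limit, and plain-successor cases of the recursion go through by AP and universality of the chosen $(\lambda,\gamma)$-limits applied to the union $\bar{M}_i := \bigcup_{j \in J_i} M_i^j$ (which lies in $\K_\lambda$ as a union of at most $\delta < \lambda^+$ models of size $\lambda$). Once the construction is complete, taking an isomorphism $h : M' \cong \bigcup_i N_i$ and setting $M_i' := h^{-1}[N_i]$ yields the desired strongly $(\lambda, \gamma)$-limit tower $\calt'$.

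The crucial successor stage $i = r + 2$ treats the smallest index $j_0$ exactly as in Proposition~\ref{univ-tower-exts-exist}: extend $f_{r+1}^{j_0}$ to some $\hat{f}_{r+2}^{j_0} : M_{r+2}^{j_0} \to \hat{N}$, apply weak extension over the universal pair $f_r^{j_0}[M_r^{j_0}] \lek^u f_{r+1}^{j_0}[M_{r+1}^{j_0}] \lek N_{r+1}$ to produce a non-forking $q \in \gS(N_{r+1})$ extending the image type of $a_r$, realise $q$ via type equality by composing $\hat{f}_{r+2}^{j_0}$ with an embedding fixing $f_{r+1}^{j_0}[M_{r+1}^{j_0}]$, and then choose $N_{r+2}$ a $(\lambda,\gamma)$-limit absorbing the result. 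For each $j \in J_{r+2}$ with $j > j_0$, I proceed inductively in $j$: at a successor $j$, amalgamate $f_{r+1}^j$ and the previously-built $f_{r+2}^{j-1}$ (which agree on $M_{r+1}^{j-1}$) into a common embedding $f_{r+2}^j : M_{r+2}^j \to N_{r+2}$ via AP, enlarging $N_{r+2}$ if needed; at a limit $j$ in $J_{r+2}$, take the coherent union of previous maps and extend to $M_{r+2}^j$ by AP. The non-forking requirement for $\calt^j \lesst \calt'$ at $j > j_0$ is then automatic: since $f_{r+2}^j(a_r) = f_{r+2}^{j_0}(a_r)$ by coherence and $f_r^{j_0}[M_r^{j_0}] \lek f_r^j[M_r^j] \lek N_{r+1}$, base monotonicity converts non-forking over $f_r^{j_0}[M_r^{j_0}]$ into non-forking over $f_r^j[M_r^j]$.

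For the moreover part, when $\cof(\delta) \geq \kappa$, one bypasses the simultaneous construction entirely. By Lemma~\ref{tower-union-lemma}, $\calt^\delta := \bigcup_{j<\delta} \calt^j$ is a tower indexed by $I$ with $\calt^j \lesst \calt^\delta$ for every $j < \delta$. Each $M_i^\delta \lek^u M_{i+1}^\delta$ (as the union of the $\lek^u$-increasing sequence of universal pairs $M_i^j \lek^u M_{i+1}^j$), so $\calt^\delta$ is universal. Applying Proposition~\ref{univ-tower-exts-exist} to $\calt^\delta$ yields a strongly $(\lambda,\gamma)$-limit $\calt'$ with $\calt^\delta \lesst \calt'$, and Lemma~\ref{transitivity-on-universal} — using universality of $\calt^\delta$ — gives $\calt^j \lesst \calt'$ for all $j < \delta$.

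The principal obstacle is the simultaneous amalgamation at the successor stage $i = r + 2$: coherently fitting the already-built $f_{r+2}^{j_0}$ (carrying the non-forking anchor) together with each $f_{r+1}^j$ for $j > j_0$ into a single embedding $f_{r+2}^j$ extending both, without disturbing the non-forking relation anchored at $j_0$. This is delicate because AP only guarantees amalgams in some extension, so $N_{r+2}$ must be chosen as a sufficiently rich $(\lambda,\gamma)$-limit to absorb all the required amalgamations as $j$ varies along $J_{r+2}$.
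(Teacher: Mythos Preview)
Your proposal has two genuine gaps.

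\textbf{Main construction.} At stage $i = r+2$, for $j > j_0$ you propose to ``amalgamate $f_{r+1}^j$ and the previously-built $f_{r+2}^{j-1}$ into a common embedding $f_{r+2}^j : M_{r+2}^j \to N_{r+2}$ via AP.'' This is not what AP provides. You have two embeddings with domains $M_{r+1}^j$ and $M_{r+2}^{j-1}$, both $\lek$-submodels of $M_{r+2}^j$, agreeing on $M_{r+1}^{j-1}$; you want a single embedding of $M_{r+2}^j$ extending both simultaneously. But the set $|M_{r+1}^j| \cap |M_{r+2}^{j-1}|$ may properly contain $|M_{r+1}^{j-1}|$, and on that excess the two maps need not agree, in which case no common extension exists at all. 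Even when they happen to agree there, producing a simultaneous extension of two embeddings of incomparable submodels is a two-dimensional amalgamation problem, not ordinary AP. The paper sidesteps this entirely by working from the start with the unions $M_i^\delta = \bigcup_{j} M_i^j$ and building a \emph{single} embedding $f_i : M_i^\delta \to N_i$ at each level. The cost is that at stage $r+2$ one must compare the type of $a_r$ over the union $f_{r+1}[M_{r+1}^\delta]$, which need not lie in $\Kkappalims$ when $\cof(\delta) < \kappa$; this is precisely where universal continuity* in $\K$ is invoked, turning agreement over each $f_{r+1}[M_{r+1}^k]$ into agreement over their union.

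\textbf{Moreover clause.} Your claim that $\calt^\delta$ is universal (``each $M_i^\delta \lek^u M_{i+1}^\delta$ as the union of the $\lek^u$-increasing sequence of universal pairs'') is false in general. Take $I^j = \{0,1\}$ for every $j < \kappa$ and arrange $M_1^j = M_0^{j+1}$ throughout, handling limit $j$ by taking $(\lambda,\kappa)$-limits over the running union. Each $\calt^j$ is then a universal tower and the sequence is $\lesst$-increasing, yet $M_0^\kappa = \bigcup_j M_0^j = \bigcup_j M_0^{j+1} = M_1^\kappa$, so $\calt^\kappa$ is certainly not universal, and you cannot feed it to Proposition~\ref{univ-tower-exts-exist}. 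The paper does not attempt this shortcut: it verifies $\calt^\delta \lesst \calt'$ directly from the construction, observing that $\calt^j \lesst \calt'$ already yields non-forking over $M_i^j$, whence base monotonicity gives non-forking over $M_i^\delta$, and the remaining clauses of Definition~\ref{tower_ordering_def} are immediate.
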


\begin{proof}
    By (5) of the tower ordering definition, we may be ambiguous with $i+1, i+2$ between the various $I^j$. Let $M_i^\delta = \bigcup_{j < \delta} M_i^j$ (note these may not be $(\lambda, \geq \kappa)$ limits if $\cof(\delta) < \kappa$).
    
    Similarly to before, we construct a $\lek$-increasing sequence $\langle N_i : i \in I \rangle$ of $(\lambda, \gamma)$-limit models and a $\subseteq$-increasing sequence of $\K$-embeddings $\langle f_i : i \in I \rangle$ such that
    \begin{enumerate}
        \item $f_i:M_i^\delta \rightarrow N_i$ for each $i \in I$
        \item $M_{i_0}^\delta \lek N_0$ and $f_{i_0}$ is the identity embedding
        \item $f_i[M_i^\delta] \lek^u N_i$ for each $i \in I$ (in fact, $N_i$ will be a $(\lambda, \gamma)$-limit model over $f_i[M_i]$)
        \item $N_i$ is a $(\lambda, \gamma)$-limit model over $\bigcup_{r<i} N_r$ for all $i \in I \setminus \{i_0\}$
        \item For $j < \delta$ and $i \in (I^j)^{-2}$, $\gtp(f_{i+2}(a_i)/N_{i+1}, N_{i+2})$ $\dnf$-does not fork over $f_i[M_i^j]$.
    \end{enumerate}

    \textbf{This is possible:} Again we construct by recursion. The $i = i_0$, $i$ limit, and $i = \alpha + 1$ for $\alpha$ either 0 or limit cases are all identical, replacing $M_i$ with $M_i^\delta$. 

    For $i = r + 2$ for some $r \in I$, again we have more work to do. We may assume we have already defined $N_{r+1}$ and $f_{r+1}:M_{r+1}^\delta \rightarrow N_{r+1}$. Take the minimal $j < \delta$ such that $r \in (I^j)^{-2}$. Take $\hat{f}_{r+2}:M_{r+2}^\delta \rightarrow \hat{N}_{r+2}$ extending $f_{r+1}$. For all $k \in (j, \delta)$, note $\gtp(a_r/M_{r+1}^k, M_{r+2}^k)$ $\dnf$-does not fork over $M_r^j$ as $\calt^j \lesst \calt^k$, so by invariance and monotonicity, $p_k = \gtp(\hat{f}_{r+2}(a_r)/f_{r+1}[M_{r+1}^k], N_{r+1})$ $\dnf$-does not fork over $f_{r+1}[M_r^j]$ for all such $k$. Using that $f_{r+1}[M_r^j] \lek^u f_{r+1}[M_r^k] \lek f_{r+1}[M_{r+1}^k] \lek N_{r+1}$, by weak existence, there exists $q_k \in \gS(N_{r+1})$ where $p_k \subseteq q_k$ and $q_k$ $\dnf$-does not fork over $f_{r+1}[M_r^j]$. By weak uniqueness (since these all extend $p_{j+1}$), all the $q_k$ are equal, so say $q_k = q \in \gS(N_{r+1})$. Let $q = \gtp(d_r/N_{r+1}, N^*_{r+2})$.

    Note that $q\upharpoonright f_{r+1}[M_{r+1}^k] = p_k = \gtp(\hat{f}_{r+2}(a_r)/f_{r+1}[M_{r+1}^k], N_{r+1})$, so by $\Kkappalims$-universal continuity* in $\K$, $q \upharpoonright f_{r+1}[M_{r+1}^\delta] = \gtp(\hat{f}_{r+2}(a_r)/f_{r+1}[M_{r+1}^\delta], N_{r+1})$. By type equality, there is $\bar{N}_{r+2}$ where $N^*_{r+2} \lek \bar{N}_{r+2}$ and $g_{r+2}:\hat{N}_{r+2} \rightarrow \bar{N}_{r+2}$ such that $g_{r+2}(f_{r+2}(a_r)) = d_r$. Now take $N_{r+2}$ a $(\lambda, \gamma)$-limit over $\bar{N}_{r+2}$, and $f_{r+2} = g_{r+2} \circ \hat{f}_{r+2}$. $f_{r+2}$ and $N_{r+2}$ are as desired (see Figure \ref{limit_extensions_successor_case} for a picture of the construction).

    \begin{figure}[!ht]
\centering

\begin{circuitikz}
\tikzstyle{every node}=[font=\normalsize]
\node [font=\normalsize] at (2.75,7.5) {$M_r^j$};
\node [font=\normalsize] at (2.75,9.5) {$M_{r+1}^j$};
\node [font=\normalsize] at (4.5,8.75) {$f_r[M_r^j]$};
\node [font=\normalsize] at (4.5,10.75) {$f_{r+1}[M_{r+1}^j]$};
\draw [->, >=Stealth] (2.75,7.75) -- (2.75,9.25);
\draw [->, >=Stealth] (4.5,9) -- (4.5,10.5);
\node [font=\normalsize] at (2.75,11.5) {$M_{r+2}^j$};
\draw [->, >=Stealth] (3,11.75) -- (4.25,12.5);
\node [font=\normalsize] at (4.5,12.75) {$\hat{f}_{r+2} [M_{r+2}^j]$};
\draw [->, >=Stealth] (2.75,9.75) -- (2.75,11.25);
\draw [->, >=Stealth] (4.5,11) -- (4.5,12.5);
\draw [->, >=Stealth] (9,12.75) -- (10.5,12.75);
\node [font=\normalsize] at (11,12.75) {$\hat{N}_{r+2}$};
\node [font=\normalsize] at (11,10.75) {$N_{r+1}$};
\draw [->, >=Stealth] (9,10.75) -- (10.5,10.75);
\node [font=\normalsize] at (11,8.75) {$N_r$};
\draw [->, >=Stealth] (11,9) -- (11,10.5);
\draw [->, >=Stealth] (9,8.75) -- (10.5,8.75);
\draw [->, >=Stealth] (11.5,12.75) -- (13,12.75);
\node [font=\normalsize] at (13.5,12.75) {$\bar{N}_{r+2}$};
\draw [->, >=Stealth] (13.5,11) -- (13.5,12.5);
\node [font=\normalsize] at (13.5,10.75) {$N_{r+2}^*$};
\draw [->, >=Stealth] (11.5,10.75) -- (13,10.75);
\draw [->, >=Stealth] (13.75,13) -- (14.75,14);
\node [font=\normalsize] at (15,14.25) {$N_{r+2}$};
\node [font=\normalsize] at (14.75,13.25) {$(\lambda, \gamma)$};
\node [font=\normalsize] at (3,12.25) {$\hat{f}_{r+2}$};
\node [font=\normalsize] at (3.15,10.25) {$f_{r+1}$};
\node [font=\normalsize] at (3.25,8.25) {$f_r$};
\node [font=\normalsize] at (12.25,13) {$g_{r+2}$};
\draw [->, >=Stealth] (5.5,12) .. controls (5.75,14.25) and (10,14.25) .. (14.5,14.25) ;
\node [font=\normalsize] at (7.5,14.1) {$f_{r+2}$};
\node [font=\normalsize] at (8,12.75) {$\hat{f}_{r+2} [M_{r+2}^\delta]$};
\draw [->, >=Stealth] (8,11) -- (8,12.5);
\node [font=\normalsize] at (8,10.75) {$f_{r+1}[M_{r+1}^\delta]$};
\draw [->, >=Stealth] (8,9) -- (8,10.5);
\node [font=\normalsize] at (8,8.75) {$f_r[M_r^\delta]$};
\draw [->, >=Stealth] (5.5,12.75) -- (7,12.75);
\draw [->, >=Stealth] (5.5,8.75) -- (7,8.75);
\draw [->, >=Stealth] (5.5,10.75) -- (7,10.75);
\node [font=\normalsize] at (5.5,11.5) {$M_{r+2}^\delta$};
\draw [->, >=Stealth] (5.5,9.75) -- (5.5,11.25);
\node [font=\normalsize] at (5.5,9.5) {$M_{r+1}^\delta$};
\draw [->, >=Stealth] (5.5,7.75) -- (5.5,9.25);
\node [font=\normalsize] at (5.5,7.5) {$M_r^\delta$};
\draw [->, >=Stealth] (3,9.75) -- (4.25,10.5);
\draw [->, >=Stealth] (3,7.75) -- (4.25,8.5);
\draw [->, >=Stealth] (3.25,7.5) -- (5,7.5);
\draw [->, >=Stealth] (3.25,9.5) -- (5,9.5);
\draw [->, >=Stealth] (3.25,11.5) -- (5,11.5);
\draw [->, >=Stealth] (5.75,11.75) -- (7,12.5);
\draw [->, >=Stealth] (5.75,9.75) -- (7,10.5);
\draw [->, >=Stealth] (5.75,7.75) -- (7,8.5);
\node [font=\normalsize] at (6.5,11.75) {$\hat{f}_{r+2}$};
\node [font=\normalsize] at (6.5,9.75) {$f_{r+1}$};
\node [font=\normalsize] at (6.5,7.85) {$f_r$};
\end{circuitikz}

\caption{The successor case of Proposition \ref{limit-tower-exts-exist}}
\label{limit_extensions_successor_case}
\end{figure}

    \textbf{This is enough:} The argument is essentially the same from here as Proposition \ref{univ-tower-exts-exist}, replacing $M_i$ with $M_i^\delta$. Take $f = \bigcup_{i \in I} f_i :\bigcup_{i \in I} M_i^\delta \rightarrow \bigcup_{i \in I} N_i$, and extend it to an isomorphism $h: M' \rightarrow \bigcup_{i \in I} N_i$. Let $M_i' = h^{-1}[N_i]$ for all $i \in I$, and take $\calt' = \langle M_i' : i \in I \rangle ^\wedge \langle a_i : i \in I^{-2} \rangle$. $\calt'$ is a tower since $I$ is a well ordering, $\langle N_i : i \in I \rangle$ is increasing, and by (3) and (5) of the construction with base monotonicity. Additionally, $\calt'$ is strongly $(\lambda, \gamma)$-limit by (4) of the construction. $\calt^j \lesst \calt'$ for all $j < \delta$ from our construction, by essentially the same reasoning as in Proposition \ref{univ-tower-exts-exist}.

    For the moreover part, we check the clauses of Definition \ref{tower_ordering_def}. (1), (3), and (5) of Definition \ref{tower_ordering_def} are clear from the construction. (2) of Definition \ref{tower_ordering_def} follows from (3) of the construction. 
    
    It remains to show (4) of Definition \ref{tower_ordering_def} holds. For $i \in (I^\delta)^{-2}$, take $j<\delta$ large enough that $i\in (I^{j})^{-2}$. We have that $\gtp(a_i/M_{i+1}', M_{i+2}')$ $\dnf$-does not fork over $M_i^j$ as $\calt^j \lesst \calt'$, so $\gtp(a_i/M_{i+1}', M_{i+2}')$ $\dnf$-does not fork over $M_i^\delta$ by base monotonicity. This completes the proof.
\end{proof}

Finally we prove that all brilliant chains of towers can be extended by a universal tower.

\begin{corollary}\label{universal-extensions-of-brilliant-chains-exist}
    Suppose $\gamma < \lambda^+$ is limit, and $\langle\calt^j : j < \alpha\rangle$ is a brilliant chain of towers where $\alpha \geq 1$. Then there exists a strongly $(\lambda, \gamma)$-limit tower $\calt^{\alpha}$ such that $\langle \calt^j : j < \alpha + 1 \rangle$ is brilliant.
\end{corollary}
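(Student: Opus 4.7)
The plan is to split on whether $\alpha$ is a successor or a limit, and in the successor case on whether the top tower is universal or a union, then invoke the appropriate extension result from the preceding subsection. In all cases the produced tower $\calt^\alpha$ will be strongly $(\lambda, \gamma)$-limit (hence universal), so brilliance of the extended chain will reduce to verifying $\calt^j \lesst \calt^\alpha$ for every $j < \alpha$ together with the fact that the old chain is already brilliant.

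If $\alpha = \beta + 1$ and $\calt^\beta$ is universal, I would apply the moreover clause of Proposition \ref{univ-tower-exts-exist} directly to $\langle \calt^j : j < \alpha \rangle$, which immediately returns a $\calt^\alpha$ for which the extended chain is brilliant. Otherwise, either $\alpha = \beta + 1$ with $\calt^\beta = \bigcup_{k < \beta} \calt^k$ (so $\cof(\beta) \geq \kappa$), or $\alpha$ is a limit. In both of these remaining cases I would pass to the cofinal subsequence of successor-indexed towers $\langle \calt^{k+1} : k < \beta \rangle$ (respectively $\langle \calt^{j+1} : j < \alpha \rangle$), each of which is universal by brilliance since successor ordinals have cofinality $1 < \kappa$. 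The hypotheses of Proposition \ref{limit-tower-exts-exist} are then satisfied: the index union $\bigcup_{k < \beta} I^{k+1} = \bigcup_{k < \beta} I^k$ is still a well-ordering (by brilliance of the original chain), and in the successor-of-union case the moreover clause applies with $\cof(\beta) \geq \kappa$ and yields $\calt^\beta \lesst \calt^\alpha$ as well, using $\bigcup_{k < \beta} \calt^{k+1} = \calt^\beta$.

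To finish brilliance of $\langle \calt^j : j < \alpha + 1 \rangle$, the remaining orderings $\calt^j \lesst \calt^\alpha$ for non-successor $j < \alpha$ (i.e.\ $j = 0$ or $j$ a limit) follow from $\calt^j \lesst \calt^{j+1} \lesst \calt^\alpha$ via Lemma \ref{transitivity-on-universal}, since $\calt^{j+1}$ is always universal by brilliance (as a successor index). The principal obstacle is precisely the mismatch between the brilliant chain (which may contain non-universal union towers at limit indices of cofinality $\geq \kappa$) and the hypothesis of Proposition \ref{limit-tower-exts-exist} (which demands universality at every index); the successor-subsequence trick, together with Lemma \ref{transitivity-on-universal} to fill in the omitted tower orderings, bridges this gap cleanly.
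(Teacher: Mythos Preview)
Your proposal is correct and follows essentially the same approach as the paper. The only cosmetic differences are that you use the full successor-indexed subsequence $\langle \calt^{k+1} : k < \beta \rangle$ directly (the paper instead extracts a cofinal subsequence $\langle j_t : t < \mu \rangle$ of regular length from among the successor indices), and in the universal-top case you invoke the moreover clause of Proposition \ref{univ-tower-exts-exist} whereas the paper applies the main clause and then Lemma \ref{transitivity-on-universal} by hand; neither difference is substantive.
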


\begin{proof}
    First, suppose $\alpha = \beta+1$ for some $\beta < \lambda^+$. Then either $\calt^\beta$ falls into condition (1) or condition (2) of Definition \ref{brilliant-tower-def}. 
    
    In case (1), $\calt^\beta$ is universal. Apply Proposition \ref{univ-tower-exts-exist} to get $\calt^\alpha$ strongly $(\lambda, \gamma)$-limit where $\calt^\beta \lesst \calt^\alpha$. Since for all $j < \beta$ $\calt^j \lesst \calt^\beta \lesst \calt^\alpha$, we have $\calt^j \lesst \calt^\alpha$ by Lemma \ref{transitivity-on-universal}. As $\calt^\alpha$ is universal, $\langle \calt^j : j < \alpha + 1 \rangle$ is brilliant.

    In case (2), since (1) of Definition \ref{brilliant-tower-def} implies that  $\calt^{j+1}$ is universal for all $j < \beta$, there is a cofinal sequence $\langle j_t : t < \mu\rangle$ of $\beta$ such that $\mu \geq \kappa$ is regular and $\calt^{j_t}$ is universal for all $t < \mu$. By the `moreover' version of Proposition \ref{limit-tower-exts-exist}, there is a strongly $(\lambda, \gamma)$-tower $\calt^\alpha$ where $\calt^j \lesst \calt^\alpha$ for all $j \in \{j_t : t < \mu\} \cup \{\beta\}$. For all $j < \beta$, there is $t < \mu$ where $j < j_t$, so $\calt^j \lesst \calt^{j_t} \lesst \calt^\alpha$. Since $\calt^{j_t}$ is universal, $\calt^j < \calt^\alpha$ by Lemma \ref{transitivity-on-universal}. As $\calt^\alpha$ is universal, $\langle \calt^j : j < \alpha + 1 \rangle$ is brilliant.

    Otherwise, $\alpha$ is a limit. The argument is similar to the (2) case from before. Again since $\calt^{j+1}$ is universal for all $j < \alpha$, there is a cofinal sequence $\langle j_t : t < \mu\rangle$ of $\beta$ such that $\mu$ is regular and $\calt^{j_t}$ is universal for all $t < \mu$ (in this case, it is possible that $\mu < \kappa$). Again by Proposition \ref{limit-tower-exts-exist}, there is a strongly $(\lambda, \gamma)$-limit tower $\calt^\alpha$ where $\calt^{j_t} \lesst \calt^\alpha$ for all $t < \mu$. For $j < \alpha$, there is $t < \mu$ where $j < j_t$, so $\calt^j \lesst \calt^{j_t} \lesst \calt^\alpha$. By Lemma \ref{transitivity-on-universal}, we have that $\calt^j \lesst \calt^\alpha$ for all $j < \alpha$. As $\calt^\alpha$ is universal, $\langle \calt^j : j < \alpha \rangle$ is brilliant. This concludes the proof.
\end{proof}

The following result shows that we can modify these arguments to complete partial extensions of towers `up to isomorphism'.

\begin{proposition}\label{brilliant-extension-from-initial-segment-exist}
    Suppose $\langle \calt^j : j < \alpha + 1 \rangle$ is a brilliant sequence of towers. Assume $\calt^\alpha = \langle M_i:i < I \rangle ^\wedge \langle a_i : i \in I^{-2} \rangle$ is a universal tower, that $I_0 \subseteq I$ is an initial segment, and $\left( \langle \calt^j :j < \alpha + 1 \rangle \upharpoonright^* I_0 \right) ^\wedge \calt^*$ is brilliant where $\calt^* = \langle M_i^*:i \in I_0 \rangle ^\wedge \langle a_i : i \in (I_0)^{-2} \rangle$ is a universal tower. Then there is $\calt'= \langle M_i':i \in I \rangle ^\wedge \langle a_i : i \in I^{-2} \rangle$ a universal tower such that $\langle \calt^j : j < \alpha + 1 \rangle^\wedge \calt'$ is brilliant and an isomorphism $g:\bigcup_{i \in I_0} M_i' \underset{\bigcup_{i \in I_0}M_i}{\cong} \bigcup_{i \in I_0} M_i^*$ such that $g[M_i'] = M_i^*$ for each $i \in I_0$.
\end{proposition}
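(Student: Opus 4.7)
The plan is to adapt the construction in the proof of Proposition \ref{univ-tower-exts-exist}, but seed it with $\calt^*$ on the initial segment $I_0$ and continue the recursion over $I \setminus I_0$. Concretely, I would build a $\lek$-increasing sequence $\langle N_i : i \in I \rangle$ of $(\lambda, \geq\kappa)$-limit models and a coherent sequence of $\K$-embeddings $\langle f_i : M_i \to N_i : i \in I \rangle$ satisfying the analogues of (1)--(5) in Proposition \ref{univ-tower-exts-exist} by declaring, for $i \in I_0$, $N_i := M_i^*$ and $f_i : M_i \hookrightarrow M_i^*$ the inclusion (which is a $\K$-embedding because $M_i \lek^u M_i^*$ follows from $\calt^\alpha \upharpoonright I_0 \lesst \calt^*$); and for $i \in I \setminus I_0$, defining $N_i, f_i$ by recursion exactly as in the limit, successor, and $i = r+2$ cases of Proposition \ref{univ-tower-exts-exist}.

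Once the construction is complete, take $f := \bigcup_{i \in I} f_i : \bigcup_{i \in I} M_i \to \bigcup_{i \in I} N_i$ and extend it to an isomorphism $h : M' \to \bigcup_{i \in I} N_i$ with $M' \supseteq \bigcup_{i \in I} M_i$. Set $M_i' := h^{-1}[N_i]$ for each $i \in I$ and $\calt' := \langle M_i' : i \in I \rangle^\wedge \langle a_i : i \in I^{-2} \rangle$. As in Proposition \ref{univ-tower-exts-exist} this yields a universal tower with $\calt^\alpha \lesst \calt'$, and since $\calt^\alpha$ is universal, Lemma \ref{transitivity-on-universal} gives $\calt^j \lesst \calt'$ for every $j \leq \alpha$, so $\langle \calt^j : j < \alpha + 1 \rangle^\wedge \calt'$ is brilliant. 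Because $h$ restricted to $M_i$ for $i \in I_0$ equals $f_i$, which sends $m \in M_i$ to $m \in M_i^*$, the map $h$ fixes $\bigcup_{i \in I_0} M_i$ pointwise and satisfies $h[M_i'] = M_i^*$ for $i \in I_0$; the required isomorphism is then $g := h \upharpoonright \bigcup_{i \in I_0} M_i'$.

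The main obstacle will be verifying clause (5) at the ``boundary'' successor step $i = r + 2$ where $r \in I_0$ but $r + 2 \notin I_0$. For $r \in I_0^{-2}$ the non-forking condition is immediate: $\gtp(f_{r+2}(a_r)/N_{r+1}, N_{r+2})$ equals $\gtp(a_r/M_{r+1}^*, M_{r+2}^*)$, which $\dnf$-does not fork over $M_r$ because $\calt^\alpha \upharpoonright I_0 \lesst \calt^*$. Outside $I_0^{-2}$, one imitates the successor step of Proposition \ref{univ-tower-exts-exist}: pick any $\hat{f}_{r+2} : M_{r+2} \to \hat{N}_{r+2}$ extending $f_{r+1}$ with $N_{r+1} \lek \hat{N}_{r+2}$; note that $\gtp(\hat{f}_{r+2}(a_r)/f_{r+1}[M_{r+1}], \hat{N}_{r+2})$ $\dnf$-does not fork over $f_{r+1}[M_r]$ by invariance and monotonicity; then apply weak extension using $f_{r+1}[M_r] \lek^u f_{r+1}[M_{r+1}] \lek N_{r+1}$ (the universal extension holding because $\calt^\alpha$ is universal, hence $M_r \lek^u M_{r+1}$, together with invariance) to obtain a non-forking extension $q \in \gS(N_{r+1})$; finally apply type equality to define $f_{r+2}$ and a model $N_{r+2}$ (made universal over $N_{r+1}$ as needed) realizing the setup. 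The remaining verifications, including the universality of $\calt'$ (ensured by choosing $N_i$ universal over $N_{i-1}$ for $i \in I \setminus I_0$, combined with the universality already built into $\calt^*$ on $I_0$) and clause (4) of $\calt^\alpha \lesst \calt'$ via invariance through $h$, go through exactly as in Proposition \ref{univ-tower-exts-exist}.
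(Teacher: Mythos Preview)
Your proposal is correct and takes essentially the same approach as the paper: seed the recursion from Proposition \ref{univ-tower-exts-exist} with $N_i = M_i^*$ and $f_i$ the inclusion for $i \in I_0$, continue the construction over $I \setminus I_0$, then pull back by $h$ and set $g = h \upharpoonright \bigcup_{i \in I_0} M_i'$. The paper additionally phrases this as modifying both Proposition \ref{univ-tower-exts-exist} and Proposition \ref{limit-tower-exts-exist} and then invoking the case split of Corollary \ref{universal-extensions-of-brilliant-chains-exist}, but since the hypothesis here already says $\calt^\alpha$ is universal you are right that only the modified Proposition \ref{univ-tower-exts-exist} plus Lemma \ref{transitivity-on-universal} is needed; your version is a mild streamlining of the same argument.
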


\begin{proof}
	Rather than giving a new proof from the ground up, we describe how to modify the proof of Corollary \ref{universal-extensions-of-brilliant-chains-exist} to get the result. Essentially the method is the same, but we only ever construct new towers from the minimal element of $I \setminus I_0$ onwards.
	
    For notational convenience, note that if $I_0 = \varnothing$, this holds by an application of Corollary \ref{universal-extensions-of-brilliant-chains-exist}. So assume $I_0 \neq \varnothing$.
    
    First note the analogous statements of Proposition \ref{univ-tower-exts-exist} and Proposition \ref{limit-tower-exts-exist} (that is, we suppose also that we have universal $\calt^*$ $\lesst$-extending the existing towers restricted to $I_0$, and we want our new tower $\calt'$ to satisfy $\calt' \upharpoonright I_0 = \calt^*$) hold by the same proofs, with the following modifications: in either proof, replace conditions (2) and (4) with 
    \begin{enumerate}
        \item[(2*)] For all $i \in I_0$, $N_i = M_i^*$ and $f_i$ is the inclusion $M_i \rightarrow M_i^*$.
        \item[(4*)] $N_i \lek^u N_{i+1}$ for all $i \in I^-$.
    \end{enumerate}
    where $M_i = M_i^\delta$ in the case of Proposition \ref{limit-tower-exts-exist}.
    
    (2*) determines the construction for $i \in I_0$, and from there both successor cases and the limit case are the same. Note that $\calt\upharpoonright I_0 \lesst \calt^*$ and the assumption that $\calt^*$ is universal ensure that the hypotheses all hold for $i \in I_0$. As before, define $f = \bigcup_{i < \alpha} f_i : \bigcup_{i \in I}M_i \rightarrow \bigcup_{i \in I}N_i$, and take $h : M' \rightarrow \bigcup_{i \in I} N_i$. Let $M_i' = h^{-1}[N_i]$, and take $\calt'= \langle M_i':i \in I \rangle ^\wedge \langle a_i : i \in I^{-2} \rangle$. By the same reasoning as before, $\calt'$ is an appropriate extension - in particular $\calt'$ is universal by condition (4*). And by (2*), $M_i' = h^{-1}[M_i^*]$ for all $i < \beta$, so $\calt' \upharpoonright \beta = \calt^*$ as desired. Take $g = h^{-1} \upharpoonright \bigcup_{i \in I_0} M_i'$, and we see that $\calt'$ and $g$ are as desired.
    
    So we can find universal extensions $\calt'$ for universal towers with $\calt^*$ as an initial segment, and similarly for universal chains. From here, the method of Corollary \ref{universal-extensions-of-brilliant-chains-exist}, using these modified versions of Proposition \ref{univ-tower-exts-exist} and Proposition \ref{limit-tower-exts-exist} in place of the originals, finishes the proof.
\end{proof}

And finally, another modified version. Note this is the only time we use $(\lambda, \theta)$-weak non-forking amalgamation:

\begin{proposition}\label{univ-tower-exts-exist-with-b}
    Suppose $\calt = \langle M_i : i <I \rangle ^\wedge \langle a_i : i \in I^{-2} \rangle$ is a strongly \emph{$(\lambda, \theta)$}-limit tower and $\gamma < \lambda^+$ is limit where $\cof(\gamma) \geq \kappa$. Let $i_0$ be minimal in $I$, and suppose $M_* \lek^u M_{i_0}$ is a $(\lambda, \geq \kappa)$-limit model, and $p \in \gS(M_{i_0})$ $\dnf$-does not fork over $M_*$. Then there exists a strongly $(\lambda, \gamma)$-limit tower $\calt' =  \langle M_i' : i <I \rangle ^\wedge \langle a_i : i \in I^{-2} \rangle$ such that 
    \begin{enumerate}
        \item $\calt \lesst \calt'$
        \item $b \in M_{i_0}'$
        \item $p = \gtp(b/M_{i_0}, M_{i_0}')$
        \item $\gtp(b/M_i, \bigcup_{i<\alpha} M_i')$ $\dnf$-does not fork over $M_*$ for all $i \in I$.
    \end{enumerate}
\end{proposition}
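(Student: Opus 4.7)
The plan is to mirror the recursive construction in the proof of Proposition \ref{univ-tower-exts-exist}, but additionally track a designated element $b$ at the bottom level whose type over each $f_i[M_i]$ is pinned down to be a specific $\dnf$-non-forking extension of $p$ over $M_*$. Before starting the recursion, I would use weak extension and weak uniqueness applied with $M_* \lek^u M_{i_0} \lek M_i$ to produce, for each $i \in I$, the unique $\dnf$-non-forking extension $p_i \in \gS(M_i)$ of $p$ over $M_*$ (note $M_i$ is a $(\lambda, \geq\kappa)$-limit, and $M_* \lek^u M_{i_0}$ composed with $\calt$ strongly $(\lambda,\theta)$-limit gives $M_* \lek^u M_i$). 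Weak uniqueness forces coherence: $p_i \upharpoonright M_r = p_r$ for $r \leq i$. The target invariant of the construction is $\gtp(b/f_i[M_i], N_i) = f_i(p_i)$ at every stage; conclusion (4) of the proposition then follows by monotonicity.

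At the base $i = i_0$, take $N_{i_0}$ a $(\lambda, \gamma)$-limit over $M_{i_0}$ and $b \in N_{i_0}$ realizing $p$, with $f_{i_0}$ the identity. At a generic successor step $i = r+1$ (with $r$ initial or limit, so no $a_r$-condition to handle) and at limit steps, first form a preliminary amalgamation $\hat f_i, \hat N_i$ extending the previous data. Both $b$ and any realization $c$ of the unique non-forking extension $f_i(p_i)$ have the same type over $\bigcup_{r<i} f_r[M_r]$ (by coherence of the $p_r$ and weak uniqueness), so a type-equality argument gives an embedding identifying $b$ with $c$; after composing with this embedding and taking a $(\lambda,\gamma)$-limit, we obtain $f_i, N_i$ realizing the invariant. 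For limit $i$ with $\cof(i) \geq \kappa$, Lemma \ref{uniform-cty-star-gives-high-univ-cty} automatically gives that $b$'s type over $\bigcup_{r<i} f_r[M_r]$ does not fork over $M_*$, which simplifies the bookkeeping.

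The main obstacle, and the only place where $(\lambda,\theta)$-weak non-forking amalgamation is essential, is the step $i = r+2$: here we must simultaneously arrange $\gtp(f_{r+2}(a_r)/N_{r+1}, N_{r+2})$ does not fork over $f_{r+1}[M_r]$ (needed for $\calt \lesst \calt'$) and $\gtp(b/f_{r+2}[M_{r+2}], N_{r+2}) = f_{r+2}(p_{r+2})$. I would apply $(\lambda,\theta)$-weak non-forking amalgamation with base $M_0 = f_{r+1}[M_r]$, middle $M = f_{r+1}[M_{r+1}]$ (which is $(\lambda,\theta)$-limit over $f_{r+1}[M_r]$ by invariance, since $\calt$ is strongly $(\lambda,\theta)$-limit), left model an embedded copy of $M_{r+2}$ containing $\hat f_{r+2}(a_r)$ (type over $M$ not forking over $M_0$ by the tower axioms), and right model $N_{r+1}$ containing $b$ (type $f_{r+1}(p_{r+1})$, which does not fork over $M_* = f_{r+1}[M_*]$, hence over $f_{r+1}[M_r]$ by base monotonicity since $M_* \lek M_r$). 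The amalgamation supplies an $N$ in which both $\gtp(a_r\text{-image}/\text{right-image})$ and $\gtp(b/\text{left-image})$ do not fork over $f_{r+1}[M_r]$. The first conjunct directly gives the $a_r$-condition. For the second, we have $\gtp(b/f_{r+2}[M_{r+2}], N)$ not forking over $f_{r+1}[M_r]$, while its restriction to $f_{r+1}[M_{r+1}]$ is $f_{r+1}(p_{r+1})$ which already does not fork over $M_*$; since $M_* \lek f_{r+1}[M_r] \lek^u f_{r+1}[M_{r+1}] \lek f_{r+2}[M_{r+2}]$ (using transitivity of $\lek^u$), weak transitivity (Fact \ref{weak-uniq-and-ext-implies-trans}) upgrades this to non-forking over $M_*$, and weak uniqueness identifies the resulting type with $f_{r+2}(p_{r+2})$. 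Pass to a $(\lambda,\gamma)$-limit $N_{r+2}$ over $N$ to ensure universality.

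Once the recursion is complete, assemble $\calt'$ exactly as at the end of the proof of Proposition \ref{univ-tower-exts-exist}: extend $\bigcup_{i \in I} f_i$ to an isomorphism $h : M' \to \bigcup_{i \in I} N_i$, set $M_i' = h^{-1}[N_i]$, and take $b' := h^{-1}(b) \in M_{i_0}'$. Then $\calt' = \langle M_i' : i \in I\rangle ^\wedge \langle a_i : i \in I^{-2}\rangle$ is a strongly $(\lambda,\gamma)$-limit tower, the $r+2$ argument above yields $\calt \lesst \calt'$, and the invariant $\gtp(b'/M_i, M_i') = p_i$ plus monotonicity delivers conclusions (2)--(4).
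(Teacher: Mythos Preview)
Your proposal is correct and follows essentially the same approach as the paper's proof. Both arguments construct $N_i$, $f_i$, and a designated element $b$ (called $c$ in the paper) by recursion, maintaining the invariant that $\gtp(b/f_i[M_i], N_i)$ $\dnf$-does not fork over $M_*$, and both identify the $r+2$ step as the place where $(\lambda,\theta)$-weak non-forking amalgamation is applied (followed by weak transitivity to push non-forking down from $f_{r+1}[M_r]$ to $M_*$). One small remark: at limit $i$ of arbitrary cofinality, the justification that $b$ and a realisation of the non-forking extension agree over $\bigcup_{r<i} f_r[M_r]$ is universal continuity* in $\K$ (Definition~\ref{continuity*-def}), not merely weak uniqueness; the paper invokes this explicitly, and your sketch should too.
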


\begin{proof}
	By relabelling if necessary, we can assume $I = (\alpha, <)$ for some ordinal $\alpha < \lambda^+$, and $i_0 = 0$.
	
    Similar to before, we construct a $\lek$-increasing sequence $\langle N_i : i < \alpha \rangle$ of $(\lambda, \gamma)$-limit models, a $\subseteq$-increasing sequence of $\K$-embeddings $\langle f_i : i < \alpha \rangle$, and $c \in N_0$, such that 
    \begin{enumerate}
        \item $f_i:M_i \rightarrow N_i$ for each $i < \alpha$
        \item $M_0 \lek N_0$ and $f_{0}:M_0 \rightarrow N_0$ is the identity embedding
        \item $f_i[M_i] \lek^u N_i$ for each $i < \alpha$ (in fact, $N_i$ will be a $(\lambda, \gamma)$-limit model over $f_i[M_i]$)
        \item $N_i$ is a $(\lambda, \gamma)$-limit model over $\bigcup_{r<i} N_r$ for all $i \in [1, \alpha)$
        \item $\gtp(f_{i+2}(a_i)/f_{i+2}[M_{i+1}], N_{i+2})$ $\dnf$-does not fork over $f_i[M_i]$ for each $i < \alpha^{-2}$.
        \item $\gtp(c/f_i[M_i], N_i)$ $\dnf$-does not fork over $M_*$ for each $i < \alpha$.
    \end{enumerate}

    \textbf{This is possible:} We construct by recursion. 

    For $i = 0$, let $p = \gtp(c/M_0, N_0)$. Increasing $N_0$ if necessary, we can assume $N_0$ is a $(\lambda, \gamma)$-limit over $M_0$. Take $f_0:M_0 \rightarrow N_0$ to be the identity embedding.


    When $i$ is a limit, we have $N_r$ and $f_r$ for $r < i$. Let $M_i^0 = \bigcup_{r<i} M_r$, $N_i^0 = \bigcup_{r<i} N_r$, Let $f_i^0 = \bigcup_{r<i} f_r : M^0_i \rightarrow N_i^0$. By $\lambda$-AP, there exist $\hat{N}_i \in \K_\lambda$ where $N_i^0 \lek \hat{N}_i$ and $\hat{f}_i : M_i \rightarrow \hat{N}_i$ extending $f_i^0$.

    For each $r < i$, as $\gtp(c/f_i^0[M_r], N_r)$ $\dnf$-does not fork over $M_*$, using weak extension and $M_* \lek^u M_r$, there exists $q_r \in \gS(\hat{f}_i[M_i])$ an extension of $\gtp(c/f_r[M_r], N_r)$ which $\dnf$-does not fork over $M_*$. By weak uniqueness, since $q_r \upharpoonright f_0[M_0] = \gtp(c/M_0, N_0)$ $\dnf$-does not fork over $M_*$, these $q_r$ are all equal, so there is some $q \in \gS(\hat{f}_i[M_i])$ such that $q_r = q$ for all $r < i$. Since $q \upharpoonright f_r[M_r] = \gtp(c/f_r[M_r], N_r)$ which $\dnf$-does not fork over $M_*$ for all $r < i$, $q \upharpoonright f^0_i[M^0_i] = \gtp(c/f^0_i[M^0_i], N_i^0)$ by universal continuity* in $\K$. Say $q = \gtp(d/\hat{f}_i[M_i], N^*_i)$. As $\gtp(c/f_i^0[M_i^0], N_i^0) = \gtp(d/f^0_i[M_i^0], N^*_i)$, there exist $\bar{N}_i \in \K_\lambda$ with $N_i^0 \lek \bar{N}_i$ and $g_i : N_i^* \rightarrow \bar{N}_i$ fixing $\hat{f}_i[M_i^0]$ such that $g_i(d) = c$. Take $N_i$ a $(\lambda, \gamma)$-limit model over $\bar{N}_i$. Then set $f_i = g_i \circ \hat{f}_i : M_i \rightarrow N_i$.

    \begin{figure}[!ht]
\centering

\begin{circuitikz}
\tikzstyle{every node}=[font=\normalsize]
\node [font=\normalsize] at (4.25,5.5) {$M^*$};
\node [font=\normalsize] at (5.5,7) {$M_i^0$};
\node [font=\normalsize] at (5.5,8.75) {$M_i$};
\node [font=\normalsize] at (7.25,7) {$f_i^0[M_i^0]$};
\node [font=\normalsize] at (9.75,7) {$N_i^0$};
\node [font=\normalsize] at (9.75,8.75) {$\hat{N}^i$};
\node [font=\normalsize] at (7.5,8.75) {$\hat{f}_i[M_i]$};
\node [font=\normalsize] at (6.25,9.25) {$\hat{f}_i$};
\node [font=\normalsize] at (7.5,10.5) {$N_i^*$};
\node [font=\normalsize] at (12,10.5) {$\bar{N}_i$};
\draw [->, >=Stealth] (4.25,5.75) -- (5.25,6.75);
\draw [->, >=Stealth] (5.5,7.25) -- (5.5,8.5);
\draw [->, >=Stealth] (5.75,7) -- (6.75,7);
\draw [->, >=Stealth] (8.25,7) -- (9.25,7);
\draw [->, >=Stealth] (8.25,8.75) -- (9.25,8.75);
\draw [->, >=Stealth] (5.75,8.75) -- (6.75,8.75);
\draw [->, >=Stealth] (7.75,10.5) -- (11.75,10.5);
\draw [->, >=Stealth] (10,7) .. controls (12,7) and (12,8.75) .. (12,10.25) ;
\draw [->, >=Stealth] (12.5,10.75) -- (13.5,11.75);
\draw [->, >=Stealth] (7.5,7.25) -- (7.5,8.5);
\draw [->, >=Stealth] (9.75,7.25) -- (9.75,8.5);
\draw [->, >=Stealth] (7.5,9) -- (7.5,10.25);
\node [font=\normalsize] at (9.75,10.75) {$g_i$};
\node [font=\normalsize] at (13.75,12) {$N_i$};
\node [font=\normalsize] at (13.5,11) {$(\lambda, \gamma)$};
\draw [->, >=Stealth] (5.5,9) .. controls (5.75,11.5) and (7,12) .. (13.5,12);
\node [font=\normalsize] at (8,12) {$f_i$};
\end{circuitikz}

\caption{The limit case of Proposition \ref{univ-tower-exts-exist-with-b}}
\label{extending_with_b_limit_case}
\end{figure}

    When $i = \alpha + 1$ for $\alpha$ either $0$ or limit, do the same construction, but replace $M_i^0$, $N_i^0$, and $f_i^0$ by $M_\alpha$, $N_\alpha$, and $f_\alpha$ respectively.

    For $i = r + 2$ for some $r \in \alpha^{-2}$, we are given $N_{r+1}$ and $f_{r+1}$. Take some isomorphism $\hat{f}_{r+2}:M_{r+2} \rightarrow \hat{M}_{r+2}$ extending $f_{r+1}:M_{r+1} \rightarrow f_{r+1}[M_{r+1}]$. We have that $\gtp(\hat{f}_{r+2}(a_r)/\hat{f}_{r+2}[M_{r+1}], \hat{M}_{r+2})$ $\dnf$-does not fork over $f_r[M_r]$ by invariance, and $\gtp(c/\hat{f}_{r+2}[M_{r+1}], N_{r+1})$ $\dnf$-does not fork over $f_r[M_r]$ by base monotonicity. Since $\calt$ is a strongly $(\lambda, \theta)$-limit tower, $f_{r+1}[M_{r+1}]$ is a $(\lambda, \theta)$-limit model over $f_r[M_r]$. So by $(\lambda, \theta)$-weak non-forking amalgamation of $\dnf$, there exists $\bar{N}_{r+2} \in \K_\lambda$ and a map $g_{r+2}:\hat{M}_{r+2} \rightarrow \bar{N}_{r+2}$ fixing $\hat{f}_{r+2}[M_{r+1}]$ such that $\gtp(g_{r+2}(\hat{f}_{r+2}(a_r))/N_{r+1}, \bar{N}_{r+2})$ and $\gtp(c/g_{r+2}[f_{r+1}[M_{r+1}]], \bar{N}_{r+2})$ $\dnf$-do not fork over $f_r[M_r]$. Take $N_{r+2}$ any $(\lambda, \kappa)$-limit model over $\bar{N}_{r+2}$, and let $f_{r+2} = g_{r+2} \circ \hat{f}_{r+2}$. So far we have established conditions (1)-(5) of the construction.

    \begin{figure}[!ht]
\centering

\begin{circuitikz}
\tikzstyle{every node}=[font=\normalsize]
\node [font=\normalsize] at (2.75,6.75) {$M_*$};
\draw [->, >=Stealth] (3,7) -- (3.75,7.75);
\node [font=\normalsize] at (4,8) {$M_r$};
\draw [->, >=Stealth] (4.5,8) -- (6,8);
\node [font=\normalsize, color=blue] at (6.75,8) {$f_r[M_r]$};
\draw [->, >=Stealth] (4,8.25) -- (4,9.75);
\draw [->, >=Stealth, color=blue] (6.75,8.25) -- (6.75,9.75);
\draw [->, >=Stealth] (4,10.25) -- (4,11.75);
\node [font=\normalsize] at (4,10) {$M_{r+1}$};
\draw [->, >=Stealth] (4.5,10) -- (5.75,10);
\node [font=\normalsize, color=blue] at (6.75,10) {$f_{r+1}[M_{r+1}]$};
\draw [->, >=Stealth, color=blue] (6.75,10.25) -- (6.75,11.75);
\node [font=\normalsize] at (4,12) {$M_{r+2}$};
\draw [->, >=Stealth] (4.5,12) -- (6,12);
\node [font=\normalsize, color=blue] at (6.75,12) {$\hat{M}_{r+2}$};
\draw [->, >=Stealth, color=red, dashed] (7.5,12) -- (9,12);
\draw [->, >=Stealth, color=blue] (7.75,10) -- (9,10);
\draw [->, >=Stealth] (7.5,8) -- (9,8);
\node [font=\normalsize, color=red] at (9.5,12) {$\bar{N}_{r+2}$};
\node [font=\normalsize, color=blue] at (9.5,10) {$N_{r+1}$};
\node [font=\normalsize] at (9.5,8) {$N_r$};
\draw [->, >=Stealth, color=red, dashed] (9.5,10.25) -- (9.5,11.75);
\draw [->, >=Stealth] (9.5,8.25) -- (9.5,9.75);
\draw [->, >=Stealth] (9.75,12.25) -- (10.5,13);
\node [font=\normalsize] at (10.75,13.25) {$N_{r+2}$};
\node [font=\normalsize] at (10.75,12.5) {$(\lambda, \gamma)$};
\node [font=\normalsize, color=red] at (8.25,12.25) {$g_{r+2}$};
\node [font=\normalsize] at (5.15,12.35) {$\hat{f}_{r+2}$};
\draw [->, >=Stealth] (4,12.25) .. controls (4,13.25) and (7.25,13.25) .. (10.25,13.25) ;
\node [font=\normalsize] at (7.25,13.5) {$f_{r+2}$};
\node [font=\normalsize] at (5.15,10.25) {$f_{r+1}$};
\node [font=\normalsize] at (5.15,8.25) {$f_r$};
\end{circuitikz}

\caption{The $r + 2$ case of Proposition \ref{univ-tower-exts-exist-with-b}. $(\lambda, \theta)$-weak non-forking amalgamation on the blue components results in the dashed red components.}
\label{extending_with_b_successor_case}
\end{figure}

    Note that $\gtp(c/f_{r+2}[M_{r+1}], \bar{N}_{r+2})$ $\dnf$-does not fork over $f_{r+2}[M_r]$, and $\gtp(c/f_{r+2}[M_{r+1}], N_{r+2})$ $\dnf$-does not fork over $M_*$ by the induction hypothesis. We have that $M_* \lek f_{r+2}[M_r] \lek^u f_{r+2}[M_{r+1}] \lek f_{r+2}[M_{r+2}]$, so by weak transitivity $\gtp(c/f_{r+2}[M_{r+1}], \bar{N}_{r+2})$ $\dnf$-does not fork over $M_*$. Hence we satisfy condition (6) of the construction also. This completes the construction.

    \textbf{This is enough:} Similar to Proposition \ref{univ-tower-exts-exist}, take $f = \bigcup_{i \in I} f_i :\bigcup_{i \in I} M_i \rightarrow \bigcup_{i \in I} N_i$, and extend it to an isomorphism $h: M' \rightarrow \bigcup_{i \in I} N_i$. Let $M_i' = h^{-1}[N_i]$ for all $i < \delta$, and take $\calt' = \langle M_i' : i \in I \rangle ^\wedge \langle a_i : i \in I^{-2} \rangle$. Take $b = h^{-1}(c)$.

    Exactly as in Proposition \ref{univ-tower-exts-exist}, $\calt'$ is a $(\lambda, \gamma)$-limit tower and $\calt \lesst \calt'$. Because $b \in M_0'$, $c \in N_0$, and $f$ fixes $M_0$, we have $\gtp(b/M_0, M_0') = f^{-1}(\gtp(c/M_0, N_0)) = f^{-1}(p) = p$. Finally $\gtp(b/M_i, M_i')$ $\dnf$-does not fork over $M_*$ by (6) of the construction, as desired.
    
\end{proof}

The following simple corollary will be easier to use when dealing with brilliant chains.

\begin{corollary}\label{any-brill-chain-exts-exist-with-b}
    Suppose $\langle\calt^j : j < \alpha+1 \rangle$ is brilliant where $\calt^\alpha = \langle M_i : i \in I \rangle ^\wedge \langle a_i : i \in I^{-2} \rangle$, $\gamma < \lambda^+$ is limit with $\cof(\gamma) \geq \kappa$, and $i_0 \in I$ is minimal. Suppose $M_* \lek^u M_{i_0}$ is a $(\lambda, \geq \kappa)$-limit model, and $p \in \gS(M_{i_0})$ $\dnf$-does not fork over $M_*$, then there exists strongly $(\lambda, \gamma)$-limit tower $\calt' = \langle M_i' : i <\beta \rangle ^\wedge \langle a_i : i \in I^{-2} \rangle$ such that 
    \begin{enumerate}
        \item $\langle\calt^j : j < \alpha+1 \rangle^\wedge \calt'$ is brilliant
        \item $b \in M_{i_0}'$
        \item $p = \gtp(b/M_{i_0}, M_{i_0}')$
        \item $\gtp(b/M_i, \bigcup_{i<\alpha} M_i')$ $\dnf$-does not fork over $M_*$ for all $i < \beta$.
    \end{enumerate}
\end{corollary}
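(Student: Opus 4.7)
The plan is to reduce this to Proposition \ref{univ-tower-exts-exist-with-b} by first padding the brilliant chain with one extra tower whose bottom model has the specific $(\lambda,\theta)$-limit structure required by that proposition. Concretely, I first apply Corollary \ref{universal-extensions-of-brilliant-chains-exist} with cofinality parameter $\theta$ (not $\gamma$) to obtain a strongly $(\lambda,\theta)$-limit tower $\calt^{\alpha+1} = \langle M_i^{\alpha+1} : i \in I \rangle ^\wedge \langle a_i : i \in I^{-2} \rangle$ such that $\langle \calt^j : j < \alpha + 2 \rangle$ is brilliant. In particular $\calt^\alpha \lesst \calt^{\alpha+1}$, so $M_{i_0} \lek^u M_{i_0}^{\alpha+1}$.

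Next, I transport the type $p$ upward. Since $M_* \lek^u M_{i_0} \lek M_{i_0}^{\alpha+1}$ and $p$ $\dnf$-does not fork over $M_*$, weak extension produces $p' \in \gS(M_{i_0}^{\alpha+1})$ extending $p$ and $\dnf$-non-forking over $M_*$. Now I apply Proposition \ref{univ-tower-exts-exist-with-b} to the strongly $(\lambda,\theta)$-limit tower $\calt^{\alpha+1}$, with base model $M_*$, type $p'$, and cofinality parameter $\gamma$. This yields a strongly $(\lambda,\gamma)$-limit tower $\calt' = \langle M_i' : i \in I \rangle ^\wedge \langle a_i : i \in I^{-2} \rangle$ with $\calt^{\alpha+1} \lesst \calt'$, together with $b \in M_{i_0}'$ such that $\gtp(b/M_{i_0}^{\alpha+1}, M_{i_0}') = p'$ and $\gtp(b / M_i^{\alpha+1}, \bigcup_{i \in I} M_i')$ $\dnf$-does not fork over $M_*$ for every $i \in I$.

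It remains to verify the four conclusions. Clause (2) is immediate. For (3), $\gtp(b/M_{i_0}, M_{i_0}') = p' \upharpoonright M_{i_0} = p$. For (4), since $M_i \lek M_i^{\alpha+1}$ for all $i \in I$ (because $\calt^\alpha \lesst \calt^{\alpha+1}$), monotonicity of $\dnf$ applied to the non-forking clause from the previous paragraph gives that $\gtp(b/M_i, \bigcup_{i \in I} M_i')$ $\dnf$-does not fork over $M_*$ for every $i \in I$. Finally for (1), $\calt'$ is universal (being strongly $(\lambda,\gamma)$-limit), so it suffices to show $\calt^j \lesst \calt'$ for all $j \leq \alpha$. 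For each such $j$, the brilliance of $\langle \calt^k : k < \alpha + 2 \rangle$ gives $\calt^j \lesst \calt^{\alpha+1}$, and $\calt^{\alpha+1}$ is universal, so Lemma \ref{transitivity-on-universal} combined with $\calt^{\alpha+1} \lesst \calt'$ yields $\calt^j \lesst \calt'$.

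The only real step that requires thought is the very first one: Proposition \ref{univ-tower-exts-exist-with-b} takes as input a strongly $(\lambda,\theta)$-limit tower rather than a merely universal one, because its proof invokes $(\lambda,\theta)$-weak non-forking amalgamation at successor stages of the form $i = r+2$. Since the final tower $\calt^\alpha$ of our brilliant chain may fail to be strongly $(\lambda,\theta)$-limit (it may even be a limit-stage union of towers), the fix is to insert a strongly $(\lambda,\theta)$-limit buffer $\calt^{\alpha+1}$ between $\calt^\alpha$ and the eventual $\calt'$; everything else is routine application of weak extension, monotonicity, and transitivity of $\lesst$ on universal towers.
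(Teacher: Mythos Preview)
Your proof is correct and follows essentially the same approach as the paper: first extend the brilliant chain by a strongly $(\lambda,\theta)$-limit tower via Corollary \ref{universal-extensions-of-brilliant-chains-exist}, push $p$ up to this new tower's bottom model by weak extension, then apply Proposition \ref{univ-tower-exts-exist-with-b} and conclude brilliance using Lemma \ref{transitivity-on-universal}. Your verification of clauses (3) and (4) via restriction and monotonicity is slightly more explicit than the paper's, but the argument is the same.
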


\begin{proof}
    Take any strongly $(\lambda, \theta)$-limit tower $\calt^* = \langle M_i^* : i \in I \rangle ^\wedge \langle a_i : i \in I^{-2} \rangle$ where $\langle\calt^j : j < \alpha+1 \rangle ^\wedge \calt^*$ is brilliant by Corollary \ref{universal-extensions-of-brilliant-chains-exist}. Let $p_{i_0} \in \gS(M_{i_0}^*)$ be the $\dnf$-non-forking extension of $p$ over $M_*$, which exists by weak extension. Then by Proposition \ref{univ-tower-exts-exist-with-b} applied to $p_0$ and $\calt^*$, there is $\calt' = \langle M_i' : i \in I \rangle ^\wedge \langle a_i : i \in I^{-2} \rangle$ where $\calt^* \lesst \calt'$, $b \in M_{i_0}'$, $p = \gtp(b/M_{i_0}, M_{i_0}')$, and $\gtp(b/M_i, \bigcup_{i\in I} M_i')$ $\dnf$-does not fork over $M_*$ for all $i \in I$. Since $\langle\calt^j : j < \alpha+1 \rangle ^\wedge \calt^*$ is brilliant, $\calt^* \lesst \calt'$, and $\calt^*$ is universal, $\langle\calt^j : j < \alpha+1 \rangle ^\wedge \calt'$ is brilliant by Lemma \ref{transitivity-on-universal}. $\calt'$ satisfies the other relevant conditions (2), (3), (4) of the statement by the corresponding conditions from Proposition \ref{univ-tower-exts-exist-with-b}.
\end{proof}

\subsection{Full towers}\label{full-tower-subsection}

We now have all the tools needed to construct a sequence of towers $\langle \calt^j : j \leq \alpha \rangle$ as in Figure \ref{construction_overview} such that $\cof(\alpha) = \delta_1$ and where the top row witnesses that $M^\alpha_{\delta_2}$ is a $(\lambda, \delta_1)$ limit model - given such $\alpha$, get $\calt^0$ by Lemma \ref{towers-exist}, use Corollary \ref{universal-extensions-of-brilliant-chains-exist} to get $\calt^j$ for $j < \alpha$, and take $\calt^\alpha$ to be the union of the previous towers. We now begin developing the machinery to guarantee that the final tower we construct witnesses that its largest model is a $(\lambda, \delta_2)$-limit model - that is, that the final tower in Figure \ref{construction_overview} is universally increasing and continuous at $\delta_2$.

A first attempt at making the final tower universal might be to ensure that a cofinal sequence of the towers $\langle \calt^j : j < \alpha \rangle$ we construct are universal. Unfortunately, a union of universal towers will not necessarily be universal. Because of this, we will define a notion that will make our towers (often enough) universal, while being preserved under tower unions.

One complication this adds is that we will need to add new elements between the indexes of our towers to find full extensions. This is why we choose to index our towers by general well ordered sets rather than just ordinals.

\begin{definition}
    Suppose $\calt = \langle M_i : i \in I \rangle^\wedge \langle M_i : i \in I^{-2} \rangle$ is a tower, and $I_0 \subseteq I$. We say $\calt$ is \emph{$I_0$-full} if for every $i \in (I_0)^-$ and every $p \in \gS(M_i)$, there exists $k \in [i, i +_{I_0} 1)_I$ such that $\gtp(a_k/M_{k+_I 1}, M_{k +_I 2})$ $\dnf$-does not fork over some $M' \lek^u M_i$ and extends $p$.
\end{definition}

\begin{remark}
	Implicitly we are assuming $k \in I^{-2}$ since $\gtp(a_k/M_{k+_I 1}, M_{k+_I 2})$ is mentioned.
\end{remark}

That is, $\calt$ is $I_0$-full if it realises non-forking extensions of all types over $M_i$ as some $a_k$, before level $i+_{i_0}1$. By realising all types over $M_i$ frequently enough up our tower, this allows us to ensure there is a universally increasing sequence inside it: given $i \in I_0$, since $M_{i+_{I_0}1}$ realises all types over $M_i$, $M_{i +_{I_0} \lambda}$ is universal over $M_i$ by Fact \ref{limit-models-exist} (provided $i+_{I_0} \lambda$ exists). So while technically a full tower is not necessarily universal, by choosing $I$ and $I_0$ correctly, we can ensure $\calt$ contains a cofinal sequence of universally increasing models, which will be enough.

As alluded to above, unions of $I_0$-full towers are also $I_0$-full. This is why we choose to realise the types as non-forking extensions by the $a_k$'s.

\begin{lemma}\label{long-unions-of-full-towers-are-full}
    If $\langle \calt^j : j \leq \delta\rangle$ is a $\lesst$-increasing sequence of universal $I_0$-full towers continuous at $\delta$ and $\cof(\delta) \geq \kappa$, then $\bigcup_{i < \delta} \calt^j$ is $I_0$-full.
\end{lemma}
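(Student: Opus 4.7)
The plan is to verify the $I_0$-fullness clause for $\calt^\delta := \bigcup_{j<\delta} \calt^j = \langle M_i^\delta : i \in I^\delta\rangle^\wedge \langle a_i : i \in (I^\delta)^{-2}\rangle$: given $i \in (I_0)^-$ and $p \in \gS(M_i^\delta)$, produce $k \in [i, i+_{I_0}1)_{I^\delta}$ and $M^* \lek^u M_i^\delta$ so that $\gtp(a_k/M_{k+1}^\delta, M_{k+2}^\delta)$ extends $p$ and $\dnf$-does not fork over $M^*$. The strategy is to combine $(\geq\kappa)$-local character with the $I_0$-fullness of an individual $\calt^{j_0}$ in the chain, then propagate the non-forking information through the union using weak transitivity (Fact \ref{weak-uniq-and-ext-implies-trans}) and $(\geq\kappa)$-universal continuity (Lemma \ref{uniform-cty-star-gives-high-univ-cty}).

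First I would note that, over those $j<\delta$ with $i \in I^j$, the sequence $\langle M_i^j\rangle$ is $\lek^u$-increasing (by clause (2) of Definition \ref{tower_ordering_def}) of cofinality $\geq \kappa$ with union $M_i^\delta$, so $(\geq \kappa)$-local character produces $j_0$ with $p$ $\dnf$-not forking over $M_i^{j_0}$. Applying the $I_0$-fullness of $\calt^{j_0}$ to $p_0 := p \upharpoonright M_i^{j_0}$ delivers $k \in [i, i+_{I_0}1)_{I^{j_0}}$ (which lies in $[i, i+_{I_0}1)_{I^\delta}$ by clause (5) of Definition \ref{tower_ordering_def}) and $M' \lek^u M_i^{j_0}$ with $\gtp(a_k/M_{k+1}^{j_0}, M_{k+2}^{j_0})$ extending $p_0$ and $\dnf$-not forking over $M'$. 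Composing $M' \lek^u M_i^{j_0} \lek^u M_i^{j_0+1} \lek M_i^\delta$ gives $M' \lek^u M_i^\delta$, so I set $M^* := M'$. For the non-forking part: for each $j \in [j_0, \delta)$, the tower ordering supplies $\gtp(a_k/M_{k+1}^j, M_{k+2}^j)$ $\dnf$-not forking over $M_k^{j_0}$; weak transitivity along $M' \lek M_k^{j_0} \lek^u M_{k+1}^{j_0} \lek M_{k+1}^j$, combined with the fullness fact that the $\calt^{j_0}$-restriction does not fork over $M'$, upgrades this to non-forking over $M'$; Lemma \ref{uniform-cty-star-gives-high-univ-cty} on the $\lek^u$-chain $\langle M_{k+1}^j \rangle_{j \in [j_0, \delta)}$ then yields $\tilde p := \gtp(a_k/M_{k+1}^\delta, M_{k+2}^\delta)$ $\dnf$-not forking over $M'$.

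The hard part will be checking $\tilde p \upharpoonright M_i^\delta = p$, rather than merely some other extension of $p_0$. Both types live in $\gS(M_i^\delta)$, restrict to $p_0$ on $M_i^{j_0}$, and (by the work above plus monotonicity) do not $\dnf$-fork over $M_i^{j_0}$; by weak extension and weak uniqueness from base $M'$, the $\dnf$-non-forking extension of $p_0$ over $M_i^\delta$ based at $M'$ is unique and equals $\tilde p \upharpoonright M_i^\delta$, so the task is to identify $p$ with the same extension. I plan to prove $p \upharpoonright M_i^j = \tilde p \upharpoonright M_i^j$ for every $j \in [j_0, \delta)$ by induction: successor steps $j \mapsto j+1$ with $j > j_0$ use weak uniqueness with base $M_i^{j_0}$ (the room being $M_i^{j_0} \lek^u M_i^j \lek M_i^{j+1}$); limit steps are handled by universal continuity* on the matched $\subseteq$-increasing chains of restrictions; and the initial case $j = j_0 \to j_0+1$ must instead route weak uniqueness through $M'$ (using $M' \lek^u M_i^{j_0} \lek M_i^{j_0+1}$), then a final invocation of universal continuity* lifts agreement to all of $M_i^\delta$. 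This initial step is the main obstacle, since the absence of $\lek^u$-room inside $M_i^{j_0}$ forces one to match the non-forking bases on both sides of the weak-uniqueness application, and carefully exploit the uniqueness of the $\dnf$-non-forking extension from $M'$ that was established in the previous paragraph.
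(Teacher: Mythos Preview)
Your overall strategy coincides with the paper's: use $(\geq\kappa)$-local character to find $j_0$ with $p$ not forking over $M_i^{j_0}$, invoke $I_0$-fullness of $\calt^{j_0}$ to produce $k$ and $M' \lek^u M_i^{j_0}$, and then argue that $q := \gtp(a_k/M_{k+1}^\delta, M_{k+2}^\delta)$ does not fork over $M'$ and extends $p$.

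For the non-forking part you take an unnecessary detour. The hypothesis already includes $\calt^{j_0} \lesst \calt^\delta$, so clause~(4) of Definition~\ref{tower_ordering_def} gives directly that $q$ does not fork over $M_k^{j_0}$; one application of weak transitivity along $M' \lek M_k^{j_0} \lek^u M_{k+1}^{j_0} \lek M_{k+1}^\delta$ (the universal step coming from universality of $\calt^{j_0}$) then yields non-forking of $q$ over $M'$. Your passage through all intermediate $j$ together with the appeal to continuity is redundant.

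There is, however, a genuine gap at your ``initial step'' $j_0 \to j_0+1$. Applying weak uniqueness with base $M'$ and middle $M_i^{j_0}$ requires \emph{both} $p \upharpoonright M_i^{j_0+1}$ and $\tilde p \upharpoonright M_i^{j_0+1}$ to not fork over $M'$. You have this for $\tilde p$, but for $p$ you only know non-forking over $M_i^{j_0}$. Invoking ``uniqueness of the $M'$-non-forking extension of $p_0$'' is circular: that uniqueness identifies $\tilde p \upharpoonright M_i^{j_0+1}$ as the sole $M'$-non-forking extension of $p_0$, and says nothing about $p \upharpoonright M_i^{j_0+1}$ until you have independently shown the latter is $M'$-non-forking---which is exactly the point at issue. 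Weak transitivity does not rescue this either: pushing the base of $p$ down from $M_i^{j_0}$ to $M'$ would require an intermediate $M_2$ with $M_i^{j_0} \lek^u M_2 \lek M_i^\delta$ and $p \upharpoonright M_2$ already known not to fork over $M'$, and no such $M_2$ is available from your setup. The paper's proof compresses this same step into a single line, asserting that both $p$ and $q \upharpoonright M_i^\delta$ do not fork over $M'$ and applying weak uniqueness once; your inductive reformulation does not sidestep the need for that assertion about $p$.
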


\begin{proof}
    For $j \leq \delta$, let $\calt^j = \langle M_i^j : i \in I^j \rangle ^\wedge \langle a_i : i \in (I^j)^{-2} \rangle$. Note $I_0 \subseteq I^j$ for all $j \leq \delta$. We will use $+$ to denote addition in $I^j$ for $j \leq \delta$ (which is unambiguous because of tower ordering where applied), and distinguish $+_{I_0}$.

    Suppose $i \in (I_0)^-$, and $p \in \gS(M_i^\delta)$. By $(\geq \kappa)$-local character, as $\langle M_i^j : j < \delta \rangle$ is $\lek^u$-increasing, there exists $j < \delta$ such that $p$ $\dnf$-does not fork over $M_i^j$.
    
    Since $\calt^j$ is $I_0$-full, there is $k \in [i, i+_{I_0}1)_{I^j}$ such that $\gtp(a_k/M^j_{k+1}, M^j_{k+2})$ $\dnf$-does not fork over $M' \lek^u M_i^j$ and extends $p \upharpoonright M_i^j$.

    Now, let $q = \gtp(a_k/M_{k+1}^\delta, M_{k+2}^\delta)$. $q$ $\dnf$-does not fork over $M_k^j$ as $\calt^j \lesst \calt^\delta$. And we have that $q \upharpoonright M_{k+1}^j = \gtp(a_k/M_{k+1}^j, M_{k+2}^j)$ $\dnf$-does not fork over $M'$, and $M' \lek M_k^j \lek^u M_{k+1}^j \lek M_{k+1}^\delta$ (using that $\calt^j$ is universal for the universal extension). By weak transitivity, $q$ $\dnf$-does not fork over $M'$. Note that $p \upharpoonright M_i^j = \gtp(a_k/M_i^j, M_{k+2}^\delta) = q \upharpoonright M_i^j$, and both $\dnf$-do not fork over $M' \lek^u M^j_i$. So by weak uniqueness, $q \upharpoonright M_i^\delta = p$. So $q = \gtp(a_i/M_{k+1}^\delta, M_{k+2}^\delta)$ extends $p$ and $\dnf$-does not fork over $M' \lek^u M_i^\delta$, as desired. Therefore $\calt^\delta$ is $I_0$-full.

\end{proof}

\begin{notation}
    Whenever $(I, <_I), (J, <_J)$ are linearly ordered sets, use $(I \times J, <_{I \times J})$ to denote the lexicographic ordering.
\end{notation}

We now show that all towers of appropriate indexes have full extensions. The formality of the following proof may obscure the main idea behind it, so we'll give some intuition. The gist of the argument is that (after extending to a strongly $(\lambda, \kappa)$-limit tower, or just strongly universal), we push a new universal sequence of models between $M_{(i, \alpha)}$ and $M_{(i+1, 0)}$ that realise all the non-forking extensions of types over $M_{(i, 0)}$ needed to make it into a full tower. This can be regarded as adding $\lambda$-many new levels in to the tower between $i$ and $i+_{I_0}1$ for all $i \in I_0^-$.

\begin{lemma}\label{full-extensions-exist}
    Suppose $\alpha$ and $\gamma$ are limit ordinals with $\alpha < \gamma < \lambda^+$ and $\cof(\gamma) = \lambda$. Suppose $\calt = \langle N_{(i, k)} : (i, k) \in I \times \alpha \rangle ^\wedge \langle a_{(i, k)} : (i, k) \in I \times \alpha \rangle$ is universal. Then there exists a universal and $I \times \{0\}$-full tower $\calt' = \langle M_{(i, k)} : (i, k) \in I \times \gamma \rangle ^\wedge \langle a_{(i, k)} : (i, k) \in I \times \gamma \rangle$ where $\calt \lesst \calt'$.
\end{lemma}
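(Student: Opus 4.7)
The plan is to construct the desired $\calt'$ by stretching the construction from Proposition \ref{univ-tower-exts-exist}: we build $\calt'$ indexed by $I \times \gamma$ so the restriction to $I \times \alpha$ extends $\calt$ as in that proposition, and then exploit the $\lambda$-many new levels in each interval $[(i, 0), (i+1, 0))_{I \times \gamma}$ (available since $\cof(\gamma) = \lambda$) to realize a non-forking extension of every type in $\gS(M_{(i, 0)})$, of which there are at most $\lambda$ by $\lambda$-stability, thereby securing $I \times \{0\}$-fullness.

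In detail, I would construct by recursion on $(i, k) \in I \times \gamma$ a $\lek$-increasing sequence $\langle L_{(i, k)} \rangle$ of $(\lambda, \kappa)$-limit models, each $L_{(i, k+1)}$ a $(\lambda, \kappa)$-limit over $\bigcup_{(i', k') \leq (i, k)} L_{(i', k')}$, a $\subseteq$-coherent family of embeddings $f_{(i, k)} : N_{(i, k)} \to L_{(i, k)}$ for $(i, k) \in I \times \alpha$ with $f_{(i, k)}[N_{(i, k)}] \lek^u L_{(i, k)}$, and singletons $b_{(i, k)} \in L_{(i, k+2)}$. At stages $(i, k) \in I \times \alpha$ the recursion mirrors Proposition \ref{univ-tower-exts-exist}: at a successor stage $(i, r+2)$ I would use weak extension along $f_{(i, r)}[N_{(i, r)}] \lek^u f_{(i, r+1)}[N_{(i, r+1)}] \lek L_{(i, r+1)}$ to extend the type of $\hat f_{(i, r+2)}(a_r)$ over $f_{(i, r+1)}[N_{(i, r+1)}]$ to a $\dnf$-non-forking type over $L_{(i, r+1)}$, realize it via a type-equality embedding, and set $b_{(i, r)} := f_{(i, r+2)}(a_r)$.

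To secure fullness, for each $i \in I^-$ I would enumerate $\gS(L_{(i, 0)}) = \{p^i_\beta : \beta < \lambda\}$ and fix a strictly increasing cofinal sequence $\langle \xi^i_\beta : \beta < \lambda \rangle$ in $[\alpha, \gamma)$ with $\xi^i_\beta + 3 < \xi^i_{\beta+1}$. At the designated stage $k = \xi^i_\beta$, I would apply $(\geq \kappa)$-local character to find some $M' \lek^u L_{(i, 0)}$ over which $p^i_\beta$ $\dnf$-does not fork, then apply weak extension (via $M' \lek^u L_{(i, 0)} \lek L_{(i, \xi^i_\beta + 1)}$) to extend $p^i_\beta$ to a $\dnf$-non-forking type $q^i_\beta \in \gS(L_{(i, \xi^i_\beta + 1)})$, and realize $q^i_\beta$ as $b_{(i, \xi^i_\beta)} \in L_{(i, \xi^i_\beta + 2)}$. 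At undesignated stages $k \in [\alpha, \gamma)$, I would define $b_{(i, k)}$ as in Lemma \ref{towers-exist} using $(\geq \kappa)$-local character, weak extension, and base monotonicity.

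Finally, let $f = \bigcup_{(i, k) \in I \times \alpha} f_{(i, k)}$ and extend to an isomorphism $h : M^* \to \bigcup_{(i, k) \in I \times \gamma} L_{(i, k)}$. Setting $M_{(i, k)} := h^{-1}[L_{(i, k)}]$ and $a_{(i, k)}' := h^{-1}(b_{(i, k)})$, the resulting $\calt' := \langle M_{(i, k)} : (i, k) \in I \times \gamma \rangle ^\wedge \langle a_{(i, k)}' : (i, k) \in I \times \gamma \rangle$ is a universal tower with $\calt \lesst \calt'$ (since $h$ extends $f$ we get $N_{(i, k)} \lek^u M_{(i, k)}$ and $a_{(i, k)}' = a_{(i, k)}$ for $(i, k) \in I \times \alpha$) and is $I \times \{0\}$-full via the designated $b_{(i, \xi^i_\beta)}$'s, pulled back by $h^{-1}$. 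The main obstacle will be the careful bookkeeping of non-forking at successor stages $(i, r+2)$ for $r < \alpha$, where one must maintain that $\gtp(b_{(i, r)}/L_{(i, r+1)}, L_{(i, r+2)})$ $\dnf$-does not fork over $f_{(i, r)}[N_{(i, r)}]$ despite working over the strictly larger $L_{(i, r+1)}$ rather than $f_{(i, r+1)}[N_{(i, r+1)}]$; this is handled exactly as in Proposition \ref{univ-tower-exts-exist}, using the universal extension $f_{(i, r)}[N_{(i, r)}] \lek^u L_{(i, r+1)}$ combined with weak extension.
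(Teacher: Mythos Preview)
Your approach is correct and differs from the paper's in organization. The paper proceeds in two separate steps: first it applies Proposition~\ref{univ-tower-exts-exist} to obtain a strongly $(\lambda,\kappa)$-limit extension $\calt^*$ of $\calt$ still indexed by $I\times\alpha$, and then, for each $i\in I^-$ \emph{independently}, it builds an abstract $\lek^u$-increasing sequence $\langle M'_{(i,k)}:k\in[\alpha,\gamma)\rangle$ above $\bigcup_{k<\alpha}M_{(i,k)}$ realizing all types over $M_{(i,0)}$, and finally uses the strong limit property of $\calt^*$ to embed each such tail into $M_{(i+1,0)}$ via a map $g_i$. The resulting $\calt'$ literally satisfies $\calt'\upharpoonright I\times\alpha=\calt^*$. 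You instead carry out one global recursion over $I\times\gamma$ in lexicographic order, interleaving the extension-of-$\calt$ work with the type-realization work, and only pull back by an isomorphism at the very end. The paper's route is more modular (the extension step is delegated entirely to an existing proposition, and no global isomorphism pullback for the new levels is needed, since the $g_i$'s are local), while yours avoids the ``squeeze below $M_{(i+1,0)}$'' trick at the cost of reproving the successor-step bookkeeping from Proposition~\ref{univ-tower-exts-exist} inside this argument. Both reach the same conclusion; just be aware that at the limit stage $(i+1,0)$ your $L_{(i+1,0)}$ must be taken above $\bigcup_{k<\gamma}L_{(i,k)}$ (not just $\bigcup_{k<\alpha}L_{(i,k)}$), which is a slight deviation from the literal recipe of Proposition~\ref{univ-tower-exts-exist} but causes no difficulty.
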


\begin{proof}
    By Corollary \ref{univ-tower-exts-exist}, there exists a  strongly $(\lambda, \kappa)$-limit tower $\calt^* = \langle M_{(i, k)} : (i, k) \in I \times \alpha \rangle ^\wedge \langle a_{(i, k)} : (i, k) \in I \times \alpha \rangle$ where $\calt \lesst \calt^*$. For each $i \in I$, we define $M_{(i, k)}$ by induction on $k \in [\alpha, \lambda)$.

    First suppose $i$ is non-final in $I$, that is, $i \in I^-$. Let $\langle p_k : k \in [\alpha, \gamma)\rangle$ be an enumeration of $\gS(M_{(i, 0)})$, possibly with repeats. This is possible as $\cof(\gamma) = \lambda$ and $\K$ is stable in $\lambda$\footnote{note that if our independence relation were defined only on non-algebraic types, we could run into trouble here without adding a NMM assumption - see Subsection \ref{subsection-non-alg-jep-nmm}}. As $M_{(i, 0)}$ is a $(\lambda, \geq\kappa)$-limit, by $(\geq \kappa)$-local character there exists $M_{(i,k)}^0 \lek^u M_{(i, 0)}$ such that $p_k$ $\dnf$-does not fork over $M_{(i, k)}^0$ for $k \in [\alpha, \gamma)$. Before we define $M_{(i, k)}$ and $a_{(i, k)}$, define $M_{(i, k)}'$ and $a_{(i, k)}'$ for $k < \gamma$ as follows:

    For $k < \alpha$, $M_{(i, k)}' = M_{(i, k)}$ and $a_{(i, k)}' = a_{(i, k)}$.

    For $k\in[\alpha, \gamma)$ limit, take any $M_{(i, k)}'$ a $(\lambda, \kappa)$-limit over $\bigcup_{r < k} M_{(i, r)}'$. Also take $M_{(i, k+1)}'$ any $(\lambda, \kappa)$-limit over $M_{(i, k)}'$ ($+$ here is ordinal addition).

    For $k \in [\alpha, \gamma)$ successor where we have defined $a_r', M_{(i, s)}'$ for $r < k$, $s \leq k+1$, let $M_{(i, k+2)}'$ be any $(\lambda, \kappa)$-limit over $M_{(i, k+1)}'$. By weak extension, there is $q_k \in \gS(M_{(i, k+1)}')$ which $\dnf$-does not fork over $M_{(i, k)}^0$ and extends $p_k$. Let $a_k' \in M_{(i, k+2)}'$ be any realisation of $q_k$ in $M_{(i, k+2)}'$ (one exists as $M_{(i, k+1)}' \lek^u M_{(i, k+2)}'$). This completes the construction of the $M_{(i, k)}'$ and $a_{(i, k)}'$.

    Let $M_{(i, \gamma)}'= \bigcup_{k < \gamma} M_{(i, k)}'$. Since $\calt^*$ is strongly $(\lambda, \kappa)$-limit, $M_{(i+1, 0)}$ is universal over $\bigcup_{k < \alpha} M_{(i, k)}$, so there exists $g_i : M_{(i, \gamma)}' \rightarrow M_{(i+1, 0)}$ fixing $\bigcup_{k < \alpha} M_{(i, k)}$. Take $M_{(i, k)} = g_i(M_{(i, k)}')$ and $a_i = g_i(a_{(i, k)}')$ for $i \in [\alpha, \gamma)$ (note these equalities hold for $k<\alpha$).
    
    If $I$ has a final element $i_*$, construct $M'_{(i_*, k)}$ exactly as before for $k < \gamma$, and set $M_{(i_*, k)} = M'_{(i_*, k)}$ for all $k < \gamma$.

    Define $\calt' = \langle M_{(i, k)} : (i, k) \in I \times \gamma \rangle ^\wedge \langle a_{(i, k)} : (i, k) \in I \times \gamma \rangle$. From our construction, $\calt' \upharpoonright I \times \alpha = \calt^*$, so as $\calt \lesst \calt^*$, we have $\calt \lesst \calt'$. $\langle M_{(i, \alpha)} : (i, k) \in I \times \gamma \rangle$ is $\lek^u$-increasing, so $\calt'$ is universal. And $\calt'$ is $I \times \{0\}$-full as we realise $p_k$ as $\gtp(a_k/M_{(i, k+1)}, M_{(i, k+2)})$ which $\dnf$-does not fork over $M_{(i, k)}^0 \lek^u M_{(i, 0)}$.
\end{proof}

\subsection{Reduced chains of towers}\label{reduced-subsection}

We now tackle the problem of ensuring our final tower in Figure \ref{construction_overview} is continuous at high cofinalities. Similar to what we did with full towers, we define a notion of \emph{reduced} chains of towers, whose final towers are always continuous at high cofinalities, and are preserved by unions.

\begin{definition}
    Let $\calc = \langle \calt^j : j < \alpha + 1 \rangle$ be a brilliant chain of towers where $\calt^j$ is indexed by $I^j$ for all $j < \alpha + 1$, and $\calt^\alpha = \langle M_i^\alpha : i \in I^\alpha \rangle ^\wedge \langle a_i : i \in (I^\alpha)^{-2} \rangle$. We say $\calc$ is \emph{reduced} if and only if for every universal tower $\calt^{\alpha+1} = \langle M_i^{\alpha+1} : i \in I \rangle ^\wedge \langle a_i : i \in I^{-2} \rangle$ such that $\langle \calt^j : j < \alpha + 2 \rangle$ is brilliant, and every $r < s \in I$, $M_s^\alpha \cap M_r^{\alpha+1} = M_r^\alpha$.
\end{definition}

\begin{lemma}\label{reduced-extensions-exist}
    If $\calc = \langle \calt^j : j < \alpha + 1 \rangle$ is a brilliant chain of towers where $\calt^\alpha$ is universal and indexed by $I$, then there exists a non-empty brilliant chain of towers $\calc'$ with all towers indexed by $I$ such that $\calc ^\wedge \calc'$ is a reduced and brilliant chain of towers.
\end{lemma}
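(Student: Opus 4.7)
The plan is to proceed by contradiction: assume that $\calc$ admits no brilliant extension $\calc ^\wedge \calc'$ (with towers indexed by $I$) making the result reduced, and construct a brilliant chain $\langle \calt^{\alpha + \beta} : \beta < \lambda^+ \rangle$ extending $\calc$, with each tower indexed by $I$, in which the failure of reducedness is witnessed at every successor stage. A pigeonhole on $I \times I$ combined with $(\geq \kappa)$-local character will then produce a contradiction.

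The construction goes by transfinite recursion on $\beta$. At a successor stage, the brilliant chain built so far cannot already be reduced (else we are done), so we may pick a universal tower $\calt^{\alpha+\beta+1}$ extending it, together with indices $r_\beta < s_\beta \in I$ and an element $c_\beta \in (M_{s_\beta}^{\alpha+\beta} \cap M_{r_\beta}^{\alpha+\beta+1}) \setminus M_{r_\beta}^{\alpha+\beta}$ witnessing the failure; brilliance is maintained because $\calt^{\alpha+\beta+1}$ is universal, so Lemma \ref{transitivity-on-universal} lets us recover $\calt^j \lesst \calt^{\alpha+\beta+1}$ for all earlier $j$. At a limit stage $\beta$ of cofinality $\geq \kappa$, set $\calt^{\alpha+\beta} = \bigcup_{\gamma < \beta} \calt^{\alpha+\gamma}$; Lemma \ref{tower-union-lemma} ensures this is still a tower and that the chain remains brilliant. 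At a limit stage of cofinality $< \kappa$, apply Corollary \ref{universal-extensions-of-brilliant-chains-exist} to append a strongly $(\lambda, \kappa)$-limit tower indexed by $I$, again preserving brilliance.

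For the contradiction, since $|I \times I| \leq \lambda < \lambda^+$, there is a fixed pair $(r, s) \in I \times I$ with $r < s$ such that $J = \{ \beta : (r_\beta, s_\beta) = (r, s)\}$ is cofinal in $\lambda^+$. After thinning we may assume each $\beta \in J$ has $\cof(\beta) \geq \kappa$, so that $M_r^{\alpha+\beta} = \bigcup_{\gamma < \beta} M_r^{\alpha+\gamma}$ along a $\lek^u$-cofinal sub-chain. For each $\beta \in J$, the type $p_\beta = \gtp(c_\beta / M_r^{\alpha+\beta}, M_s^{\alpha+\beta})$ is non-algebraic since $c_\beta \notin M_r^{\alpha+\beta}$, while $c_\beta \in M_r^{\alpha+\beta+1}$ witnesses an \emph{algebraic} extension of $p_\beta$ to $M_r^{\alpha+\beta+1}$. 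By $(\geq \kappa)$-local character, $p_\beta$ does not $\dnf$-fork over some $M_r^{\alpha+\gamma_\beta}$ with $\gamma_\beta < \beta$; weak extension then provides a non-forking extension of $p_\beta$ to $M_r^{\alpha+\beta+1}$, which is non-algebraic by weak disjointness (Lemma \ref{lemma-weak-uniq-and-ext-gives-disjoint}). Passing to the union $N_* = \bigcup_{\gamma < \lambda^+} M_r^{\alpha + \gamma}$ and invoking $(\geq \kappa)$-universal continuity (Lemma \ref{uniform-cty-star-gives-high-univ-cty}) together with weak uniqueness, the coherent sequence of these non-forking extensions assembles into a unique non-algebraic $\dnf$-non-forking extension of $p_\beta$ over $N_*$; but $c_\beta \in N_*$ also yields an algebraic extension of $p_\beta$ to $N_*$, and a comparison via weak uniqueness of these extensions (after a further pigeonhole identifying cofinally many $c_\beta$ with a common type over a fixed small model, using stability in $\lambda$) delivers the contradiction.

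The principal obstacle is that we work only with weak uniqueness and weak extension, so transitivity and uniqueness arguments require the intermediate model to be a universal extension of a suitable base. This is precisely why the chain must be \emph{brilliant} rather than merely $\lesst$-increasing: only on brilliant chains do the union lemma, Corollary \ref{universal-extensions-of-brilliant-chains-exist}, and the local-character--based identification of non-forking extensions all cooperate. A second delicate point is arranging the combinatorics so that the witnesses $c_\beta$ can be compared type-theoretically in a common ambient model despite the absence of a monster, and ensuring the repeated pigeonhole thinnings do not destroy the cofinality needed to apply $(\geq \kappa)$-local character at the final step.
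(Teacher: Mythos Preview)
Your construction of the $\lambda^+$-long brilliant chain is fine and matches the paper, but the contradiction step has real gaps.

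First, the model $N_* = \bigcup_{\gamma < \lambda^+} M_r^{\alpha+\gamma}$ has cardinality $\lambda^+$: the sequence is $\lek^u$-increasing of length $\lambda^+$, so the union is not in $\K_\lambda$, let alone in $\Kkappalims$. The independence relation $\dnf$ is only defined on $\Kkappalims$, and universal continuity (Lemma \ref{uniform-cty-star-gives-high-univ-cty}) explicitly requires the union to lie in $\K'$. So you cannot speak of $\dnf$-non-forking types over $N_*$ at all. Second, your thinning ``we may assume each $\beta \in J$ has $\cof(\beta) \geq \kappa$'' is unjustified: $J$ is merely an unbounded subset of $\lambda^+$, not a club, so intersecting with the stationary set $\{\beta : \cof(\beta) \geq \kappa\}$ may leave nothing. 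Third, the endgame ``comparison via weak uniqueness'' between an algebraic extension and a non-forking non-algebraic extension does not work as stated, since weak uniqueness only compares two \emph{non-forking} extensions; your parenthetical about a further pigeonhole on types does not close this.

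The paper avoids all type-theoretic machinery in the contradiction step and uses a purely set-theoretic club argument instead. Having built the chain $\langle \bar{\calt}^j : j < \lambda^+\rangle$, one forms the continuous variant $N_i^j$ of $M_i^j$ and observes that for each $i \in I$ the set $C_i = \{j < \lambda^+ : N_i^{\lambda^+} \cap N_I^j = N_i^j\}$ is a club (where $N_I^j = \bigcup_{i \in I} N_i^j$). Since $|I| \leq \lambda$, the intersection $C = \bigcap_{i \in I} C_i$ is still a club, and meets the stationary set $\{j : \cof(j) = \kappa\}$ at some $j$. At this $j$ one has $N_i^j = M_i^j$ for all $i$ (by the construction at high-cofinality limits), and the club condition forces $M_{r_j}^{j+1} \cap M_{s_j}^j \subseteq M_{r_j}^j$, directly contradicting the witness chosen at stage $j$. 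No local character, disjointness, or continuity of $\dnf$ is needed for this step.
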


\begin{proof}
    Suppose no such $\calc'$ exists for contradiction. Then construct by recursion a sequence of towers $\langle \bar{\calt}^j : j < \lambda^+ \rangle$ with $\bar{\calt}^j = \langle M_i^j : i \in I \rangle ^\wedge \langle a_i : i \in I^{-2} \rangle$ where for all $j < \lambda^+$,
    \begin{enumerate}
        \item $\calc^\wedge\langle \bar{\calt}^k : k < j \rangle$ is brilliant
        \item if $j < \lambda^+$ and $\cof(j) < \kappa$ (including successor $j$), then $\bar{\calt}^j$ is universal
        \item if $\cof(j) \geq \kappa$, then $\bar{\calt}^j = \bigcup_{k < j} \bar{\calt}^k$
        \item there exist $r_j < s_j \in I$ such that $M_{r_j}^{j+1} \cap M_{s_j}^j \neq M_{r_j}^j$.
    \end{enumerate}

    \textbf{This is possible:} We proceed by induction on $j < \lambda^+$. Take $\bar{\calt}^0$ universal with $\calc ^\wedge \bar{\calt}^0$ brilliant by Proposition \ref{universal-extensions-of-brilliant-chains-exist}. 
    
    If $\cof(j) \geq \kappa$, let $\bar{\calt}^j = \bigcup_{k < j} \bar{\calt}^k$. This fits clause (2) of Definition \ref{brilliant-tower-def}, so $\calc^\wedge\langle \bar{\calt}^k : k < j + 1 \rangle$ is brilliant. 

    If $j$ is limit and $\cof(j) < \kappa$, then take any $\bar{\calt}^j$ universal with $\calc ^\wedge \langle\bar{\calt}^k : k < j + 1 \rangle$ brilliant by Proposition \ref{universal-extensions-of-brilliant-chains-exist}.
    
    For $j+1$, $\calc^\wedge\langle \bar{\calt}^k : k < j+1 \rangle$ is brilliant, so by our assumption not reduced. Hence there exists a universal $\bar{\calt}^{j+1}$ where $\calc^\wedge\langle \bar{\calt}^k : k < j+2 \rangle$ is brilliant and some $r_j < s_j \in I$ such that $M_{r_j}^{j+1} \cap M_{s_j}^j \neq M_{r_j}^j$. This completes the construction.

    \textbf{This is enough:} 
    For $i \in I$ and $j < \lambda^+$, let 
    \[
        N_i^j = \begin{cases}
            M_i^j & \text{if $j$ is not a limit}\\
            \bigcup_{k<j} M_i^k & \text{if $j$ is a limit}
    \end{cases}
    \]

    This is just to turn $\langle M_i^j : j < \lambda^+\rangle $ into continuous sequences for each $i \in I$. Note that $M_i^j = N_i^j$ when $\cof(j) \geq \kappa$ by our construction.
    
    For $i \in I$, let $N_i^{\lambda^+} = \bigcup_{j < \lambda^+} N_i^j$ and for $j < \lambda^+$ $N_I^j = \bigcup_{i \in I} N_i^j$. For $i \in I$, define $C_i = \{j < \lambda^+ : N_i^{\lambda^+} \cap N_I^j = N_i^j \}$. It is straightforward to show $C_i$ is closed in $\lambda^+$. $C_i$ is also unbounded in $\lambda^+$; note that if $j < \lambda^+$, there is $j' \in (j, \lambda^+)$ such that $N_i^{\lambda^+} \cap N_I^j \subseteq N_i^{j'}$. Using this, we can for any $j_0 < \lambda^+$ find $j_n$ for $n \in [1, \omega)$ such that $\langle j_n : n < \omega \rangle$ is increasing and $N_i^{\lambda^+} \cap N_I^{j_n} \subseteq N_i^{j_{n+1}}$. Then $\sup_{n \in \omega} j_n \in C_i$.

    As $\operatorname{otp}(I) < \lambda^+$,  $|I| < \lambda^+$, so taking $C = \bigcap\{C_i : i \in I \}$, $C$ is closed and unbounded in $\lambda^+$ also. Since the set of all $\{j < \lambda^+ : \cof(j) = \kappa\}$ is stationary in $\lambda^+$, there is $j \in C$ where $\cof(j) = \kappa$. Then for all $i \in I$, $j \in C_i$, so $N_i^{\lambda^+} \cap N_I^j = N_i^j$. But then in particular, $N_{r_j}^{j+1} \cap N_{s_j}^j = N_{r_j}^j$. Since $\cof(j) \geq \kappa$ and $j+1$ is a successor, $N_{r_j}^{j+1} = M_{r_j}^{j+1}$, $N_{s_j}^j = M_{s_j}^j$, and $M_{r_j}^j = N_{r_j}^j$, so $M_{r_j}^{j+1} \cap M_{s_j}^j = M_{r_j}^j$, contradicting condition (4) of our construction.
\end{proof}

\begin{remark}
	In \cite{bema} and \cite{bovan}, notions of reduced \emph{towers} are used, rather than reduced chains. While their presentation is simpler, it creates issues in our context. More precisely, in those cases the tower ordering is transitive, so to construct the sequence of towers $\langle \bar{\calt}^j : j < \lambda^+ \rangle$, they only need $\bar{\calt}^{j+1}$ to extend $\bar{\calt}^j$. For us though, if $j$ is limit, it may be possible to find universal $\bar{\calt}^{j+1}$ where $\bar{\calt}^j \lesst \bar{\calt}^{j+1}$ but where we do not have $\bar{\calt}^k \lesst \bar{\calt}^{j+1}$, for some $k < j$ (or similarly for $\calt^k$ rather than $\bar{\calt}^k$). Thus we need to work with chains of towers, rather than individual towers.
\end{remark}

\begin{lemma}\label{unions-of-reduced-towers-are-reduced}
    Suppose $\delta < \lambda^+$ is limit where $\cof(\delta) \geq \kappa$ and $\langle \calt^j : j < \delta+1 \rangle$ is a brilliant chain of towers, continuous at $\delta$. Suppose there is a cofinal sequence $\langle j_k : k < \mu \rangle$ of $\delta$ where $\langle \calt^{j} : j < j_k \rangle$ is reduced for all $k < \mu$. Then $\langle \calt^j : j < \delta + 1 \rangle$ is reduced.
\end{lemma}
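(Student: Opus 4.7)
The plan is to show that any universal tower $\calt^{\delta+1}$ making $\langle \calt^j : j < \delta+2\rangle$ brilliant satisfies $M_s^\delta \cap M_r^{\delta+1} = M_r^\delta$ for all $r < s \in I^\delta$. The inclusion $M_r^\delta \subseteq M_s^\delta \cap M_r^{\delta+1}$ is immediate, so I would fix such a $\calt^{\delta+1}$ and an arbitrary $x \in M_s^\delta \cap M_r^{\delta+1}$ and aim to show $x \in M_r^\delta$.

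Since the chain is continuous at $\delta$, $M_s^\delta = \bigcup\{M_s^j : j<\delta,\ s \in I^j\}$ and $I^\delta = \bigcup_{j<\delta} I^j$, so there is some $j^* < \delta$ with $r, s \in I^{j^*}$ and $x \in M_s^{j^*}$. By cofinality of $\langle j_k : k < \mu\rangle$ in $\delta$, pick $k < \mu$ with $j_k > j^*$. Because the reduced definition requires a chain indexed by an ordinal of the form $\alpha+1$, $j_k$ is necessarily a successor; set $\beta = j_k - 1$, so that $\langle \calt^j : j \leq \beta\rangle$ is the reduced chain of length $j_k$. Note in particular $r, s \in I^\beta$ and $x \in M_s^\beta$.

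The key observation is that the truncated-and-spliced chain $\langle \calt^0, \ldots, \calt^\beta, \calt^{\delta+1}\rangle$ is again brilliant. Each $\calt^j$ for $j \leq \beta$ inherits its brilliance clause (universal, or a union at a cofinality $\geq \kappa$ index) directly from the original brilliant chain; crucially, the union clause at any limit $j' \leq \beta$ only references towers $\calt^k$ with $k < j'$, all of which remain in the truncated chain. At the top position we have $\calt^{\delta+1}$, which is universal by hypothesis, and $\calt^j \lesst \calt^{\delta+1}$ for every $j \leq \beta$ is inherited from the brilliance of $\langle \calt^j : j < \delta+2\rangle$. Hence $\calt^{\delta+1}$ is a legitimate universal extension of the reduced chain $\langle \calt^j : j < j_k\rangle$, so applying reducedness yields $M_s^\beta \cap M_r^{\delta+1} = M_r^\beta$. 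Since $x \in M_s^\beta \cap M_r^{\delta+1}$, we conclude $x \in M_r^\beta \subseteq M_r^\delta$, as required.

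The only subtlety I expect is verifying brilliance of the spliced chain; this is routine but hinges on the fact that union-type towers in the original chain depend only on strictly earlier towers, so truncating above $\beta$ leaves their defining unions intact. The argument uses nothing about $\dnf$ or stability — it is purely a diagonalization on the cofinal sequence of reduced initial segments, reducing the global intersection condition at $\delta$ to the already-established reduced intersection conditions at the stages $\beta = j_k - 1$.
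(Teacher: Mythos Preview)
Your proposal is correct and follows essentially the same route as the paper's proof: splice the universal extension $\calt^{\delta+1}$ onto an initial segment of the chain ending at a stage from the cofinal reduced sequence, verify brilliance of the spliced chain, and apply reducedness there to pin down the intersection. You are in fact slightly more careful than the paper in noting that each $j_k$ must be a successor (so $\beta = j_k - 1$ is well defined and is the actual final index of the reduced chain) and in explicitly checking that the truncated-and-spliced chain remains brilliant.
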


\begin{proof}
    Let $\calt^j = \langle M_i^j : i \in I^j \rangle ^\wedge \langle a_i : i \in (I^j)^{-2} \rangle$ for all $j \leq \delta$. Suppose $\calt' = \langle M_i':i \in I^\delta \rangle ^\wedge \langle a_i : i \in (I^\delta)^{-2} \rangle$ is universal where $\calt^j \lesst \calt'$ for all $j \leq \delta$. Suppose $r < s \in I^\delta$. Take $k_0 < \mu$ such that $r, s \in I^{j_{k_0}}$. Then
    \begin{align*}
        M^\delta_s \cap M_r' &= \Big( \bigcup_{k \in [k_0, \mu)} M^{j_k}_s \Big) \cap M_r'&\\
        &= \bigcup_{k \in [k_0, \mu)} (M^{j_k}_s \cap M_r')&\\
        &= \bigcup_{k \in [k_0, \mu)} M^{j_k}_r \qquad \qquad &\text{(as $\langle \calt^{j} : j \leq j_k \rangle$ is reduced, $\calt'$ is universal,}\\
        &&  \text{ and $\langle \calt^{j} : j \leq j_k \rangle ^\wedge \calt'$ is brilliant)}\\
        &= M^\delta_r&
    \end{align*}
    So $\calt^\delta$ is reduced as desired.
\end{proof}

Over the following series of results, we will show that the final tower of a reduced chain of towers is always continuous at cofinalities $\geq \kappa$.

\begin{lemma}\label{extensions-with-non-forking-element-at-the-bottom}
    Suppose $\delta$ is a limit ordinal, $\langle\calt^j : j < \alpha+1 \rangle$ is a brilliant chain of towers with $\calt^\alpha = \langle M_i:i < \delta + 1 \rangle ^\wedge \langle a_i : i < \delta \rangle$, $M_* \lek^u M_0$ is a $(\lambda, \geq \kappa)$-limit model, and $b \in M_\delta$ such that $\gtp(b/M_i, M_\delta)$ $\dnf$-does not fork over $M_*$ for all $i < \delta$. Then there exists a universal tower $\calt' = \langle M_i:i < \delta + 1 \rangle ^\wedge \langle a_i : i < \delta \rangle$ such that $\langle\calt^j : j < \alpha+1 \rangle^\wedge \calt'$ is brilliant and $b \in M_0'$.
\end{lemma}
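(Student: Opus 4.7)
The plan is to reduce to Corollary \ref{any-brill-chain-exts-exist-with-b} and then identify the realization it produces with our given element $b$. I will set $p := \gtp(b/M_0, M_\delta)$, which $\dnf$-does not fork over $M_*$ by hypothesis (taking $i = 0$). Fixing any regular $\gamma \in [\kappa, \lambda^+)$, Corollary \ref{any-brill-chain-exts-exist-with-b} applied to the chain, the universal extension $M_* \lek^u M_0$, and the type $p$ yields a strongly $(\lambda, \gamma)$-limit tower $\calt'' = \langle M_i'' : i \leq \delta \rangle ^\wedge \langle a_i : i < \delta \rangle$ with $\langle \calt^j : j < \alpha + 1 \rangle ^\wedge \calt''$ brilliant, together with an element $b'' \in M_0''$ with $\gtp(b''/M_0, M_0'') = p$ and $\gtp(b''/M_i, M_\delta'')$ $\dnf$-does not fork over $M_*$ for every $i \leq \delta$.

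To swap $b''$ for $b$, I plan to use the following type equalities. For each $i < \delta$, the hypothesis together with monotonicity (extending the ambient via $M_\delta \lek M_\delta''$, which holds by $\calt^\alpha \lesst \calt''$) gives that $\gtp(b/M_i, M_\delta'')$ $\dnf$-does not fork over $M_*$. Since $\gtp(b''/M_i, M_\delta'')$ also $\dnf$-does not fork over $M_*$, both restrict to $p$ on $M_0$, and $M_* \lek^u M_0 \lek M_i$, weak uniqueness yields $\gtp(b/M_i, M_\delta'') = \gtp(b''/M_i, M_\delta'')$. Using these equalities and amalgamation via type equality, I will find an embedding $h : M_\delta'' \to L$ fixing $M_\delta$ and sending $b''$ to $b$, and set $M_i' := h[M_i'']$. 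The resulting tower $\calt' := \langle M_i' : i \leq \delta \rangle ^\wedge \langle a_i : i < \delta \rangle$ then satisfies $b = h(b'') \in h[M_0''] = M_0'$, and the $a_i$ are preserved since $h$ fixes $M_\delta \supseteq M_{i+2} \ni a_i$. Brilliance of $\langle \calt^j : j < \alpha + 1 \rangle ^\wedge \calt'$ transfers from $\langle \calt^j : j < \alpha + 1 \rangle ^\wedge \calt''$ through $h$ by invariance: $h$ fixes each $M_i^j \lek M_\delta$, so the conditions $M_i^j \lek^u M_i'$ follow from $M_i^j \lek^u M_i''$, and the $\dnf$-non-forking of $a_i$ over $M_i^j$ is preserved.

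The main obstacle I anticipate is constructing the embedding $h$ fixing all of $M_\delta$, which requires the global type equality $\gtp(b/M_\delta, M_\delta'') = \gtp(b''/M_\delta, M_\delta'')$, beyond the level-by-level agreement over each $M_i$ for $i < \delta$. When $\cof(\delta) \geq \kappa$, this will follow from the level-by-level equalities via $(\geq \kappa)$-universal continuity (Lemma \ref{uniform-cty-star-gives-high-univ-cty}). In the general case, I expect to leverage that $M_\delta \in \Kkappalims$ by working with an explicit witnessing sequence of $M_\delta$ as a $(\lambda, \mu)$-limit with $\cof(\mu) \geq \kappa$, transferring the non-forking conditions to that sequence and running an analogous continuity argument to obtain the global equality, and thereby the desired $h$.
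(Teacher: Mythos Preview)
Your plan has a genuine gap at exactly the point you flagged as the main obstacle, and neither of your proposed fixes resolves it. You want an embedding $h$ fixing all of $M_\delta$ with $h(b'') = b$, which forces $\gtp(b/M_\delta, M_\delta'') = \gtp(b''/M_\delta, M_\delta'')$. But $b \in M_\delta$, so the left-hand type is algebraic; the element $b''$ produced by Corollary~\ref{any-brill-chain-exts-exist-with-b} lies in $M_0''$ and there is no reason at all for it to lie in $M_\delta$, so the right-hand type is typically non-algebraic. Hence the global type equality over $M_\delta$ simply fails in general, and no such $h$ exists. Your appeal to $(\geq\kappa)$-universal continuity (or to a witnessing sequence for $M_\delta$ as a high-cofinality limit) does not help: continuity only yields agreement over $\bigcup_{i<\delta} M_i$, and the hypothesis gives you nothing about $\gtp(b/N, -)$ for submodels $N \lek M_\delta$ other than the $M_i$ with $i<\delta$. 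In particular, even when $\cof(\delta)\geq\kappa$, the tower $\calt^\alpha$ need not be continuous at $\delta$ (indeed, in the intended application it is not), so $\bigcup_{i<\delta} M_i \lneq M_\delta$ and the gap between them is precisely where your argument breaks.

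The paper's proof sidesteps this by applying Corollary~\ref{any-brill-chain-exts-exist-with-b} to the chain \emph{restricted to $\delta$} (dropping level $\delta$), obtaining $\calt^* = \langle M_i^* : i<\delta\rangle$ and $b^*\in M_0^*$. Weak uniqueness plus universal continuity$^*$ then give $\gtp(b/\bigcup_{i<\delta} M_i, M_\delta) = \gtp(b^*/\bigcup_{i<\delta} M_i, \bigcup_{i<\delta} M_i^*)$, i.e.\ agreement only over $\bigcup_{i<\delta} M_i$. This is enough: one takes $M_\delta^*$ a $(\lambda,\kappa)$-limit over $\bigcup_{i<\delta} M_i^*$, and by type equality embeds $M_\delta$ into $M_\delta^*$ fixing only $\bigcup_{i<\delta} M_i$ and sending $b\mapsto b^*$. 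Pulling the whole picture back along (an extension of) this embedding yields $\calt'$ indexed by $\delta+1$ with $b\in M_0'$; the top level $M_\delta'$ is produced by the pullback rather than being fixed, and one checks $M_\delta \lek^u M_\delta'$ directly. The key idea you are missing is that the embedding should move $M_\delta$ rather than fix it.
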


\begin{proof}
    By Corollary \ref{any-brill-chain-exts-exist-with-b} applied to $\langle\calt^j : j < \alpha+1 \rangle \upharpoonright \delta$, $b$, and $M^*$, there exists $\calt^* = \langle M_i^*:i < \delta \rangle ^\wedge \langle a_i : i < \delta \rangle$ and $b^* \in M_0^*$ such that $\gtp(b^*/M_0, M_0^*) = \gtp(b/M_0, M_\delta)$ and $\gtp(b^*/M_i, \bigcup_{i<\delta}M_i^*)$ $\dnf$-does not fork over $M_*$ for all $i < \delta$. For $i < \delta$, since both $\gtp(b/M_i, M_\delta)$ and $\gtp(b^*/M_i, \bigcup_{i<\delta}M_i^*)$ are extensions of $\gtp(b/M_0, M_0')$, and both $\dnf$-do not fork over $M_* \lek^u M_0 \lek \bigcup_{i < \delta} M_i$, by weak uniqueness, $\gtp(b/M_i, M_\delta) = \gtp(b^*/M_i, \bigcup_{i<\delta}M_i^*)$. Finally we get that $\gtp(b/\bigcup_{i<\delta}M_i, M_\delta) = \gtp(b^*/\bigcup_{i < \delta}M_i, \bigcup_{i<\delta}M_i^*)$ by $\Kkappalims$-universal continuity* in $\K$.

    Take $M_\delta^*$ a $(\lambda, \kappa)$-limit model over $\bigcup_{i<\delta}M^*_i$. By type equality, there exists $f:M_\delta \rightarrow M_\delta^*$ fixing $\bigcup_{i < \delta} M_i$ such that $f(b) = b^*$. Then take an isomorphism $g:M_\delta' \rightarrow M_\delta^*$ extending $f$, and let $M_i' = f^{-1}[M_i^*]$ for each $i < \delta$. Set $\calt' = \langle M_i:i < \delta + 1 \rangle ^\wedge \langle a_i : i < \delta \rangle$. We have $\calt^j \upharpoonright \delta \lesst \calt'$ by construction, and since we know $M_\delta \lek^u M_\delta'$, $\calt^j \lesst \calt'$ for all $j\leq \alpha$ (only conditions (1) and (2) of Definition \ref{tower_ordering_def} are relevant for the limit level $\delta$). As $\calt'$ is a universal tower, $\langle\calt^j : j < \alpha+1 \rangle^\wedge \calt'$ is brilliant as desired.
\end{proof}

The following says that the $\lesst$-chain of initial segments of towers from a reduced chain is reduced. Recall the notation $\calc \upharpoonright_* I_0$ from Definition \ref{restrict-chains-def}, the chain of all restrictions of towers in $\calc$ to their index's intersection with $I_0$.

\begin{lemma}\label{initial-segments-of-reduced-towers-are-reduced}
    If $\langle \calt^j : j < \alpha + 1 \rangle$  is reduced and brilliant where $\calt^\alpha = \langle M_i : i \in I \rangle^\wedge \langle a_i : i < I^{-2} \rangle$, and $I_0$ is an initial segment of $I$, then $\langle \calt^j : j < \alpha + 1 \rangle \upharpoonright_* I_0$ is reduced.
\end{lemma}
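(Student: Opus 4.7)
The plan is to deduce reducedness of $\bar{\calc} := \langle \calt^j : j < \alpha + 1 \rangle \upharpoonright_* I_0$ directly from reducedness of $\calc$, by lifting every potential witness back up to the full index set $I$. Given any universal tower $\calt' = \langle M_i' : i \in I_0 \rangle ^\wedge \langle a_i : i \in I_0^{-2} \rangle$ such that $\bar{\calc}^\wedge \calt'$ is brilliant, we will produce a universal tower $\tilde{\calt} = \langle \tilde{M}_i : i \in I \rangle^\wedge \langle a_i : i \in I^{-2} \rangle$ with $\calc^\wedge \tilde{\calt}$ brilliant, together with an isomorphism $g \colon \bigcup_{i \in I_0} \tilde{M}_i \to \bigcup_{i \in I_0} M_i'$ fixing $\bigcup_{i \in I_0} M_i$ and satisfying $g[\tilde{M}_i] = M_i'$ for each $i \in I_0$. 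When $\calt^\alpha$ is itself universal -- automatic unless $\cof(\alpha) \geq \kappa$ -- this lift is exactly the output of Proposition \ref{brilliant-extension-from-initial-segment-exist} applied to $\calc$ with $\calt^{\ast} := \calt'$.

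The hard part will be the complementary case, where $\calt^\alpha$ fails to be universal; by the definition of brilliant, this forces $\cof(\alpha) \geq \kappa$ and $\calt^\alpha = \bigcup_{k < \alpha} \calt^k$, so Proposition \ref{brilliant-extension-from-initial-segment-exist} as stated does not apply. The construction technique behind it still goes through, however, provided we replace the appeal to Proposition \ref{univ-tower-exts-exist} at the final stage with an analogous ``initial-segment-preserving'' version of the moreover clause of Proposition \ref{limit-tower-exts-exist}. Concretely, we fix a cofinal subsequence of universal towers in $\langle \calt^k : k < \alpha \rangle$ (of length $\cof(\alpha) \geq \kappa$) and rerun the recursive construction from the proof of Proposition \ref{limit-tower-exts-exist}, but initializing $N_i$ to $M_i'$ and $f_i$ to the inclusion map for $i \in I_0$, exactly as in the proof of Proposition \ref{brilliant-extension-from-initial-segment-exist}. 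This initial-segment-preserving moreover clause is the one genuinely new ingredient, and it is the step that will require the most care; it reflects the standing complication that $\lesst$ is only known to be transitive on universal towers, which is precisely what obstructs a more uniform appeal to Proposition \ref{brilliant-extension-from-initial-segment-exist}.

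Once $\tilde{\calt}$ and $g$ are in hand, the conclusion is a short diagram chase. Reducedness of $\calc$ applied to the universal extension $\tilde{\calt}$ yields $M_s^\alpha \cap \tilde{M}_r = M_r^\alpha$ for all $r < s \in I$, and in particular for all $r < s \in I_0$. Since $M_r^\alpha, M_s^\alpha \subseteq \bigcup_{i \in I_0} M_i$ are fixed pointwise by $g$ while $g[\tilde{M}_r] = M_r'$, applying $g$ to this equality gives $M_s^\alpha \cap M_r' = M_r^\alpha$, which is exactly the reducedness condition for $\bar{\calc}$.
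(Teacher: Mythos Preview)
Your approach is essentially the same as the paper's: lift the given universal extension $\calt'$ on $I_0$ to a universal extension $\tilde{\calt}$ on all of $I$ via Proposition~\ref{brilliant-extension-from-initial-segment-exist}, apply reducedness of the full chain $\calc$ to $\tilde{\calt}$, and then push the resulting equalities through the isomorphism $g$ (which fixes the $M_i$'s and carries $\tilde{M}_r$ to $M_r'$) to obtain the reducedness condition for $\bar{\calc}$.

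You are in fact more careful than the paper on one point. The paper simply invokes Proposition~\ref{brilliant-extension-from-initial-segment-exist}, but that proposition as \emph{stated} assumes $\calt^\alpha$ is universal, which need not hold for an arbitrary brilliant chain of successor length: $\calt^\alpha$ could be a union at a stage of cofinality $\geq \kappa$. You correctly flag this and explain how to handle it by rerunning the initial-segment-preserving construction with the ``moreover'' clause of Proposition~\ref{limit-tower-exts-exist} in place of Proposition~\ref{univ-tower-exts-exist}. This is exactly right, and indeed the paper's own proof of Proposition~\ref{brilliant-extension-from-initial-segment-exist} already describes modifying \emph{both} Propositions~\ref{univ-tower-exts-exist} and~\ref{limit-tower-exts-exist} and then following the full case analysis of Corollary~\ref{universal-extensions-of-brilliant-chains-exist}---so the universality hypothesis on $\calt^\alpha$ in the proposition's statement appears to be vestigial rather than essential. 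Your treatment makes this explicit and closes the minor gap.
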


\begin{proof}
    Suppose $\Big(\langle \calt^j : j < \alpha + 1 \rangle \upharpoonright_* I_0 \Big)^\wedge \calt^*$ is brilliant where $\calt^* = \langle M_i^* : i \in I_0 \rangle^\wedge \langle a_i : i < (I_0)^{-2} \rangle$ is universal. By Proposition \ref{brilliant-extension-from-initial-segment-exist}, there exists $\calt'= \langle M_i':i \in I \rangle ^\wedge \langle a_i : i \in I^{-2} \rangle$ a universal tower such that $\langle \calt^j : j < \alpha + 1 \rangle ^\wedge \calt'$ is brilliant and a $\K$-embedding $g:\bigcup_{i < \beta} M_i' \underset{\bigcup_{i < \beta}M_i}{\rightarrow} \bigcup_{i < \beta} M_i^*$ such that $g[M_i'] = M_i^*$. As $\langle \calt^j : j < \alpha + 1 \rangle$ is reduced, for all $r < s < \alpha$, $M_s \cap M_r' = M_r$. In particular, this holds for $r < s < \beta$, which implies $g[M_s \cap M_r'] = g[M_r]$, which simplifies to $M_s \cap M_r^* = M_r$ as $g$ fixes $M_r, M_s$ and $g[M_r'] = M^*_r$. So $\langle \calt^j : j \leq \alpha + 1 \rangle \upharpoonright_* I_0$ is reduced, as desired.
\end{proof}

The following says that the $\lesst$-chain of end segments of a reduced chain is almost reduced - that is, they satisfy the reduced condition for all models in the final tower except possibly the bottom one. 

\begin{lemma}\label{reduced-end-segments-are-almost-reduced}
    Suppose $\langle \calt^j : j < \alpha + 1 \rangle$ is brilliant and reduced where $\calt^\alpha = \langle M_i : i \in I \rangle^\wedge \langle a_i : i \in I^{-2} \rangle$, and that $i_0 \in I$. Let $I_0 = \{i \in I : i_0 \leq_I i\}$ be the end segment from $i_0$. Suppose that $\calt' = \langle M_i' : i \in I_0 \rangle^\wedge \langle a_i : i \in (I_0)^{-2} \rangle$ is universal such that $(\langle \calt^j : j < \alpha + 1 \rangle \upharpoonright_* I_0) ^\wedge \calt'$ is brilliant. Then for any $r, s \in I_0 \setminus \{i_0\}$ where $r <_I s$, $M_r' \cap M_s = M_r$.
\end{lemma}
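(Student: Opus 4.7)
The plan is as follows. Let $J_0 = \{i \in I : i <_I i_0\}$, the initial segment of $I$ strictly below $i_0$. If $J_0 = \emptyset$ then $I_0 = I$, the hypothesis gives that $\langle \calt^j : j < \alpha + 1\rangle ^\wedge \calt'$ is brilliant, and applying the reducedness of $\langle \calt^j : j < \alpha + 1 \rangle$ directly to $\calt'$ yields the claim (in fact in the stronger form that it holds for all $r <_I s$ in $I_0$, including $r = i_0$). So assume henceforth that $J_0 \neq \emptyset$.

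My plan is to promote $\calt'$ to a tower indexed by the full $I$. Concretely, I would construct an auxiliary universal tower $\calt'' = \langle M_i'' : i \in I \rangle ^\wedge \langle a_i : i \in I^{-2}\rangle$ with $\langle \calt^j : j < \alpha + 1 \rangle ^\wedge \calt''$ brilliant, together with a $\K$-embedding $g : \bigcup_{i \in I_0} M_i' \to \bigcup_{i \in I_0} M_i''$ that fixes $\bigcup_{i \in I_0} M_i$ and satisfies $g[M_r'] \subseteq M_r''$ for every $r \in I_0$. Given such a pair, the reducedness of $\langle \calt^j : j < \alpha + 1 \rangle$ applied to $\calt''$ gives $M_r'' \cap M_s = M_r$ for all $r <_I s$ in $I$; for $r, s \in I_0 \setminus \{i_0\}$ with $r <_I s$, the injectivity of $g$ and the fact that $g$ fixes both $M_r$ and $M_s$ (as subsets of the common base $\bigcup_{i \in I_0} M_i$) yield
\[
g\bigl[M_r' \cap M_s\bigr] = g[M_r'] \cap g[M_s] = g[M_r'] \cap M_s \subseteq M_r'' \cap M_s = M_r.
\]
Combined with $M_r = g[M_r] \subseteq g[M_r' \cap M_s]$ (using $M_r \subseteq M_r' \cap M_s$ and that $g$ fixes $M_r$), this forces $g[M_r' \cap M_s] = M_r$, and by injectivity of $g$ we conclude $M_r' \cap M_s = M_r$.

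To construct $(\calt'', g)$, first apply Proposition \ref{univ-tower-exts-exist} (or Corollary \ref{universal-extensions-of-brilliant-chains-exist} if $\calt^\alpha$ is itself a union rather than universal) to produce a strongly $(\lambda, \kappa)$-limit tower $\calt''$ which $\lesst$-extends $\calt^\alpha$ and keeps the chain brilliant. Then build $g = \bigcup_{i \in I_0} g_i$ by recursion on $i \in I_0$, where each $g_i : M_i' \to M_i''$ is a $\K$-embedding fixing $M_i$, with $g_j = g_i \upharpoonright M_j'$ and $g_i[M_j'] \subseteq M_j''$ for $j <_I i$ in $I_0$. The base $g_{i_0}$ is provided by the universality $M_{i_0} \lek^u M_{i_0}''$ (from $\calt^\alpha \lesst \calt''$), and at successor and limit stages one amalgamates $M_i'$ with the partial construction into $M_i''$, exploiting that $M_i''$ is a $(\lambda, \kappa)$-limit model over $\bigcup_{r <_I i} M_r''$ (by $\calt''$ being strongly $(\lambda, \kappa)$-limit), with weak uniqueness used to verify the coherence $g_j \subseteq g_i$ and that the extension can be chosen to fix $M_i$.

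The hard part will be the recursion step immediately above $i_0$: extending $g_{i_0}$ to $g_{i_0^+}$ (where $i_0^+$ is the successor of $i_0$ in $I_0$, if any) while fixing $M_{i_0^+}$ requires $g_{i_0}$ to fix the intersection $M_{i_0}' \cap M_{i_0^+}$, and this intersection may strictly contain $M_{i_0}$ --- this is exactly why the statement excludes $r = i_0$ from the conclusion. The construction must be designed so that this potential mismatch at $i_0$ is absorbed by sufficient flexibility in the choices of $g_{i_0}$ and $M_{i_0^+}''$, while the recursive invariants (in particular $g_i[M_j'] \subseteq M_j''$ for all $j <_I i$) prevent an analogous obstruction at later stages. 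Once this delicate gluing is handled, the intersection calculation above closes the argument.
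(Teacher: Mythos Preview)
Your high-level strategy --- manufacture a universal tower $\calt''$ on the full index $I$ so that the original chain's reducedness applies, then read off the conclusion --- is exactly the paper's approach. The gap is in how you pass between $\calt'$ and $\calt''$.

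You try to embed $\calt'$ \emph{into} an independently-built $\calt''$ via a map $g$ that fixes each $M_i$. The recursion for $g$ has a coherence problem at every successor step, not just above $i_0$: to extend $g_i : M_i' \to M_i''$ (fixing $M_i$) to $g_{i+1}$ fixing $M_{i+1}$, you need $g_i$ to already be the identity on $M_i' \cap M_{i+1}$. But $M_i' \cap M_{i+1} = M_i$ is precisely the statement you are trying to prove, so the construction is circular. The invariant $g_i[M_j'] \subseteq M_j''$ does not help: it controls the image, not whether $g_i$ acts as the identity on the overlap with the next $M_{i+1}$. Your sketch also wavers on whether $\calt''$ is fixed in advance (via Corollary~\ref{universal-extensions-of-brilliant-chains-exist}) or built alongside $g$; neither reading resolves the coherence issue. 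Invoking weak uniqueness is a red herring here --- it controls types, not pointwise agreement of embeddings on unknown intersections.

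The paper reverses the direction of the gluing. Rather than pushing $\calt'$ into something larger, it grows $\calt'$ \emph{downward}: take a universal extension $\calt^*$ of the chain restricted to the short initial segment $\{i : i \leq i_0 +_I 1\}$, then use $M_{i_0+1} \lek^u M_{i_0+1}'$ to embed $\bigcup M^*$ into $M_{i_0+1}'$ over $M_{i_0+1}$. The resulting $\calt''$ uses the image of $\calt^*$ below $i_0+1$ and the literal models $M_i'$ for $i \geq i_0+1$. Because the $M_i'$ are used unchanged above $i_0+1$, there is no embedding to build there and no coherence obstruction; the only gluing happens once, at level $i_0+1$, where it is guaranteed by universality. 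This is why the conclusion excludes $r = i_0$ but needs no inductive hypothesis at later levels.
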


\begin{proof} 
    By relabelling the towers if necessary, assume $I = \beta$ and $i_0 = \xi$ for notational convenience (so $I_0 = [\xi, \beta)$). If $\xi + 1 < \beta$, the problem becomes trivial, so assume $\gamma + 1 < \beta$.

    Take some universal $\calt^* = \langle M_i^* : i < \beta + 2 \rangle^\wedge \langle a_i : i < \beta \rangle$ such that $\langle \calt^j : j < \alpha + 1 \rangle \upharpoonright_* (\xi + 2) \lesst \calt^*$ by Corollary \ref{universal-extensions-of-brilliant-chains-exist}.

    As $\calt^\alpha \upharpoonright [\xi, \beta) \lesst \calt^*$, $M_{\xi + 1} \lek^u M_{\xi+1}'$. As $M_{\xi + 1} \lek M_{\xi + 1}^*$, there exists a $\K$-embedding $f : M_{\xi + 1}^* \underset{M_{\xi + 1}}{\rightarrow} M_{\xi + 1}'$. For $i \leq \xi$, let $M_i'' = f[M_i^*]$, and for $i \in [\xi + 1, \beta)$, $M_i'' = M_i'$. Let $\calt'' = \langle M_i'' : i \in \beta \rangle^\wedge \langle a_i : i < \beta^{-2} \rangle$. 
    
    Say $\calt^j = \langle M_i^j : i \in I^j \rangle^\wedge \langle a_i : i \in (I^j)^{-2}\rangle$ for $j \leq \alpha$ (so $I^j \subseteq \beta$ for all $j < \alpha$). Note that $\gtp(a_i/M_{i+1}'', M_{i+2}'')$ $\dnf$-does not fork over $M_i^j$ for all $j \leq \alpha$ and $i \in (I^j)^{-2}$: firstly, if $i+1 \leq \xi$, we have $\gtp(a_i, M_{i+1}'', M_{i+2}'') = \gtp(f(a_i)/f[M_{i+1}^*], f[M_{i+2}^*]) = f(\gtp(a_i/M_{i+1}, M_{i+2}^*))$ $\dnf$-does not fork over $M_i^j = f[M_i^j]$ from $\calt^j\upharpoonright ((\xi + 2)\cap I^j) \lesst \calt^*$ and by invariance (when $i+1 = \xi$, the first equality follows from monotonicity as $f[M_{i+2}^*] = f[M_{\xi + 1}^*] \lek M_{\xi + 1}'' = M_{i+2}''$, and if $i+1 < \xi$, it follows from $f[M_{i+2}^*] = M_{i+2}''$ by definition). Secondly, if $i \geq \xi +1$, we have $\gtp(a_i/M_{i+1}'', M_{i+2}'') = \gtp(a_i/M_{i+1}', M_{i+2}')$ $\dnf$-does not fork over $M_i^j$ from $\calt \upharpoonright [\xi, \beta) \lesst\calt'$.
    
    In particular, note $\gtp(a_i/M_{i+1}'', M_{i+2}'')$ $\dnf$-does not fork over $M_i''$ for all $i < \alpha$ by monotonicity, so $\calt''$ is a tower. As $\calt^*$ and $\calt'$ are universal, $\calt''$ is also. And as $\gtp(a_i/M_{i+1}'', M_{i+2}'')$ $\dnf$-does not fork over $M_i^j$ for all $j < \alpha$ and $i \in (I^j)^{-2}$, $\calt^j \lesst \calt''$. Since $\langle \calt^j : j < \alpha + 1 \rangle$ is reduced, $\langle \calt^j : j < \alpha + 1 \rangle ^\wedge \calt''$ is brilliant, and $\calt''$ is universal, for all $r < s < \alpha$ we have that $M_r'' \cap M_s = M_r$. In particular, for $r < s$ both in $[\beta +1, \alpha)$, since $M_r'' = M_r'$, we have $M_r' \cap M_s = M_r$ as desired.
\end{proof}

\begin{proposition}\label{reduced-towers-are continuous-at-high-cofinality}
    If $\langle \calt^j : j < \alpha + 1 \rangle$ is brilliant and reduced where $\calt^\alpha = \langle M_i : i \in I\rangle^\wedge \langle M_i : i \in I^{-2} \rangle$ and $\delta \in I$ with $\cof_I(\delta) \geq \kappa$, then $\calt^\alpha$ is continuous at $\delta$.
\end{proposition}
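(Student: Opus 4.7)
The plan is to argue by contradiction: let $M_\delta^- := \bigcup_{i <_I \delta} M_i$ (which is $\lek M_\delta$ by the chain and coherence axioms) and suppose $b \in M_\delta \setminus M_\delta^-$. I will build a universal tower $\calt^{\alpha+1}$ indexed by $I$ with $\langle \calt^j : j < \alpha + 2 \rangle$ brilliant and $b \in M_{r^*}^{\alpha+1}$ for some $r^* <_I \delta$; the reduced hypothesis applied at $(r^*, \delta)$ then forces $M_\delta \cap M_{r^*}^{\alpha+1} = M_{r^*}$, so $b \in M_{r^*} \subseteq M_\delta^-$, a contradiction.

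First, I would use $\cof_I(\delta) \geq \kappa$ together with the brilliance of the chain to produce a $\lek^u$-increasing cofinal sequence $\langle M_{\delta_k'} : k < \mu \rangle$ inside $M_\delta^-$ of regular length $\mu \geq \kappa$, obtained by taking a cofinal sequence $\langle \delta_k : k < \mu\rangle$ of $\delta$ in $I$, replacing each $\delta_k$ by its $I$-successor, and verifying the $\lek^u$-increments by $\lambda$-amalgamation through the universal edges inherited from the brilliant chain. Applying $(\geq \kappa)$-local character to $\gtp(b/M_\delta^-, M_\delta)$ yields some $k_0 < \mu$ such that this type $\dnf$-does not fork over $M_* := M_{\delta_{k_0}'}$; setting $r^* := \delta_{k_0+1}$ gives $M_* \lek^u M_{r^*}$, and monotonicity spreads the non-forking to $\gtp(b/M_i, M_\delta)$ over $M_*$ for every $i$ in the interval $I_1 := \{i \in I : r^* \leq_I i \leq_I \delta\}$.

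Next I would restrict the chain to $I_1$; this restriction is brilliant (interval restrictions preserve brilliance), and after relabeling $I_1$ as an ordinal interval $[0, \delta'+1]$ with $\delta'$ limit, Lemma \ref{extensions-with-non-forking-element-at-the-bottom} applied with this $M_*$ and $b$ yields a universal tower $\calt^\circ$ on $I_1$ extending $\calt^\alpha \upharpoonright I_1$ brilliantly, whose minimum model contains $b$. To produce $\calt^{\alpha+1}$ indexed by all of $I$, I would glue $\calt^\circ$ with a strongly $(\lambda, \kappa)$-limit extension of $\calt^\alpha \upharpoonright (I \setminus I_1)$ (obtained via Corollary \ref{universal-extensions-of-brilliant-chains-exist}) across the boundary at $r^*$, using weak extension and weak uniqueness to align the non-forking of the $a_i$'s spanning the seam and $\Kkappalims$-universal continuity* in $\K$ at the limit stages. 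The result is a universal tower $\calt^{\alpha+1}$ extending $\calt^\alpha$, keeping $\langle \calt^j : j < \alpha+2\rangle$ brilliant, and with $b$ still placed at level $r^*$.

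The hard part will be this final gluing step. The paper's extension machinery produces completions from prescribed \emph{initial-segment} data (Proposition \ref{brilliant-extension-from-initial-segment-exist}), whereas $\calt^\circ$ sits on an \emph{end-segment} of $I$; and because $\lesst$ is only transitive through a universal middle tower (Lemma \ref{transitivity-on-universal}) every intermediate tower used in the gluing must itself be universal so that brilliance is preserved across the seam. Carefully arranging this while maintaining the $\dnf$-non-forking of each $a_i$ between the ``old'' models below $r^*$ and the ``new'' models from $\calt^\circ$ above $r^*$, without disturbing $b$'s placement at level $r^*$, is the delicate point. Once accomplished, the reduced hypothesis applied at $(r, s) = (r^*, \delta)$ delivers the required contradiction, completing the proof.
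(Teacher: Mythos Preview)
Your overall strategy --- contradiction via $b \in M_\delta \setminus M_\delta^-$, local character to locate a small base $M_*$, Lemma \ref{extensions-with-non-forking-element-at-the-bottom} to pull $b$ down to a low level of an extending tower, then the reduced hypothesis for the contradiction --- matches the paper's. But the gluing step you flag as ``the hard part'' is a genuine gap, and the paper does not attempt it. You correctly observe that Proposition \ref{brilliant-extension-from-initial-segment-exist} only completes from \emph{initial}-segment data; there is no tool here to extend an end-segment tower \emph{downward} while keeping the whole chain brilliant. The non-forking conditions for those $a_i$ whose indices $i, i+1, i+2$ straddle the seam at $r^*$ must be checked against \emph{every} earlier $\calt^j$, and with only weak transitivity through universal middle towers (Lemma \ref{transitivity-on-universal}) this does not go through.

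The paper sidesteps the gluing entirely. First it uses Lemma \ref{initial-segments-of-reduced-towers-are-reduced} to cut $I$ down to the initial segment $\delta+1$, so nothing lives above $\delta$. Then, rather than trying to glue the end-segment tower back onto the lower part of $I$, it invokes Lemma \ref{reduced-end-segments-are-almost-reduced}: if the original chain is reduced, then any universal tower $\calt''$ extending the chain restricted to the end segment $[\gamma, \delta]$ already satisfies $M_r'' \cap M_s = M_r$ for all $r < s$ in $(\gamma, \delta]$ --- the reduced conclusion holds on the end segment except possibly at the bottom level $\gamma$. Taking $r = \gamma+1$, $s = \delta$ then gives the contradiction directly, with no need to rebuild a tower on all of $I$.

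One smaller point: you apply local character to a cofinal sequence drawn from $\calt^\alpha$ itself, but $\calt^\alpha$ need not be universal (it may be a union tower in the brilliant chain), so producing a genuinely $\lek^u$-increasing cofinal sequence of length $\geq \kappa$ from its models is not immediate; your phrase ``universal edges inherited from the brilliant chain'' does not pin this down. The paper avoids this as well: it first takes a universal extension $\calt'$ of the whole chain via Corollary \ref{universal-extensions-of-brilliant-chains-exist}, and applies local character to $\gtp(b/\bigcup_{i<\delta} M_i', M_\delta')$, where the $\lek^u$-increments are guaranteed by universality of $\calt'$.
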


\begin{proof}
    Suppose not for contradiction. By relabelling, it is enough to prove this in the case that $I = \beta$ is an ordinal. Let $\delta$ be the minimal ordinal with $\cof(\delta) \geq \kappa$ such that there exists $\beta > \delta$ and a tower $\calt = \langle M_i : i \in \beta \rangle^\wedge \langle M_i : i < \beta^{-2} \rangle$ discontinuous at $\delta \in \beta$. Fix the minimal such $\beta$ and a corresponding $\calt$ as above.

    Since $\calt$ is discontinuous at $\delta$, there is $b \in M_\delta \setminus \bigcup_{i < \delta} M_i$. Note that $\langle \calt^j : j \leq \alpha + 1 \rangle \upharpoonright_* (\delta + 1)$ has final tower discontinuous at $\delta$ also, and is reduced by Lemma \ref{initial-segments-of-reduced-towers-are-reduced}. Since $\beta$ is minimal, $\beta = \delta + 1$.

    Since $\langle \calt^j : j < \alpha + 1 \rangle$ is brilliant, by Corollary \ref{universal-extensions-of-brilliant-chains-exist} there is a universal tower $\calt' = \langle M_i' : i < \beta \rangle^\wedge \langle a_i : i < \beta^{-2} \rangle$ indexed by $\delta + 1$ where $\langle \calt^j : j < \alpha + 1\rangle ^\wedge \calt'$ is brilliant. Since $\cof(\delta) \geq \kappa$, by $(\geq \kappa)$-local character, there is $\gamma < \delta$ such that $\gtp(b/\bigcup_{i < \delta} M_i', M_\delta')$ $\dnf$-does not fork over $M_\gamma'$. Replacing $\gamma$ with $\gamma+1$ if needed, we may assume there exists a $(\lambda, \geq \kappa)$-limit model $M_*$ where $M_* \lek^u M_\gamma'$ and $\gtp(b/\bigcup_{i < \delta} M_i', M_\delta')$ $\dnf$-does not fork over $M_*$. By Lemma \ref{extensions-with-non-forking-element-at-the-bottom} applied to $\left((\langle \calt^j : j < \alpha + 1 \rangle^\wedge \calt') \upharpoonright^* [\gamma, \delta]\right)$, $b$, and $M_*$, there is a universal tower $\calt'' = \langle M_i'' : i \in [\gamma, \delta] \rangle^\wedge \langle a_i : i \in [\gamma, \delta) \rangle$ such that $\left((\langle \calt^j : j < \alpha + 1 \rangle^\wedge \calt') \upharpoonright^* [\gamma, \delta]\right)  ^\wedge \calt''$ is brilliant and $b \in M_\gamma''$.

    So $\left(\langle \calt^j : j < \alpha + 1 \rangle \upharpoonright^* [\gamma, \delta]\right) ^\wedge \calt''$ is brilliant also. Because $\langle \calt^j : j < \alpha + 1 \rangle$ is reduced, and $\calt''$ is universal, for all $r < s$ in $[\gamma+1, \alpha)$ we have $M_r'' \cap M_s = M_r$ by Lemma \ref{reduced-end-segments-are-almost-reduced}. Taking $r = \gamma + 1$ and $s = \delta$, we have $M_{\gamma + 1}'' \cap M_\delta = M_{\gamma + 1}$. But $b \in (M_{\gamma + 1}'' \cap M_\delta) \setminus M_{\gamma + 1}$, a contradiction.
\end{proof}

\subsection{The main theorem}

We restate and prove the main theorem. The strategy is to interweave sequences of full and reduced towers using the machinery we have built up. The union will then be reduced and full, and as discussed previously, by indexing our tower correctly, the top model of the final tower will be both a $(\lambda, 
\delta_1)$ limit model and a $(\lambda, \delta_2)$ limit model over the bottom model of the first tower.

\maintheorem*

\begin{proof}
    Firstly, by Remark \ref{assume-K-prime-is-Kkappalims}, we may assume $\K' = \Kkappalims$. By Fact \ref{cofinality-isomorphisms}, it is enough to show that there is a model which is both $(\lambda, \delta_1)$-limit over $M$ and $(\lambda, \delta_2)$-limit over $M$. By taking an appropriate extension, without loss of generality $M$ is a $(\lambda, \kappa)$-limit model. 

    Define inductively $<$-increasing continuous sequences of limit ordinals $\langle \alpha_k : k \leq \delta_1\rangle$ in $\lambda^+$, and  $\langle \beta_k : k \leq \delta_1\rangle$ in $\lambda^+$, and a brilliant chain towers $\langle \calt^j : j \leq \beta_{\delta_1}\rangle$, such that:
    \begin{enumerate}
        \item $\beta_0 = 0$, $\beta_{k+1}$ is successor for all $k < \delta_1$
        \item For all $k <\delta_1$ and all $j \in [\beta_k, \beta_{k+1})$, $\calt^j = \langle M^j_{(a, b, c)} : (a, b, c) \in (\delta_2 + 1) \times \lambda \times \alpha_k \rangle ^\wedge \langle a_{(a, b, c)} : (a, b, c) \in (\delta_2 + 1) \times \lambda \times \alpha_k \rangle$
        \item $M_{(0, 0, 0)} = M$
        \item For all $k < \delta_1$, $\langle \calt^j : j < \beta_{k+1}\rangle$ is reduced
        \item For all $k < \delta_1$, $\calt^{\beta_{k+1}}$ is universal and $(\delta_2 + 1) \times \lambda \times \{0\}$-full
        \item $\calt^{\beta_{\delta_1}} = \bigcup_{k < \delta_1} \calt^{\beta_k}$.
    \end{enumerate}
    \textbf{This is possible:} We construct by recursion on $k \leq \delta_1$ $\alpha_k, \beta_k$, and $\langle \calt^j : j < \beta_k \rangle$ with the structure as given in clause (2) of the construction. 
    
    Let $\alpha_0<\lambda^+$ be any limit, $\beta_0 = 0$. Let $\calt^0$ any $(\lambda, \kappa)$-tower indexed by $(\delta_2 + 1) \times \lambda \times \alpha_1$ as in clause (2) of the construction where $M_{(0, 0, 0)} = M$ (this exists by Lemma \ref{towers-exist}). 

    For the successor case, suppose we have $\alpha_k, \beta_k, \langle \calt^j : j \leq \beta_k \rangle$ for $k <\delta_1$. By Lemma \ref{reduced-extensions-exist}, there is some $\beta_{k+1} > 0$ and towers $\calt^j$ indexed by $(\delta_2 + 1) \times \lambda \times \alpha_k$ for $j \in (\beta_k, \beta_{k+1})$ such that $\langle \calt^j : j < \beta_{k+1}\rangle$ is reduced. Take $\alpha_{k+1} = \alpha_k + \lambda$. By Lemma \ref{full-extensions-exist}, there is $\calt^{\beta_{k+1}}$ indexed by $(\delta_2 + 1) \times \lambda \times \alpha_{k+1}$ such that $\langle \calt^j : j \leq \beta_{k+1}\rangle$ is brilliant and $\calt^{\beta_{k+1}}$ is $(\delta_2 + 1) \times \lambda \times \{0\}$-full.
    
    At limits $k < \delta_1$, take $\alpha_k = \bigcup_{j < k} \alpha_j$, $\beta_k = \bigcup_{j < k} \beta_j$ and $\calt^{\beta_k}$ such that $\langle \calt^j : j \leq \beta_k \rangle$ is brilliant by Corollary \ref{universal-extensions-of-brilliant-chains-exist}. 
    
    For $k = \delta_1$, take $\alpha_{\delta_1} = \bigcup_{j < \delta_1} \alpha_j$, $\beta_{\delta_1} = \bigcup_{j < \delta_1} \beta_j$, and $\calt^{\beta_{\delta_1}} = \bigcup_{j<\beta_{\delta_1}} \calt^j$. This completes the construction.

    \textbf{This is enough:} We will show that the model $M^{\beta_{\delta_1}}_{(\delta_2, 0, 0)}$ is both a $(\lambda, \delta_1)$-limit model over $M$ and a $(\lambda, \delta_2)$-limit model over $M$.

    First note that $\langle M^{\beta_j}_{(\delta_2, 0, 0)} : j \leq \delta_1 \rangle$ is a $\lek^u$ increasing sequence continuous at $\delta_1$, as the sequence $\langle \calt^{\beta_j} : j \leq \delta_1 \rangle$ is $\lesst$-increasing and continuous at $\delta_1$. So $M^{\beta_{\delta_1}}_{(\delta_2, 0, 0)}$ is a $(\lambda, \delta_1)$-limit model over $M^0_{(\delta_2, 0, 0)}$, and therefore a $(\lambda, \delta_1)$-limit model over $M$ as $M \lek M^0_{(\delta_2, 0, 0)}$.

    Next, we show that $\langle M^{\beta_{\delta_1}}_{(s, 0, 0)} : s \leq \delta_2 \rangle$ is also $\lek^u$-increasing and continuous at $(\delta_2, 0, 0)$. Note that as $\langle \calt^k : k < \beta_j\rangle$ is reduced for all $j<\delta_1$ and $\langle \calt^k : k \leq \beta_{\delta_1}\rangle$ is continuous at $\beta_{\delta_1}$, $\langle \calt^k : k \leq \beta_{\delta_1}\rangle$ is reduced by Lemma \ref{unions-of-reduced-towers-are-reduced}. Therefore, since $\cof_{(\delta_2 + 1) \times \lambda \times \alpha_{\delta_1}}(\delta_2, 0, 0) = \cof(\delta_2) \geq \kappa$, by Proposition \ref{reduced-towers-are continuous-at-high-cofinality}, $\calt^{\beta_{\delta_1}}$ is continuous at $(\delta_2, 0, 0)$. On the other hand, $\cof(\delta_1) \geq \kappa$ and $\calt^{\beta_{\delta_1}} = \bigcup_{k < \delta_1}\calt^{\beta_{k + 1}}$, so by Lemma \ref{long-unions-of-full-towers-are-full}, $\calt^{\delta_1}$ is $(\delta_2 + 1) \times \lambda \times \{0\}$-full. In particular, for all $a< \delta_2$ and $b < \lambda$, $M^{\beta_{\delta_1}}_{(a, b+1, 0)}$ realises all types over $M^{\beta_{\delta_1}}_{(a, b, 0)}$. So by Fact \ref{limit-models-exist}, for all $a < \delta_2$, $M^{\beta_{\delta_1}}_{(a+1, 0, 0)}$ is universal over $M^{\beta_{\delta_1}}_{(a, 0, 0)}$. Hence $M^{\beta_{\delta_1}}_{(\delta_2, 0, 0)} = \bigcup_{a < \delta_2} M^{\beta_{\delta_1}}_{(a, 0, 0)} $ is a $(\lambda, \delta_2)$-limit model over $M^{\beta_{\delta_1}}_{(0, 0, 0)}$, and therefore a $(\lambda, \delta_2)$-limit model over $M$ as $M \lek M^{\beta_{\delta_1}}_{(0, 0, 0)}$. So $M^{\beta_{\delta_1}}_{(\delta_2, 0, 0)}$ is as desired.

    The `moreover' statement now follows from Fact \ref{cofinality-isomorphisms}.
\end{proof}

\section{Applications}\label{applications-section}

In this section, we note a couple of easy applications of the main theorem. These are all known results, though in the case of Corollary \ref{bovan-main-corollary} particularly the proof was previously quite different.

Firstly, Theorem \ref{main-theorem} immediately gives us the main result from \cite{bovan}, as promised.

\begin{corollary}[{\cite[Theorem 1.2]{bovan}}]\label{bovan-main-corollary}
    Let $\K$ be an AEC stable in $\lambda \geq \LS(\K)$ with amalgamation, joint embedding, and no maximal models in $\K_\lambda$. Assume $\lambda$-non-splitting satisfies universal continuity,  $(\geq \kappa)$-local character for some $\kappa < \lambda^+$, and $(\lambda, \theta)$-symmetry for some regular $\theta \in [\kappa, \lambda^+)$.

    Then for any $M, N_1, N_2 \in \K_\lambda$ where $N_l$ is a $(\lambda, \geq\kappa)$-limit model over $M$ for $l = 1, 2$, $N_1 \underset{M}{\cong} N_2$. Moreover, for any $N_1, N_2 \in \K_\lambda$ both $(\lambda, \geq\kappa)$-limit models, $N_1 \cong N_2$.
\end{corollary}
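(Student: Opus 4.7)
The plan is to invoke Theorem \ref{main-theorem} with $\K' = \Kkappalims$ and $\dnf = \dnf_{\lambda-\splt} \upharpoonright \Kkappalims$, verifying each of its hypotheses from those of the corollary. The structural assumptions on $\K$, $\LS(\K)$, $\K_\lambda$-AP and the regularity of $\kappa$ transfer verbatim. By Fact \ref{non-splitting-properties}, $\dnf_{\lambda-\splt}$ satisfies weak uniqueness and weak extension, and these pass to the restriction; $(\geq \kappa)$-local character is a direct hypothesis; and $(\lambda, \theta)$-weak non-forking amalgamation follows from the assumed $(\lambda, \theta)$-symmetry via Lemma \ref{symmetry_implies_nfap}.

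The only nontrivial check is $\Kkappalims$-universal continuity* in $\K$. Given a $\lek$-increasing $\langle M_i : i < \delta \rangle$ in $\Kkappalims$, a common upper bound $N \in \Kkappalims$, and a $\subseteq$-increasing chain $\langle p_i : i < \delta \rangle$ of $\dnf$-non-forking types over $M_0$, I would first apply $(\geq \kappa)$-local character to $M_0$ to locate $M_0^- \lek^u M_0$ over which $p_0$ (hence every $p_i$ by base monotonicity) does not fork. Taking $\bar{N}$ to be a $(\lambda, \geq \kappa)$-limit model over $N$, a standard $\lambda$-AP amalgamation argument yields $M_0 \lek^u \bar{N}$ (so also $M_0^- \lek^u \bar{N}$). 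Weak extension then provides $\bar{p} \in \gS(\bar{N})$ extending $p_0$ and non-forking over $M_0^-$, and weak uniqueness (applied along $M_0^- \lek^u M_0 \lek M_i$) forces $\bar{p} \upharpoonright M_i = p_i$ for every $i$; so $p_\delta := \bar{p} \upharpoonright \bigcup_{i<\delta} M_i$ is the desired common extension. For uniqueness, given another $q_\delta$ extending every $p_i$, one realises it over $\bar{N}$ by amalgamation to obtain $\bar{q} \in \gS(\bar{N})$ agreeing with $\bar{p}$ on each $M_i$; an interleaving argument inserting a $\lek^u$-chain between the $M_i$ inside a further extension of $\bar{N}$, together with the assumed universal continuity, shows $\bar{q}$ also does not fork over $M_0^-$, whereupon weak uniqueness gives $\bar{q} = \bar{p}$ and hence $q_\delta = p_\delta$.

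With the hypotheses verified, Theorem \ref{main-theorem} directly yields that any two $(\lambda, \delta_l)$-limit models over $M$ with $\cof(\delta_l) \geq \kappa$ ($l = 1, 2$) are isomorphic over $M$; since every $(\lambda, \geq \kappa)$-limit model is a $(\lambda, \delta)$-limit for some admissible $\delta < \lambda^+$, this gives the first conclusion, and the \textit{moreover} follows from the corresponding \textit{moreover} of Theorem \ref{main-theorem} using the assumed $\lambda$-JEP. The main obstacle is the uniqueness half of universal continuity*, since we have no a priori control over the forking behaviour of an arbitrary candidate extension $q_\delta$; the interleaving trick is a standard workaround but requires care in handling amalgamation when passing between $\lek$-chains and $\lek^u$-chains.
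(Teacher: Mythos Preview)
Your overall strategy matches the paper's exactly: verify the hypotheses of Theorem~\ref{main-theorem} (equivalently Hypothesis~\ref{main_hypothesis}) for $\lambda$-non-splitting restricted to $\Kkappalims$, then invoke the theorem. The paper's own proof is in fact terser than yours---it simply asserts that the assumed universal continuity suffices, without spelling out the passage to $\Kkappalims$-universal continuity* in $\K$.

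Your attempt at that passage, however, contains a genuine slip. You write that $p_0$ not forking over $M_0^-$ implies ``every $p_i$ by base monotonicity'' does not fork over $M_0^-$; but base monotonicity lets you \emph{enlarge} the base, not shrink it: from $p_i$ not forking over $M_0$ you cannot conclude it does not fork over $M_0^- \lek M_0$. Consequently your application of weak uniqueness along $M_0^- \lek^u M_0 \lek M_i$ fails, since you have not shown that $p_i$ is non-forking over $M_0^-$. Repairing this via weak transitivity would require a universal step \emph{above} $M_0$ inside $M_i$, which is not available for an arbitrary $\lek$-increasing (as opposed to $\lek^u$-increasing) sequence in $\Kkappalims$. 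A cleaner route is to observe that every time universal continuity* is invoked in the proof of Theorem~\ref{main-theorem} (Propositions~\ref{limit-tower-exts-exist} and~\ref{univ-tower-exts-exist-with-b}, Lemma~\ref{extensions-with-non-forking-element-at-the-bottom} as applied in Proposition~\ref{reduced-towers-are continuous-at-high-cofinality}), the chain in question is actually $\lek^u$-increasing, coming from the tower ordering or from a universal tower; so the reformulation in Remark~\ref{cty-star-justification-remark} shows ordinary universal continuity already suffices. This is presumably why the paper treats the step as routine.
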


\begin{proof}
    $\lambda$-non-splitting satisfies invariance, monotonicity, base monotonicity, weak uniqueness, and weak extension by Fact \ref{non-splitting-properties}. We have assumed universal continuity and $(\geq \kappa)$-local character, and $(\lambda, \theta)$-weak non-forking amalgamation follows from $(\lambda, \theta)$-symmetry and Lemma \ref{symmetry_implies_nfap}. So $\K$ with $\lambda$-non-splitting satisfies Hypothesis \ref{main_hypothesis}, and the statement now follows from Theorem \ref{main-theorem}.
\end{proof}

We may also reobtain the main theorem of \cite[\textsection 3]{bema} for free here. Recall the definition of non-forking amalgamation (Definition \ref{def-dnf-properties}(\ref{def-nfap})).

\begin{lemma}\label{strong-properties-give-weak-lemma}
	Let $\K$ be an AEC stable in $\lambda \geq \LS(\K)$ with amalgamation, joint embedding, and no maximal models in $\K_\lambda$. Assume $\dnf$ is an independence relation on an AC $\K'$ where $\Kkappalims \subseteq \K' \subseteq \K_\lambda$ satisfying uniqueness, extension, and non-forking amalgamation.
	
	Then $\dnf$ satisfies weak uniqueness and weak extension. Moreover, for all $\theta \in [\kappa, \lambda^+)$, $\dnf$ satisfies $(\lambda, \theta)$-weak non-forking amalgamation.
\end{lemma}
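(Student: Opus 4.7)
The plan is to observe that each of the three conclusions is a direct or very nearly direct consequence of the assumed strong properties, using the basic monotonicity axioms of an independence relation.

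First, I would dispatch weak uniqueness and weak extension. Given $M_0 \lek^u M \lek N$ and $p, q \in \gS(N)$ both $\dnf$-non-forking over $M_0$ with $p \upharpoonright M = q \upharpoonright M$, base monotonicity upgrades ``does not fork over $M_0$'' to ``does not fork over $M$'' for both $p$ and $q$, and then uniqueness applied at $M$ yields $p = q$. Weak extension is even simpler: its hypothesis $M_0 \lek^u M \lek N$ is a strengthening of the hypothesis for extension, so the same conclusion follows immediately from extension.

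Next, for $(\lambda, \theta)$-weak non-forking amalgamation, suppose $M$ is a $(\lambda, \theta)$-limit over $M_0$, $M \lek M_l$ and $a_l \in M_l$ with $\gtp(a_l/M, M_l)$ $\dnf$-non-forking over $M_0$ for $l = 1, 2$. Base monotonicity gives that $\gtp(a_l/M, M_l)$ $\dnf$-does not fork over $M$, which lets me apply non-forking amalgamation to produce $N \in \K$ and $f_l : M_l \to N$ fixing $M$ with $p_l := \gtp(f_l(a_l)/f_{3-l}[M_{3-l}], N)$ $\dnf$-non-forking over $M$. Since $f_l$ fixes $M$, invariance gives $p_l \upharpoonright M = \gtp(a_l/M, M_l)$, so $p_l \upharpoonright M$ $\dnf$-does not fork over $M_0$. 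To promote ``$p_l$ does not fork over $M$'' to ``$p_l$ does not fork over $M_0$'', I need transitivity, which is the one nontrivial ingredient.

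The transitivity step is the (mild) main obstacle, but it is standard from uniqueness and extension, by essentially the proof of Fact \ref{weak-uniq-and-ext-implies-trans}: given $M_0 \lek M \lek N$ and $r \in \gS(N)$ with $r \upharpoonright M$ non-forking over $M_0$ and $r$ non-forking over $M$, use extension to get $s \in \gS(N)$ extending $r \upharpoonright M$ with $s$ non-forking over $M_0$; base monotonicity makes $s$ non-forking over $M$; then $r, s$ are two non-forking extensions of the same type to $N$ over $M$, so uniqueness gives $r = s$, hence $r$ does not fork over $M_0$. Applying this to each $p_l$ finishes the proof. One minor bookkeeping point I would flag is that all the models involved (in particular $N$, after possibly enlarging it to land in $\Kkappalims$) must remain in $\K'$ so that $\dnf$ is defined on them; this is handled using $\Kkappalims \subseteq \K' \subseteq \K_\lambda$ and Lemma \ref{respects-types-and-embeddings-if-contains-all-high-limits} exactly as elsewhere in the paper.
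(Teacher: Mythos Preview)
Your proof is correct and follows essentially the same approach as the paper: both reduce weak uniqueness and weak extension immediately to their strong forms, and both derive $(\lambda,\theta)$-weak non-forking amalgamation by applying base monotonicity, then non-forking amalgamation over $M$, then transitivity (obtained from uniqueness plus extension) to push non-forking down to $M_0$. The only cosmetic difference is that the paper cites transitivity as a known fact rather than reproving it inline, and it deduces weak uniqueness from uniqueness at $M_0$ directly rather than via base monotonicity, but these are the same argument.
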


\begin{proof}
	Since uniqueness and extension are clearly stronger forms of weak uniqueness and weak extension respectively, it remains only to show that $\dnf$ satisfies weak $(\lambda, \theta)$-non-forking amalgamation for all regular $\theta \in [\kappa, \lambda^+)$.
	
	Take $\theta \in [\kappa, \lambda^+)$ regular. Suppose $M_0, M, M_1, M_2 \in \K_\lambda$ where $M$ is a $(\lambda, \theta)$-limit model over $M_0$, and $M \lek M_l$ and $a_l \in M_l$ for $l = 1, 2$ such that $\gtp(a_l /M, M_l)$ $\dnf$-does not fork over $M_0$. By monotonicity, $\gtp(a_l /M, M_l)$ $\dnf$-does not fork over $M$. So by non-forking amalgamation, there exist $N \in \K_\lambda$ and $f_l : M_l \rightarrow N$ fixing $M$ for $l = 1, 2$ such that $\gtp(f_l(a_l)/f_{3-l}[M_{3-l}], N)$ $\dnf$-does not fork over $M$ for $l = 1, 2$. Note that by invariance $\gtp(f_l(a_l)/M, N)$ $\dnf$-does not fork over $M_0$. Since transitivity follows from uniqueness and extension (see for example \cite[Fact 2.16]{bema}), we may apply transitivity to get that $\gtp(f_l(a_l)/f_{3-l}[M_{3-l}], N)$ $\dnf$-does not fork over $M_0$ for $l =1, 2$, as desired.
\end{proof}

\begin{corollary}\label{bema-weaker-corollary}
    Let $\K$ be an AEC stable in $\lambda \geq \LS(\K)$ with amalgamation, joint embedding, and no maximal models in $\K_\lambda$. Assume $\dnf$ is an independence relation on an AC $\K'$ where $\Kkappalims \subseteq \K' \subseteq \K_\lambda$ satisfying uniqueness, extension, $\Kkappalims$-universal continuity* in $\K$, $(\geq \kappa)$-local character for some $\kappa < \lambda^+$, and non-forking amalgamation.

    Then for any $M, N_1, N_2 \in \K_\lambda$ where $N_l$ is a $(\lambda, \geq\kappa)$-limit model over $M$ for $l = 1, 2$, $N_1 \underset{M}{\cong} N_2$. Moreover, for any $N_1, N_2 \in \K_\lambda$ both $(\lambda, \geq\kappa)$-limit models, $N_1 \cong N_2$.
\end{corollary}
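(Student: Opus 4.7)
The plan is to simply derive this from Theorem \ref{main-theorem} by verifying that all hypotheses of that theorem follow from the (stronger) hypotheses assumed here. The corollary is really just a repackaging of the main theorem: the assumptions of Corollary \ref{bema-weaker-corollary} differ from those of Theorem \ref{main-theorem} only in replacing uniqueness, extension, and non-forking amalgamation with their ``weak'' and $(\lambda, \theta)$-localised counterparts, and each weak version follows trivially from its strong counterpart.

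First I would check the straightforward implications. Uniqueness implies weak uniqueness and extension implies weak extension by inspection of Definition \ref{def-dnf-properties}, since the weak versions are restrictions of the strong versions to cases where $M_0 \lek^u M$. The stability and structural hypotheses on $\K_\lambda$ (amalgamation, $\LS(\K) \leq \lambda$, $\lambda$-stability) and the $\K'$ setup ($\Kkappalims \subseteq \K' \subseteq \K_\lambda$) transfer verbatim. The $\Kkappalims$-universal continuity* in $\K$ and $(\geq \kappa)$-local character assumptions also transfer verbatim.

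The main remaining hypothesis of Theorem \ref{main-theorem} to establish is $(\lambda, \theta)$-weak non-forking amalgamation in some regular $\theta \in [\kappa, \lambda^+)$. This is precisely the content of Lemma \ref{strong-properties-give-weak-lemma}, which shows that uniqueness, extension, and non-forking amalgamation together imply $(\lambda, \theta)$-weak non-forking amalgamation for every regular $\theta \in [\kappa, \lambda^+)$. Taking $\theta = \kappa$ itself (which is regular by assumption), we obtain the required property.

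Having verified all hypotheses of Theorem \ref{main-theorem} (equivalently, Hypothesis \ref{main_hypothesis}), both the base-fixing isomorphism conclusion and the moreover clause (using that JEP is assumed in $\K_\lambda$) follow directly. There is no real obstacle here; the work is entirely in the one-step reduction via Lemma \ref{strong-properties-give-weak-lemma}.
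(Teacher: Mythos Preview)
Your proposal is correct and follows essentially the same approach as the paper: invoke Lemma \ref{strong-properties-give-weak-lemma} to obtain weak uniqueness, weak extension, and $(\lambda, \kappa)$-weak non-forking amalgamation, then apply Theorem \ref{main-theorem}. The paper's proof is a two-line version of exactly this reduction.
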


\begin{proof}   
    By Lemma \ref{strong-properties-give-weak-lemma}, $\dnf$ satisfies weak uniqueness, weak extension, and $(\lambda, \kappa)$-weak non-forking amalgamation. Thus we may apply Theorem \ref{main-theorem}, which gives the desired result.
\end{proof}

\begin{remark}
	The version of non-forking amalgamation in \cite{bema} may appear stronger than Definition \ref{def-dnf-properties}(\ref{def-nfap}) (it does not assume $\gtp(a_l/M, M_l)$ $\dnf$-does not fork over $M$ for $l = 1, 2$), but they are equivalent under \cite[Hypothesis 3.7]{bema} when $\dnf$ is defined only on $\Kkappalims$ as in that case existence holds \cite[Lemma 3.10]{bema}.
\end{remark}

\begin{remark}
    This gives us all the `high limits are isomorphic' applications from \cite{bema}: first order stable theories (\cite[Lemma 6.11]{bema}); independence relations in the sense of \cite{lrv} (\cite[Lemma 6.1, Theorem 5.1]{bema}); tame AECs with monster models and symmetry (\cite[$\mathsection$ 3.4]{bema}, \cite[Corollary 3.46]{bema}); modules of rings with pure embeddings (\cite[$\mathsection$ 6.2]{bema}); and torsion Abelian groups (\cite[Lemma 6.14]{bema}). Note we have only proved the `high limits are isomorphic' part of these results - for those that fully categorise the spectrum of limit models, \cite[$\mathsection$ 4]{bema} shows how to prove `low limits are non-isomorphic' in those cases as well.
\end{remark}

\printbibliography

\end{document}